\patchcmd{\subsection}{-.5em}{.5em}{}{}
\newtheorem{theorem}{Theorem}[section]
\theoremstyle{plain}
\newtheorem{corollary}[theorem]{Corollary}
\newtheorem{definition}[theorem]{Definition}
\newtheorem{lemma}[theorem]{Lemma}
\newtheorem{proposition}[theorem]{Proposition}
\newtheorem{remark}[theorem]{Remark}
\numberwithin{equation}{section}
\theoremstyle{plain}
\newcommand{\diag}{\operatorname{diag}}
\newcommand{\Ad}{\operatorname{Ad}}
\newcommand{\tr}{\operatorname{tr}}
\newcommand{\hpn}{{\bf HP}^n}
\newcommand{\re}{\mathbb{R}}
\begin{document}
\title[Global bifurcation  for Yamabe type equations]{Global bifurcation techniques for Yamabe type equations on Riemannian manifolds}

\author{Alejandro Betancourt de la Parra}\thanks{The authors were supported by grant 220074 of Fondo Sectorial de Investigaci\'{o}n para la Educaci\'{o}n SEP-CONACYT}
\address{Centro de Investigaci\'{o}n en Matem\'{a}ticas, CIMAT, Calle Jalisco s/n, 36023 Guanajuato, Guanajuato, M\'{e}xico}
\email{alejandro.betancourt@cimat.mx}

\author{Jurgen Julio-Batalla}
\address{Centro de Investigaci\'{o}n en Matem\'{a}ticas, CIMAT, Calle Jalisco s/n, 36023 Guanajuato, Guanajuato, M\'{e}xico}
\email{jurgen.julio@cimat.mx}

\author{Jimmy Petean}
\address{Centro de Investigaci\'{o}n en Matem\'{a}ticas, CIMAT, Calle Jalisco s/n, 36023 Guanajuato, Guanajuato, M\'{e}xico}
\email{jimmy@cimat.mx}

\begin{abstract} We consider a closed Riemannian manifold $(M^n ,g)$ of dimension $n\geq 3$ 
and study positive solutions of the equation  $-\Delta_g u + \lambda u = \lambda u^q$, with $\lambda >0$, $q>1$.
If $M$ supports
a proper isoparametric function with focal varieties $M_1$, $M_2$ of dimension $d_1 \geq d_2 $
we show that for any $q<\frac{ n-d_2+2 }{n- d_2 -2}$  the number of
positive solutions of the equation $-\Delta_g u + \lambda u = \lambda u^q$ tends to $\infty$ as $\lambda \rightarrow +\infty$. 
We apply this result to prove multiplicity results for positive solutions of critical and supercritical equations. In particular
we prove multiplicity results for the Yamabe equation on Riemannian manifolds.

\end{abstract}
\maketitle

\section{Introduction}

On a closed Riemannian manifold $(M^n ,g)$ of dimension $n\geq 3$ we
consider the following {\it Yamabe type} equation
\begin{equation}\label{equation}
-\Delta_g u+ \lambda u= \lambda u^{q},
\end{equation}

\noindent
with $\lambda >0$ and $q>1$. Let $p_n=\frac{n+2}{n-2}$. Equation (\ref{equation}) is said to be critical if $q=p_n$, 
subcritical if $q<p_n $ and supercritical if $q>p_n$. Note that equation (\ref{equation}) is equivalent 
to the equation $-\Delta_g u + \lambda u = \mu u^q$, for any positive constant $\mu$. We pick to normalize the
equation in this way so that $u=1$ is a solution for any $\lambda$. It is also very common to write the equation
as $-\Delta_g u + \lambda u = u^q$, 
in this case the constant solution is $u = \lambda^{\frac{1}{q-1}}$. The equation is very well-known and has been extensively studied 
in the last decades, 
both on Euclidean space (see for instance \cite{ BenciCerami,BrezisNirenberg, NiWei}) and on  Riemannian manifolds \cite{Gidas, Yanyanli, MichelettiPistoia}.   The critical case appears in Riemannian geometry when trying to solve the problem of
finding metrics of constant scalar curvature in a given conformal class of metrics; what is known as the Yamabe problem. 
Namely, if a metric $h$ conformal to $g$ is expressed as $h=u^{p_n -1} g$ for a positive function $u$ on $M$, then the 
scalar curvature of $h$, $s_h$, is equal to a constant $\lambda \in {\bf R}$ if and only if $u$ solves 
the Yamabe equation:

\begin{equation}\label{YamabeEquation}
- a_n \Delta_g u+ s_g u= \lambda u^{p_n},
\end{equation}

\noindent
where $a_n = \frac{4(n-1)}{n-2}$. A fundamental result obtained in several steps by H. Yamabe \cite{Yamabe}, N. Trudinger
\cite{Trudinger}, T. Aubin \cite{Aubin} and R. Schoen \cite{Schoen0} states  that there is always at least one positive solution
of the Yamabe equation, for any closed Riemannian manifold. Since the Yamabe equation is conformally invariant,  
in order to understand the space of
solutions we can therefore assume that $s_g$ is constant (and positive. In the non-positive case (\ref{YamabeEquation})
only has the constant solution), and (\ref{YamabeEquation}) is of the form 
(\ref{equation}) with $q=p_n$ and $\lambda =\frac{s_g}{a_n}$.

\vspace{.5cm}

In this article we will assume that there is an isoparametric function $f:M \rightarrow \re$ and look for solutions of
equation (\ref{equation}) of the form $v=\varphi \circ f$, where $\varphi :\re \rightarrow \re_{>0}$. 
We will call functions $v$ of this form {\it $f$-invariant}. 
Recall that a   function $f:M\rightarrow[t_0,t_1]$  is called isoparametric if  $|\nabla f|^2=b(f)$, $\Delta f=a(f)$ for 
some smooth functions $a,b$. Isoparametric functions on general Riemannian manifolds were considered in
\cite{Wang}, following the classical work by E. Cartan \cite{Cartan}, B. Segre \cite{Segre}, T. Levi-Civita \cite{Civita}  in the case of space forms. 
It is known from the general theory of isoparametric functions  that the only zeros of the function  $b:[t_0,t_1]\rightarrow \mathbb{R}_{\geq 0}$ are $t_0$ and $t_1$. Moreover $M_1 = f^{-1} (t_0 )$ and
$M_2 = f^{-1} (t_1 )$ are smooth submanifolds and are called the {\it focal submanifolds}
of $f$. We denote by $d_i$ the dimension of $M_i$. If $d_1, d_2 \leq n-2$ we call $f$ a {\it proper} isoparametric function, as in \cite{Tang}.
We will assume that $f$ is proper. 
The most familiar case of isoparametric functions comes from cohomogeneity
one isometric actions. Assume that $G$ acts isometrically on $(M,g)$ with regular orbits of 
codimension one and that the orbit space is  an interval. If $f$ is a smooth function which is $G$-invariant and
its only critical points are the two singular orbits, then $f$ is isoparametric, and a function is $f$-invariant
if and only if it is $G$-invariant.  Note that in this situation the singular orbits have codimension at least 2, and therefore
the isoparametric function $f$ is proper.  But there are proper isoparametric functions  in
much more general situations than the one of cohomogeneity one isometric actions. For instance in \cite{QianTang} C. Qian and
Z. Tang proved that given a Morse-Bott function $f$ on a closed manifold $M$ (with appropriate conditions on its critical set) there is a
Riemannian metric $g$ on $M$ so that $f$ is isoparametric for $(M,g)$.  In \cite{Henry} this 
situation was considered when $(M,g)$ is the round sphere $(\mathbb{S}^n , g_0^n )$ and $q$ is subcritical, and multiplicity results
were obtained in this case for $f$-invariant solutions of equation (\ref{equation}).

\vspace{.5cm}

We consider the space $C_f^{2,\alpha}$ of $C^{2,\alpha}$ functions on $M$ which are $f$-invariant and
we consider equation (\ref{equation}) as an operator equation on $(u,\lambda ) \in C_f^{2,\alpha} \times (0,\infty )$. We study
solutions bifurcating from the family of trivial solutions $\lambda \mapsto (1, \lambda)$. One can see (see \cite{Henry}) that there
are infinite eigenvalues of $-\Delta_g$ which have non-trivial eigenfuncions in $C_f^{2,\alpha}$. 
Using the well-known theory of local 
bifurcation for simple eigenvalues \cite{Crandall} we can easily see that:

\begin{theorem}\label{LocalBifurcationTheorem} Let $\mu_k$, $k\geq 1$, be the positive eigenvalues of
$-\Delta_g |_{C_f^{2,\alpha}}$. For any $q>1$ let $\lambda_k = \lambda_k (q) = \frac{\mu_k}{q-1}$. There are  branches
$t \mapsto (u(t), \lambda (t) )$, $t\in (-\varepsilon , \varepsilon )$, of $f$-invariant solutions of (\ref{equation}) so that
$\lambda (0) =\lambda_k$, $u(0)=1$ and $u(t) \neq 1$ if $t\neq 0$.
\end{theorem}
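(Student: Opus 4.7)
The plan is to cast the problem as a functional equation amenable to the Crandall--Rabinowitz theorem on bifurcation from a simple eigenvalue. Define
\[
F:C_f^{2,\alpha}\times(0,\infty)\longrightarrow C_f^{0,\alpha},\qquad F(u,\lambda)=-\Delta_g u+\lambda u-\lambda u^{q}.
\]
Since $C_f^{2,\alpha}$ is preserved by $\Delta_g$ (because $\Delta_g(\varphi\circ f)=b(f)\varphi''(f)+a(f)\varphi'(f)$ is again $f$-invariant), $F$ is a well-defined real-analytic map and $F(1,\lambda)\equiv 0$ for every $\lambda>0$. The Fr\'echet derivative in $u$ at the trivial branch is $F_u(1,\lambda)v=-\Delta_g v-\lambda(q-1)v$, so $(1,\lambda)$ is a candidate bifurcation point precisely when $\lambda(q-1)$ is an eigenvalue of $-\Delta_g|_{C_f^{2,\alpha}}$, i.e.\ when $\lambda=\lambda_k=\mu_k/(q-1)$.

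Next, I would verify the three hypotheses of Crandall--Rabinowitz at each $(1,\lambda_k)$. Fredholmness of index zero of $F_u(1,\lambda_k)$ follows from ellipticity on the closed manifold $M$ together with formal self-adjointness (the restriction to $C_f^{2,\alpha}$ inherits these properties, since $f$-invariance commutes with $-\Delta_g$). Simplicity of $\mu_k$ as an eigenvalue in $C_f^{2,\alpha}$ is the delicate point and the main obstacle: writing an $f$-invariant eigenfunction as $v=\varphi\circ f$, the eigenvalue problem reduces on $[t_0,t_1]$ to the Sturm--Liouville ODE
\[
b(t)\varphi''(t)+a(t)\varphi'(t)+\mu\,\varphi(t)=0,
\]
which is a regular singular problem with the correct behaviour at the endpoints $t_0,t_1$ (zeros of $b$) forcing the Neumann-type boundary conditions that make $\varphi$ extend smoothly across the focal submanifolds. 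Standard Sturm--Liouville theory then yields a discrete sequence of simple eigenvalues $\mu_k$, each with a one-dimensional $f$-invariant eigenspace spanned by some $\varphi_k$.

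Finally, the transversality condition is direct: differentiating in $\lambda$ gives
\[
F_{u\lambda}(1,\lambda_k)\varphi_k=\bigl(1-q\bigr)\varphi_k,
\]
and since $q>1$ this is a nonzero multiple of $\varphi_k$, which by self-adjointness lies in $\ker F_u(1,\lambda_k)$ and hence outside its range $\bigl(\ker F_u(1,\lambda_k)\bigr)^{\perp}$. With all hypotheses verified, the Crandall--Rabinowitz theorem \cite{Crandall} produces, at each $\lambda_k$, a $C^1$ curve $t\mapsto(u(t),\lambda(t))$ of solutions of $F=0$ with $u(0)=1$, $\lambda(0)=\lambda_k$, $u(t)-1=t\varphi_k+o(t)$, and disjoint from the trivial branch for $t\neq 0$, which is exactly the statement of the theorem. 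Positivity of $u(t)$ for small $|t|$ is automatic by continuity from $u(0)=1$.
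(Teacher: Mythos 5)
Your proposal is correct and takes essentially the same approach as the paper: set up a Crandall--Rabinowitz framework, identify the bifurcation values as $\mu_k/(q-1)$ with $\mu_k$ the $f$-invariant eigenvalues, argue simplicity by reducing the eigenvalue problem to a second-order singular Sturm--Liouville ODE, and check transversality via self-adjointness. The only cosmetic difference is that you keep $f\in[t_0,t_1]$ as the independent variable (so the ODE reads $b\varphi''+a\varphi'+\mu\varphi=0$), whereas the paper changes variables to arc-length distance to the focal set, obtaining $\phi''+h\phi'+\mu\phi=0$ with $h$ the mean curvature of the level tubes, and proves one-dimensionality of the kernel by uniqueness for the singular initial-value problem at the focal endpoint; the two routes are interchangeable.
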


Next we study the global behavior of the local branches appearing at the bifurcation points $(1,\lambda_k )$. We will apply the global bifurcation theorem of
P. Rabinowitz \cite{Rabinowitz}. To do so we will need to impose conditions on $q$. Recall that  $d_i$ is the dimension of the focal submanifold $M_i$ of the proper isoparametric function  and let $d=\min \{d_1 , d_2 \} \leq n-2$. Then we let
$p_f = \frac{n-d +2}{n-d-2}$, $p_f = \infty$ in case $d=n-2$. Note that if $d>0$ then $p_f > p_n$. For the
next results we will ask that $q< p_f$. If $d>0$ the results apply then to supercritical equations. 

An interesting  question that was raised for instance in \cite{Veron, Brezis, Licois} is to find conditions under which for $\lambda $ small the only
solution of (\ref{equation}) is the trivial solution $u=1$.
In fact, in the subcritical case J. R. Licois and L. Veron proved in \cite{Licois} that there exists some positive $\lambda_0=\lambda_0(M,g,q)$ for which the equation (\ref{equation}) admits only the constant solution for all $0<\lambda<\lambda_0$.  We will prove a similar result, restricting to $f$-invariant solutions but allowing $q$ to
be supercritical:

\begin{theorem}\label{SmallLambda} If $q<p_f$ there exists $\lambda_0 >0$ such that if $\lambda \in (0, \lambda_0 )$ and
$u$ is a positive $f$-invariant solution of (\ref{equation}) then $u=1$.
\end{theorem}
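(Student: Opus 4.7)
The plan is to argue by contradiction: assume there is a sequence $\lambda_k \to 0^+$ together with positive $f$-invariant solutions $u_k$ of (\ref{equation}) with $u_k\not\equiv 1$, and derive a contradiction. The two main ingredients are a uniform a priori $L^\infty$ bound on $\{u_k\}$ and a linearization argument near the trivial branch $u\equiv 1$.

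For the a priori bound I would first integrate (\ref{equation}) over $M$ to obtain $\int_M u_k^q\,dV_g=\int_M u_k\,dV_g$, whence $\|u_k\|_{L^q}\le\mathrm{vol}(M,g)^{1/q}$ by H\"older. To upgrade this to a uniform $L^\infty$ bound I would run a standard blow-up argument exploiting the $f$-invariance. Assume $A_k:=\max_M u_k\to\infty$; pick maximizers $x_k$ and rescale $v_k(y):=u_k(\exp_{x_k}(\varepsilon_k y))/A_k$ with $\varepsilon_k:=(\lambda_k A_k^{q-1})^{-1/2}$, chosen so that the rescaled equation reads $-\Delta_{g_k}v_k = v_k^q+o(1)v_k$. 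Because each $u_k$ is constant along the level sets of $f$, $v_k$ is constant along the tangent directions to the level set through $x_k$, so after passing to a subsequence one obtains a positive bounded entire solution $v$ of $-\Delta v=v^q$ on $\re^m$ with $v(0)=1$, where $m=n-d_\infty$ and $d_\infty$ is the dimension of the limiting level set. Since $d_\infty\in\{n-1,d_1,d_2\}$, one has $m\in\{1,n-d_1,n-d\}$ and in particular $m\le n-d$, so the critical exponent $p_m$ of $\re^m$ satisfies $p_m\ge p_f$. The hypothesis $q<p_f$ thus gives $q<p_m$, and the Gidas--Spruck Liouville theorem rules out $v$.

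With the uniform $L^\infty$ bound in hand, elliptic regularity bounds $\{u_k\}$ in $C^{2,\alpha}$ and a subsequence converges to some positive $u_\infty$ satisfying $-\Delta_g u_\infty=0$; hence $u_\infty\equiv c>0$, and passing to the limit in the identity $\int u_k^q=\int u_k$ forces $c=1$. Set $w_k:=u_k-1$ and $\varepsilon_k:=\|w_k\|_\infty\to 0$, and renormalize $\tilde w_k:=w_k/\varepsilon_k$, so that $\|\tilde w_k\|_\infty=1$. Subtracting the trivial solution from (\ref{equation}) gives
\begin{equation*}
-\Delta_g w_k=\lambda_k\bigl[(1+w_k)^q-(1+w_k)\bigr]=\lambda_k(q-1)w_k+O(\lambda_k w_k^2),
\end{equation*}
so dividing by $\varepsilon_k$ and using $\lambda_k\to 0$ yields $-\Delta_g\tilde w_k\to 0$ in $L^\infty$. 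By elliptic regularity a subsequence converges in $C^{1,\alpha'}$ to a harmonic limit $\tilde w$, which is therefore constant with $\|\tilde w\|_\infty=1$. Expanding $(1+w_k)^q-(1+w_k)=(q-1)w_k+\tfrac{q(q-1)}{2}w_k^2+O(w_k^3)$ and integrating on $M$ gives $(q-1)\int_M w_k+\tfrac{q(q-1)}{2}\int_M w_k^2+O(\varepsilon_k^3)=0$; dividing by $\varepsilon_k$ and letting $k\to\infty$ forces $\int_M\tilde w=0$, so the constant $\tilde w$ vanishes, contradicting $\|\tilde w\|_\infty=1$.

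The main obstacle is the a priori $L^\infty$ bound. The $f$-invariance is essential here: it forces the blow-up profile to depend only on coordinates transverse to the focal submanifold, reducing the relevant Liouville problem from dimension $n$ to the worst-case dimension $n-d$. This dimensional reduction is precisely why $p_f=(n-d+2)/(n-d-2)$ is the correct subcriticality threshold for the theorem.
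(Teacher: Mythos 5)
Your plan is genuinely different from the paper's proof, though it follows the same two-stage skeleton (a priori $L^\infty$ bound, then uniqueness of the trivial solution). For the a priori bound, the paper reduces to the ODE (\ref{ODE2}), rescales, and invokes the Haraux--Weissler oscillation result for the singular limit ODE $w''+\frac{H(0)}{t}w'+w^q=0$; you blow up directly in the manifold, use $f$-invariance to reduce the limit dimension to $m\le n-d$, and invoke Gidas--Spruck on $\re^m$. These are morally equivalent (both boil down to non-existence of positive radial entire solutions of $-\Delta v=v^q$ in dimension $n-d$), but yours avoids passing through the ODE formulation. For the uniqueness step, the paper cites Theorem~2.2 of Licois--V\'eron; your linearization around $u\equiv 1$ using the integrated identity $\int_M u^q=\int_M u$ is a clean, self-contained replacement, and it is correct as written.

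There is one genuine gap in the a priori bound, and it is precisely the set of cases the paper is careful to separate out in Lemma~5.2. Your rescaling $v_k(y)=u_k(\exp_{x_k}(\varepsilon_k y))/A_k$ with $\varepsilon_k=(\lambda_k A_k^{q-1})^{-1/2}$ only flattens the manifold and yields an entire limit problem on $\re^m$ if $\varepsilon_k\to 0$, i.e.\ if $\lambda_k A_k^{q-1}\to\infty$. Since $\lambda_k\to 0$, this is not automatic even when $A_k\to\infty$: the product $\lambda_k A_k^{q-1}$ could stay bounded or tend to zero. The paper's proof of Lemma~5.2 treats exactly these regimes as separate cases (there $\max_M w_m=c_m\lambda_m^{1/(q-1)}$ and one distinguishes $\max w_m\to 0$, $\to a\in(0,\infty)$, $\to\infty$), and only the last corresponds to your blow-up. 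You would need to add an argument for $\lambda_k A_k^{q-1}$ bounded. Fortunately this is easy with tools you already set up: write $v_k=u_k/A_k$, so $-\Delta_g v_k=(\lambda_k A_k^{q-1})v_k^q-\lambda_k v_k$ with $\|v_k\|_\infty=1$; by elliptic estimates $v_k\to v$ on $M$ solving $-\Delta_g v=\mu v^q$ with $\mu=\lim\lambda_k A_k^{q-1}\ge 0$. If $\mu>0$, integrating over the closed manifold forces $v\equiv 0$, contradicting $\|v\|_\infty=1$. If $\mu=0$ then $v\equiv 1$, whence $\int u_k^q/\int u_k\sim A_k^{q-1}\to\infty$, contradicting $\int_M u_k^q=\int_M u_k$. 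With this addition the a priori bound, and hence your whole argument, goes through.

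One small remark on the blow-up near a focal submanifold: when $x_k$ lies at distance $O(\varepsilon_k)$ from $M_1$ (say), the rescaled profile is a bounded positive solution of $-\Delta v=v^q$ on $\re^{n-d_1}$ that need not be centered at the origin, but Gidas--Spruck rules out any such solution, so this does not cause trouble. The dimensional count $m\in\{1,\,n-d_1,\,n-d_2\}$ and the monotonicity of $p_m$ in $m$ give exactly $p_m\ge p_{n-d}=p_f>q$ as you claim, matching the threshold in the statement.
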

 
Theorem \ref{LocalBifurcationTheorem} says that at each bifurcation point $(1,\lambda_k )$ appears a branch $B_k$ of 
nontrivial solutions. Explicitly, we let $B_k$ be the connected component containing the  nontrivial solutions appearing close
to $(1, \lambda_k )$, in the space of nontrivial solutions of  (\ref{equation}). Theorem \ref{SmallLambda} says in particular that if $(u,\lambda ) \in B_k$ then $\lambda \in [\lambda_0 , \infty )$.
This will allow us to apply
the global bifurcation theorem to prove:

\begin{theorem}\label{GlobalBifurcationTheorem}Let $(M^n ,g)$ be a closed Riemannian manifold of dimension $n\geq 3$,
and  $f:M \rightarrow \re$ be a proper isoparametric function. For any $q \in (1, p_f  )$ let
$\lambda_k (q)= \frac{\mu_k}{q-1}$, as in Theorem (\ref{LocalBifurcationTheorem}). Then 
for any positive integer $k$  and for any $\lambda \in
(\lambda_k , \lambda_{k+1} ]$ equation (\ref{equation}) has at least $k$ different positive 
$f$-invariant solutions. 

\end{theorem}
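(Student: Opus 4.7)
The plan is to apply the Rabinowitz global bifurcation theorem \cite{Rabinowitz} to each branch $B_k$ produced by Theorem \ref{LocalBifurcationTheorem} and then combine the resulting information with Theorem \ref{SmallLambda} and an a priori bound coming from the hypothesis $q<p_f$.

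First, I would recast (\ref{equation}) as a fixed-point problem on $C_f^{2,\alpha}$: setting $T_\lambda(u) = (-\Delta_g+\lambda)^{-1}(\lambda |u|^{q-1}u)$ gives a smooth family of compact perturbations of the identity, since the resolvent is compact from $C_f^{0,\alpha}$ into $C_f^{2,\alpha}$ and the Nemytskii map $u\mapsto |u|^{q-1}u$ is smooth on $C_f^{2,\alpha}$ for $q>1$. A standard maximum-principle argument on the modified equation shows that any fixed point with $u\not\equiv 0$ arising near the trivial branch $u\equiv 1$ is positive, so it solves the original equation (\ref{equation}).

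Let $B_k$ denote the connected component of nontrivial fixed points of this operator equation that contains the local branch from Theorem \ref{LocalBifurcationTheorem}. The Rabinowitz alternative then states that either $B_k$ is unbounded in $C_f^{2,\alpha}\times(0,\infty)$, or $\overline{B_k}$ contains a second trivial point $(1,\lambda_j)$ with $j\neq k$. To rule out the second alternative and to show that distinct branches are disjoint, I would use a nodal argument after reducing to an ODE. Writing $u=\varphi\circ f$ and using $\Delta_g(\varphi\circ f) = b(f)\varphi''(f) + a(f)\varphi'(f)$, the equation becomes a Sturm-Liouville type ODE for $\varphi$ on $[t_0,t_1]$ with regularity conditions at the focal points. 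Its eigenvalues $\mu_k$ are simple and the $k$-th eigenfunction $\psi_k$ has exactly $k-1$ zeros in $(t_0,t_1)$, so the bifurcating solutions satisfy $\varphi-1\approx t\psi_k$ near $(1,\lambda_k)$, giving $k-1$ simple interior zeros. Setting $w=\varphi-1$ yields an ODE with smooth right hand side depending on $(t,w,w')$, so Picard–Lindelöf uniqueness forbids $w$ from having a double interior zero unless $w\equiv 0$. Hence the nodal count $k-1$ is preserved along $B_k$, so $\overline{B_k}$ cannot contain $(1,\lambda_j)$ for $j\neq k$, and the branches $B_1,\dots,B_k$ are pairwise disjoint.

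Finally, Theorem \ref{SmallLambda} forces every $(u,\lambda)\in B_k$ to satisfy $\lambda\geq\lambda_0>0$, while the hypothesis $q<p_f$ is exactly subcriticality in the effective dimension $n-d$ associated to the reduced ODE; standard blow-up or variational arguments on this ODE then provide a uniform $C^{2,\alpha}$ bound on positive $f$-invariant solutions for $\lambda$ in any compact subset of $(0,\infty)$. These two facts together force the $\lambda$-projection of the unbounded continuum $B_k$ to be unbounded above, and by connectedness this projection is an interval containing $\lambda_k$ and extending to $+\infty$, hence containing all of $(\lambda_k,\infty)$. For any $\lambda\in(\lambda_k,\lambda_{k+1}]$, each of the $k$ pairwise disjoint branches $B_1,\dots,B_k$ therefore contributes a distinct positive $f$-invariant solution, yielding the required $k$ solutions. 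The main obstacle I expect is establishing the a priori bound: one has to handle the degenerate weights $b,a$ at the focal points $t_0,t_1$ in the reduced ODE and translate the abstract condition $q<p_f$ into a uniform $L^\infty$ bound on positive $f$-invariant solutions over compact intervals of $\lambda$.
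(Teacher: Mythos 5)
Your proposal follows essentially the same architecture as the paper's proof: reduce to a singular ODE on a closed interval, apply Rabinowitz's global alternative to the branch $B_k$, rule out the "meets another trivial point" alternative by a nodal-count argument, bound $\lambda$ away from $0$ via Theorem \ref{SmallLambda}, and conclude from an a priori bound that the $\lambda$-projection of $B_k$ is unbounded above. The main difference is cosmetic: you write the reduced ODE as $b(f)\varphi'' + a(f)\varphi'$ directly, while the paper reparametrizes by the arclength ${\bf d}$ to get $\phi'' + h\phi'$ with $h$ the mean curvature of the tube; this second form makes the singular asymptotics $h(t)\sim (n-d_i-1)/t$ near the focal varieties transparent, which is exactly what the blow-up argument needs.

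There are two places where you assert more than you prove. First, you invoke "the $k$-th eigenfunction has exactly $k-1$ zeros, as in classical Sturm--Liouville theory"; but the reduced eigenvalue problem here is a \emph{singular} Sturm--Liouville problem (the coefficient $h$ behaves like $c/t$ at $0$ and like $c/(t^*-t)$ at $t^*$), and the standard nodal counting theorem does not apply off the shelf. The paper circumvents this by proving only the weaker statement that the number of zeros $n_k$ of $u_k$ is strictly increasing in $k$ (via Sturm comparison), which is already enough to separate the branches and rule out the second Rabinowitz alternative; your argument would be cleaner if you did the same. Second, and more seriously, your a priori $L^\infty$ bound (needed to show that any $\lambda$-bounded piece of $B_k$ is precompact, hence that $B_k$ must be unbounded in $\lambda$) is dispatched with the phrase "standard blow-up or variational arguments on this ODE." You correctly identify this as the hard step. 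The paper devotes all of Section 4 to it (Proposition 4.1 and Lemma 4.7): one rescales a blowing-up sequence $\phi_i(0)\to\infty$ to $w_i(t)=\alpha_i^{2/(1-q)}\phi_i(t/(\alpha_i\sqrt{\lambda_i}))$, shows $w_i$ converges locally uniformly to a solution of $w''+\tfrac{H(0)}{t}w'+w^q=0$ with $H(0)=n-d-1$, and then uses the result of Haraux--Weissler that for $q<p_f$ this limit equation has an oscillatory solution with $w(0)=1$, $w'(0)=0$; the resulting zero transfers back to $\phi_i$ and contradicts positivity. The hypothesis $q<p_f$ enters precisely at this point, and without carrying out this argument your proof has a genuine gap: "subcriticality in the effective dimension $n-d$" by itself does not yield the bound, since what must be controlled is the behavior of the ODE at the singular endpoints rather than some global variational quantity on $M$.
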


The theorem is proved by showing that the branches $B_k$ 
are disjoint to each other and  for any $\lambda > \lambda_k$ there exists a solution $(u,\lambda) \in B_k$. We will prove in Section 4
(Corollary 4.6) that  for any fixed $\lambda >0$ only a finite number of the branches cut the ``vertical'' line $C_f^{2,\alpha} \times
\{ \lambda \}$. This implies that for any $K>0$ there exists $k_0$ such that if $k\geq k_0$ the branch $B_k \subset C_f^{2,\alpha}
\times [K,\infty )$.

\vspace{.5cm}

Consider the isometric  $O(n)$-action on the curvature one metric on the sphere, $(\mathbb{ S}^n,g_0)$, fixing an axis.  A linear 
function invariant by the action gives a proper  isoparametric function. In this case $d=0$ 
(the fixed points of the action are two points) and Theorem \ref{GlobalBifurcationTheorem}
applies to the subcritical case $q < p_n$. In this case the Theorem  \ref{GlobalBifurcationTheorem} was proved by 
Q. Jin, YY. Li and H. Xu in \cite{Yanyanli}  (note that in this case the invariant functions are precisely the  radial functions,
with respect to the invariant  axis). In this case Theorem \ref{SmallLambda} was proved by M-F. Bidaut-Veron and L. Veron in
\cite{Veron}, and the constant $\lambda_0$ is explicit: $\lambda_0 = \frac{n}{q-1}$. In \cite{Henry} the authors  considered the case 
of any isoparametric function on the sphere, but again only for the subcritical case. 

\vspace{.5cm}

We will now consider some applications of Theorem \ref{GlobalBifurcationTheorem}. 
The simplest situation to apply Theorem \ref{GlobalBifurcationTheorem}, beyond the case of radial functions mentioned
above, is to consider an isoparametric function $f$ on $(\mathbb{ S}^3, g_0 )$  invariant by the natural isometric cohomogeneity one action of $\mathbb{S}^1 \times \mathbb{S}^1$. Both singular orbits have dimension 1, so $f$ is proper and $p_f = \infty$. Moreover the values of $\lambda_k$ in Theorem \ref{LocalBifurcationTheorem} and Theorem \ref{GlobalBifurcationTheorem} are easily seen to be $\lambda_k = \frac{\mu_{2k}}{q-1}$, where $\mu_k$ are the eigenvalues of $-\Delta_{g_0}$ (it is well-known that $\mu_k = k(k+2)$). Therefore Theorem  \ref{GlobalBifurcationTheorem} says

\begin{corollary} For any $q>1$ the equation (\ref{equation}) on $(\mathbb{ S}^3, g_0 )$ has at least 
$k$ positive different torus invariant solutions if $\lambda \in \left( \frac{4k(k+1)}{q-1} , \frac{4(k+1)(k+2)}{q-1} \right]$.
\end{corollary}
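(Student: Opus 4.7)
The plan is to specialize Theorem \ref{GlobalBifurcationTheorem} to the concrete setup on $(\mathbb{S}^3,g_0)$. Realizing $\mathbb{S}^3\subset \mathbb{C}^2$ and letting $T^2=\mathbb{S}^1\times \mathbb{S}^1$ act by $(e^{i\theta_1},e^{i\theta_2})\cdot (z_1,z_2)=(e^{i\theta_1}z_1,e^{i\theta_2}z_2)$, the two singular orbits are the circles $M_1=\{z_2=0\}$ and $M_2=\{z_1=0\}$, so $d_1=d_2=1$. The polynomial $f(z_1,z_2)=|z_1|^2$ is $T^2$-invariant, takes values in $[0,1]$, and has critical set exactly $M_1\cup M_2$; relating the Euclidean and spherical Laplacians of its homogeneous extension to $\mathbb{R}^4$ gives $|\nabla f|^2=4f(1-f)$ and $\Delta_{g_0} f = 4-8f$ on $\mathbb{S}^3$, so $f$ is a proper isoparametric function and the $f$-invariant functions are precisely the $T^2$-invariant ones. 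Since $d=\min\{d_1,d_2\}=1$ and $n=3$ we have $p_f=\infty$, and Theorem \ref{GlobalBifurcationTheorem} applies for every $q>1$.

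Next I would pin down the bifurcation values by determining which Laplace eigenvalues admit a $T^2$-invariant eigenfunction. The $\mu_k$-eigenspace of $-\Delta_{g_0}$ on $\mathbb{S}^3$ is the restriction of the space $\mathcal{H}_k$ of harmonic homogeneous polynomials of degree $k$ on $\mathbb{R}^4\cong \mathbb{C}^2$. A $T^2$-invariant polynomial is necessarily a polynomial in $|z_1|^2$ and $|z_2|^2$, hence of even total degree; this discards every odd $k$. For even $k=2j$ the space $\mathcal{P}_{2j}^{T^2}$ of $T^2$-invariant homogeneous polynomials of degree $2j$ has dimension $j+1$, spanned by the monomials $|z_1|^{2a}|z_2|^{2(j-a)}$, $0\le a\le j$. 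Combining this with the standard decomposition $\mathcal{P}_{2j}=\mathcal{H}_{2j}\oplus r^2\mathcal{P}_{2j-2}$ (which restricts to the $T^2$-invariants) yields $\dim \mathcal{H}_{2j}^{T^2}=(j+1)-j=1$ for every $j\geq 0$. Consequently the positive $T^2$-invariant eigenvalues of $-\Delta_{g_0}$ are exactly $\mu_{2k}=2k(2k+2)=4k(k+1)$ with $k\geq 1$, and the bifurcation values of Theorem \ref{LocalBifurcationTheorem} become $\lambda_k=\frac{4k(k+1)}{q-1}$.

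Plugging these explicit values into Theorem \ref{GlobalBifurcationTheorem} produces at least $k$ distinct positive $T^2$-invariant solutions of (\ref{equation}) for every $\lambda\in \bigl(\tfrac{4k(k+1)}{q-1},\tfrac{4(k+1)(k+2)}{q-1}\bigr]$, which is precisely the claim of the corollary. The only step requiring genuine content is the second one, the identification of the $T^2$-invariant spectrum on $\mathbb{S}^3$; the rest is a formal check of the hypotheses of the global bifurcation theorem.
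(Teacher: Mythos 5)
Your argument is correct, and for the one substantive step — identifying the $T^2$-invariant spectrum of $-\Delta_{g_0}$ on $\mathbb{S}^3$ — you take a different and more self-contained route than the paper. The paper (see its Section 7) passes through the Cartan--M\"{u}nzner framework: it replaces $f=|z_1|^2$ by the equivalent Cartan--M\"{u}nzner polynomial $F=|z_1|^2-|z_2|^2$ of degree $2$, and then invokes a general result (attributed to \cite[Lemma 3.4]{Henry}, proved by analyzing the resulting linear ODE on $[0,t^*]$) to conclude that the $F$-invariant eigenvalues of the round Laplacian are exactly $\mu_{mi}$ with $m=\deg F$. Your approach instead computes the invariant eigenspaces directly by representation-theoretic bookkeeping: you observe that $T^2$-invariant polynomials on $\mathbb{C}^2$ must be even-degree polynomials in $|z_1|^2,|z_2|^2$, so $\dim\mathcal{P}_{2j}^{T^2}=j+1$, and then use the $O(4)$-equivariant (hence $T^2$-equivariant) splitting $\mathcal{P}_{2j}=\mathcal{H}_{2j}\oplus r^2\mathcal{P}_{2j-2}$ to conclude $\dim\mathcal{H}_{2j}^{T^2}=1$ and $\dim\mathcal{H}_{2j+1}^{T^2}=0$. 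Both methods correctly yield that the positive $T^2$-invariant eigenvalues are $\mu_{2k}=2k(2k+2)=4k(k+1)$, and the rest of your argument (verifying $f$ is a proper isoparametric function with $d_1=d_2=1$, $p_f=\infty$, and then plugging $\lambda_k=\mu_{2k}/(q-1)$ into Theorem \ref{GlobalBifurcationTheorem}) matches the paper's. The trade-off is that the paper's method scales to the degree-$4$ and inhomogeneous examples in its Section 7 without change, whereas your dimension count is more elementary and transparent but is tailored to the torus case on $\mathbb{S}^3$; for this corollary either is fully adequate.
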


Note that as $q\rightarrow \infty$ we obtain solutions with $\lambda$ very close to 0.

In $(\mathbb{ S}^4, g_0^4 )$ one could
consider an isoparametric function $f$ invariant by the isometric cohomogeneity one action of $O(3) \times O(2)$. In this
case the singular orbits have dimension 1 and 2, respectively.  Then $f$ is proper and $p_f = 5$ (note that 
$p_4 =3$). One can easily see also that the eigenvalues of $-\Delta_{g_0}$
restricted to  $O(3) \times O(2)$-invariant functions are $\mu_{2k}$, where $\mu_k = k(k+3)$ are the 
eigenvalues of $-\Delta_{g_0}$. Therefore we obtain:

\begin{corollary} For any $q\in (1,5)$ the equation (\ref{equation}) on $(\mathbb{ S}^4, g_0 )$ has at least 
$k$ positive different $O(3) \times O(2)$-invariant solutions if $\lambda \in \left( \frac{2k(2k+3)}{q-1} , \frac{2(k+1)(2k+5)}{q-1} 
\right]$.
\end{corollary}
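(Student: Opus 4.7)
The strategy is to verify the hypotheses of Theorem \ref{GlobalBifurcationTheorem} in this specific geometric setting and then read off the conclusion. Write $\mathbb{S}^4 \subset \re^5 = \re^3 \oplus \re^2$ and let $O(3)\times O(2)$ act as the product of the two standard representations restricted to the unit sphere. This action is cohomogeneity one with orbit space an interval, so an invariant isoparametric function $f$ can be taken as $f(x) = |x_{\re^3}|^2$ (or a smooth reparametrization). Its two singular orbits are $\mathbb{S}^2 \subset \re^3 \oplus \{0\}$ and $\mathbb{S}^1 \subset \{0\}\oplus \re^2$, of dimensions $d_1=2$ and $d_2=1$; both are $\leq n-2 = 2$, so $f$ is proper, and $d=\min\{d_1,d_2\}=1$ yields $p_f = \frac{4-1+2}{4-1-2}=5$, justifying the range $q\in(1,5)$.

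The next step is to identify which eigenvalues of $-\Delta_{g_0}$ on $\mathbb{S}^4$ arise on $C_f^{2,\alpha}$. Since the $k$-th eigenspace of $-\Delta_{g_0}$ is the restriction to $\mathbb{S}^4$ of the space of harmonic homogeneous polynomials of degree $k$ on $\re^5$, I need to pick out the $O(3)\times O(2)$-invariants. The ring of polynomial invariants on $\re^3\oplus\re^2$ is generated by $r_1^2 = |x_{\re^3}|^2$ and $r_2^2 = |x_{\re^2}|^2$, so every such polynomial has even total degree; consequently only even indices $k=2\ell$ can contribute. Conversely, in each even degree $2\ell$ the invariant polynomials modulo the relation $r_1^2+r_2^2=1$ form an $(\ell+1)$-dimensional space, and after reducing the eigenvalue equation to a Sturm–Liouville problem in $t=r_1^2$ on $[0,1]$ (a Jacobi operator with weights determined by $d_1=2$ and $d_2=1$) one finds exactly one invariant eigenfunction with eigenvalue $\mu_{2\ell} = 2\ell(2\ell+3)$. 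Thus the spectrum of $-\Delta_{g_0}|_{C_f^{2,\alpha}}$ is exactly $\{\mu_{2\ell}\}_{\ell\geq 0}$.

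Given this, Theorem \ref{GlobalBifurcationTheorem} directly applies with invariant bifurcation points
\[
\lambda_\ell(q) \;=\; \frac{\mu_{2\ell}}{q-1} \;=\; \frac{2\ell(2\ell+3)}{q-1},
\]
so that for any $\lambda \in (\lambda_k,\lambda_{k+1}]$ equation (\ref{equation}) has at least $k$ different positive $f$-invariant (equivalently, $O(3)\times O(2)$-invariant) solutions, which is precisely the statement.

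The main obstacle is the eigenvalue identification. The observation that only even-degree harmonics can be invariant is immediate from the ring-of-invariants computation, but checking that each even degree contributes exactly one invariant eigenfunction — so that one obtains a genuine sequence $\mu_{2\ell}$ without gaps — requires either the Sturm–Liouville ODE analysis on the orbit space or, equivalently, the $O(5)\downarrow O(3)\times O(2)$ branching rule for spherical harmonics. Everything else (properness, $p_f$, the arithmetic of $\lambda_\ell$) is a direct substitution into Theorem \ref{GlobalBifurcationTheorem}.
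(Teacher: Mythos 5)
Your argument is correct and arrives at the same conclusion as the paper, but the justification of the eigenvalue identification is organized a little differently. The paper (see Section~7) funnels all such examples on spheres through the Cartan--M\"unzner machinery: in the family of isoparametric functions with the given level sets there is a Cartan--M\"unzner polynomial $F$ of some degree $k$, and by \cite[Lemma 3.4]{Henry} the $F$-invariant eigenvalues are precisely $\lambda_{ki}$. For the $O(3)\times O(2)$-action one takes $F(x,y)=\|x\|^2-\|y\|^2$ of degree $2$ (your $f=|x_{\re^3}|^2$ is a reparametrization of this on the sphere), and the invariant eigenvalues are read off immediately as $\mu_{2\ell}$. You instead argue directly from invariant theory: the ring of $O(3)\times O(2)$-invariants on $\re^3\oplus\re^2$ is generated by $r_1^2$, $r_2^2$, so only even-degree harmonics can be invariant, and then a dimension count (equivalently the $O(5)\downarrow O(3)\times O(2)$ branching rule, or the Sturm--Liouville analysis on the orbit space) gives exactly one invariant harmonic in each even degree. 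Both routes ultimately reduce to the same one-dimensional ODE on the orbit interval, so they are close cousins; the paper's version is more uniform and immediately portable to the other sphere examples with Cartan--M\"unzner polynomials of higher degree, while yours is more elementary and self-contained for this particular action, making no appeal to M\"unzner's theorem or to the lemma in \cite{Henry}. The rest of the proposal (properness, $d=1$, $p_f=5$, the arithmetic of $\lambda_k(q)$, and the invocation of Theorem~\ref{GlobalBifurcationTheorem}) matches the paper.
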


Similar applications can be obtained given any isoparametric functions on a sphere. To apply Theorem \ref{GlobalBifurcationTheorem}
one always needs to compute the dimension of the focal submanifolds (which is usually simple) and the eigenvalues of the restricted
Laplacian. We will show how to do these computations explicitly in Section 7.

\vspace{.5cm}

Let us now consider some more general spaces. For instance one has isoparametric functions invariant by cohomogeneity
one actions on the complex projective spaces $({\bf CP}^n ,g_{FS} )$, where $g_{FS}$ is the Fubini-Study metric. The simplest is the action by $U(n)$ for which the singular orbits are 
a point and ${\bf CP}^{n-1}$. Theorem \ref{GlobalBifurcationTheorem} then gives solutions only in the subcritical case. But one
can consider other cohomogeneity one actions. For instance 
in the case of ${\bf CP}^2$ there is a natural cohomogeneity one action by $SO(3)$ (obtained by considering the inclusion 
$SO(3) \subset U(3)$). See for instance the discussion in \cite[Page 238]{Ziller2}, or in \cite[Section 3]{BettiolKrishnan}) which has singular orbits of
dimension 2: the real points ${\bf RP}^2 \subset {\bf CP}^2$ and $\{ [z_0 , z_1 , z_2 ] \in {\bf CP}^2 : z_0^2 + z_1^2 + z_2^2 =0 \}
=\mathbb{S}^2$. Then an invariant isoparametric function $f$ is proper  and $p_f = \infty$. It is well known that 
the eigenvalues of $-\Delta_{g_{FS}}$ are $\mu_k = 4k(k +2)$. 
One can see that the eigenvalues
of $-\Delta_{{\bf CP}^2}$ restricted to $SO(3)$-invariant functions are $\mu_{2k} = 16k(k+1)$: we will show how to prove this and similar eigenvalues computations in Section 7. Therefore Theorem \ref{GlobalBifurcationTheorem} says

\begin{corollary} For any $q>1$ the equation (\ref{equation}) on $({\bf CP}^2, g_{FS})$ has at least 
$k$ positive different $SO(3)$-invariant solutions if $\lambda \in \left( \frac{16k(k+1)}{q-1} , \frac{16(k+1)(k+2)}{q-1} \right]$.
\end{corollary}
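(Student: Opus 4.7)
The plan is to verify the hypotheses of Theorem \ref{GlobalBifurcationTheorem} for a cohomogeneity one $SO(3)$-invariant isoparametric function on $({\bf CP}^2, g_{FS})$ and then translate its conclusion into the claimed interval. The proof factors into three steps.

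First, I would record the geometry of the action. Using the inclusion $SO(3) \subset U(3)$ described in the paper, the induced action on ${\bf CP}^2$ is of cohomogeneity one with singular orbits ${\bf RP}^2$ (the real points) and the complex quadric $\{[z_0 : z_1 : z_2] : z_0^2 + z_1^2 + z_2^2 = 0\}\cong \mathbb{S}^2$, both of real dimension $2$. Any smooth $SO(3)$-invariant function $f$ whose only critical set equals this pair of singular orbits is then a proper isoparametric function on $({\bf CP}^2, g_{FS})$, with $d_1 = d_2 = 2 = n-2$, so $d = n-2$ and $p_f = \infty$; the $f$-invariant $C^{2,\alpha}$ functions are exactly the $SO(3)$-invariant ones. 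This removes any restriction on $q$ in Theorem \ref{GlobalBifurcationTheorem}.

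Second, I would identify the spectrum of $-\Delta_{g_{FS}}$ restricted to $SO(3)$-invariant functions. Taking for granted that the eigenvalues of $-\Delta_{g_{FS}}$ are $\mu_k = 4k(k+2)$, with the $\mu_k$-eigenspace carrying the well-known irreducible $U(3)$-representation sitting inside the space of harmonic polynomials of bidegree $(k,k)$ on $\mathbb{C}^3$, the task is to compute the $SO(3)$-fixed subspace in each of these. Via the branching rule $U(3) \supset SO(3)$ one checks that the $SO(3)$-fixed subspace of the $\mu_k$-eigenspace is nontrivial precisely when $k$ is even (and is then one-dimensional). Thus the positive eigenvalues of $-\Delta_{g_{FS}}|_{C_f^{2,\alpha}}$ are
\[
\mu_{2k} \;=\; 4(2k)(2k+2) \;=\; 16k(k+1), \qquad k\geq 1.
\]

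Third, I would feed this into Theorem \ref{GlobalBifurcationTheorem}. Since $p_f = \infty$ we may take any $q>1$; the bifurcation thresholds are $\lambda_k(q) = \mu_{2k}/(q-1) = 16k(k+1)/(q-1)$, and the theorem produces at least $k$ different positive $SO(3)$-invariant solutions of \eqref{equation} whenever
\[
\lambda \;\in\; (\lambda_k, \lambda_{k+1}] \;=\; \left( \tfrac{16k(k+1)}{q-1},\, \tfrac{16(k+1)(k+2)}{q-1} \right],
\]
which is the stated conclusion.

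The main obstacle is step two: the branching computation identifying exactly which eigenvalues of $-\Delta_{g_{FS}}$ admit $SO(3)$-invariant eigenfunctions is representation-theoretic rather than geometric, and it is here that the systematic machinery the authors defer to Section 7 is essential. Steps one and three are routine once the eigenvalue list is in hand.
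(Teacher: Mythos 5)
Your proposal reaches the right conclusion and steps one and three line up with the paper, but your step two takes a genuinely different route to the restricted spectrum. The paper's method (Section~7) is to lift the isoparametric function through the Hopf fibration $\mathbb{S}^{2n+1}\rightarrow {\bf CP}^n$, identify the Cartan--M\"unzner polynomial $F(x,y)=(\|x\|^2-\|y\|^2)^2+4\langle x,y\rangle^2$ (equivalently $|z\cdot z|^2$) of degree $4$ in the equivalence class, and invoke the general fact (Lemma~3.4 of \cite{Henry}) that the $F$-invariant eigenvalues on $\mathbb{S}^{2n+1}$ are exactly $\lambda_{4i}=4i(4i+2n)$; these project to the $f$-invariant eigenvalues on ${\bf CP}^n$, giving $16i(i+1)$ when $n=2$. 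You instead decompose the $\mu_k$-eigenspace as the irreducible $U(3)$-module $H^{k,k}$ and apply the branching rule $U(3)\supset SO(3)$ to see that its $SO(3)$-fixed subspace is one-dimensional for $k$ even and zero for $k$ odd. Both computations are correct and deliver the same list $\{16k(k+1)\}$. The trade-off: your branching argument is perhaps conceptually cleaner for this single example, but it must be redone from scratch for each new pair $(M,G)$ and becomes substantially harder for the quaternionic examples in Section~7; the paper's Cartan--M\"unzner route reduces every such computation to reading off the degree of a single polynomial, which is the uniform mechanism the authors want. Also note that, as you yourself flag, your step two is asserted rather than carried out --- you would still need to perform the branching computation (or equivalently check that the $SO(3)$-invariant harmonic polynomials of bidegree $(k,k)$ on $\mathbb{C}^3$ form a space of dimension $\lfloor k/2\rfloor-\lfloor (k-1)/2\rfloor$) to make the proof complete.
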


The example generalizes to higher dimensional complex projective space (see Corollary \ref{7} in section 7). 

There is also an action on $({\bf CP}^n ,g_{FS} )$
by $U(k) \times U(l)$ where $k+l = n+1$ and $k \geq l \geq 2$. We will see in section 7 that  for an 
invariant isoparametric function $f$ one has $p_f = \frac{2n-2l +4}{2n-2l}>p_{2n}$ and that the $f$-invariant 
eigenvalues are the same as the eigenvalues of the full Laplacian, then applying Theorem \ref{GlobalBifurcationTheorem}  we obtain:

\begin{corollary} For any $q\in (1, \frac{2n-2l +4}{2n-2l} )$ the equation (\ref{equation}) on $({\bf CP}^n, g_{FS})$ has at least 
$k$ positive different $U(k)\times U(l)$-invariant solutions if $\lambda \in \left( \frac{4k(k+2)}{q-1} , \frac{4(k+1)(k+3)}{q-1} \right]$.
\end{corollary}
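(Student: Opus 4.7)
The plan is to verify the three hypotheses needed to invoke Theorem \ref{GlobalBifurcationTheorem} on $({\bf CP}^n, g_{FS})$ equipped with the natural cohomogeneity one $U(k)\times U(l)$-action induced by the block decomposition ${\bf C}^{n+1} = {\bf C}^k \oplus {\bf C}^l$. First I would fix an invariant isoparametric function $f$; the obvious candidate is $f([z]) = (|z_0|^2 + \cdots + |z_{k-1}|^2)/|z|^2$, and its isoparametric character follows from the cohomogeneity one structure (see the discussion after \cite{QianTang}). The two critical values $0$ and $1$ correspond to the singular orbits.

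Next I would identify the focal submanifolds. From the formula for $f$ one reads off
$$M_1 = \{[z_0:\dots:z_{k-1}:0:\dots:0]\} \cong {\bf CP}^{k-1}, \qquad M_2 = \{[0:\dots:0:z_k:\dots:z_n]\} \cong {\bf CP}^{l-1},$$
whose real dimensions are $d_1 = 2k-2$ and $d_2 = 2l-2$. Since $k\geq l\geq 2$ we have $0 < d_2 \leq d_1 \leq 2n-2$, so $f$ is proper with $d = 2l-2$, and consequently
$$p_f = \frac{2n-d+2}{2n-d-2} = \frac{2n-2l+4}{2n-2l},$$
matching the hypothesis on $q$.

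The third and main task is to determine the $U(k)\times U(l)$-invariant eigenvalues of $-\Delta_{g_{FS}}$. The $j$-th eigenspace of $-\Delta_{g_{FS}}$ (with eigenvalue $4j(j+n)$) is known to consist of the restrictions to ${\bf CP}^n$ of harmonic polynomials of bidegree $(j,j)$ in $(z,\bar z)$ on ${\bf C}^{n+1}$. I would show that each such eigenspace contains a nonzero $U(k)\times U(l)$-invariant function: the element $(|z_0|^2+\cdots+|z_{k-1}|^2)^j$ is invariant, of bidegree $(j,j)$, and its harmonic projection is nontrivial (one verifies this by pairing with another explicit invariant harmonic polynomial, or by noting that $f^j$ is independent mod lower degree from $f^{j-1},\dots,1$ and arguing inductively using that $\Delta f^j$ is a polynomial in $f$ of degree $j$, obtained from the isoparametric ODE $|\nabla f|^2 = b(f),\ \Delta f = a(f)$). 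This yields that the invariant spectrum coincides with the full spectrum $\{\mu_j = 4j(j+n)\}_{j\geq 0}$.

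The hardest step is this last one: ensuring no eigenvalue is skipped on the invariant side. The cleanest route I would pursue is the ODE method sketched in Section 7 of the paper; writing a $U(k)\times U(l)$-invariant function as $\varphi\circ f$ reduces $-\Delta_g(\varphi\circ f) = \mu\,\varphi\circ f$ to a Sturm-Liouville problem in a single variable whose eigenvalues can be listed explicitly and matched with $4j(j+n)$. Once the invariant eigenvalues are $\mu_j = 4j(j+n)$, the corollary follows by setting $\lambda_j = \mu_j/(q-1)$ and applying Theorem \ref{GlobalBifurcationTheorem} on the interval $(\lambda_k,\lambda_{k+1}]$.
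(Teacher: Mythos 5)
Your proof is correct and reaches the same conclusion, but organizes the eigenvalue computation differently from the paper. You work directly on ${\bf CP}^n$: exhibit $f([z]) = \|(z_0,\dots,z_{k-1})\|^2/\|z\|^2$, read off the focal submanifolds ${\bf CP}^{k-1}$ and ${\bf CP}^{l-1}$ (hence $d = 2l-2$ and $p_f = \frac{2n-2l+4}{2n-2l}$), and then try to show each eigenspace of $-\Delta_{g_{FS}}$ --- harmonic bidegree-$(j,j)$ polynomials --- contains an invariant element by projecting $(\sum_{i<k}|z_i|^2)^j$. The paper instead lifts the action to $\mathbb{S}^{2n+1}$, observes that $F(x,y)=\|x\|^2-\|y\|^2$ on $\mathbb{R}^{2n+2}=\mathbb{R}^{2k}\oplus\mathbb{R}^{2l}$ is an $\mathbb{S}^1\times U(k)\times U(l)$-invariant Cartan--M\"unzner polynomial of degree $2$, invokes the general fact (\cite[Lemma 3.4]{Henry}, recalled at the start of Section 7) that a degree-$d$ Cartan--M\"unzner polynomial has $F$-invariant spectrum $\{\lambda_{di}\}_{i\geq 1}$, and transports this to ${\bf CP}^n$ by the Riemannian submersion. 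The paper's route is tidier precisely because M\"unzner's theory packages the step you rightly flag as hardest --- ensuring no invariant eigenvalue is skipped --- and the identical template handles the $Sp(k)\times Sp(l)$, $U(n+1)$ and Ozeki--Takeuchi cases on ${\bf HP}^n$. Your harmonic-projection sketch is sound but would need a short lemma that the invariant bidegree-$(j,j)$ harmonics form a nonzero space for every $j$; the Sturm--Liouville fallback you mention is the lift argument phrased downstairs and is fine. One last point: your $\mu_j = 4j(j+n) = 2j(2j+2n)$ agrees with the paper's Section 7 computation but not with the interval $\left(\frac{4k(k+2)}{q-1},\frac{4(k+1)(k+3)}{q-1}\right]$ printed in the corollary statement, which is the $n=2$ specialization; it should read $4k(k+n)$ and $4(k+1)(k+1+n)$, so your eigenvalue formula is the correct one to use.
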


\vspace{.5cm}

We consider now the quaternionic projective spaces ${\bf HP}^n$. It also has a canonical metric  analogous to the Fubini-Study metric for the complex projective space. We denote this metric by $h_{FS}$. It is  a homogeneous Einstein metric with positive scalar curvature.  With the canonical  metrics  the usual Hopf fibration
$$ \mathbb{S}^{3}\rightarrow \mathbb{S}^{4n+3} \rightarrow {\bf HP}^n $$
is a Riemannian submersion with totally geodesic fibers. 
${\bf HP}^1 = \mathbb{S}^4$, so we will consider now the case $n\geq 2$. The simplest cohomogeneity one action on ${\bf HP}^n$ is the action by $Sp(n)$ which
has singular orbits a point and ${\bf HP}^{n-1}$. Then an invariant isoparametric function $f$ is proper and $p_f =p_{4n}$, and
Theorem \ref{GlobalBifurcationTheorem} applies to subcritical equations. But more generally there is also  the cohomogeneity one action by 
$Sp(k) \times Sp(l)$, with $k +l =n+1$, $k,l \geq 2$; for this action the singular orbits are
${\bf HP}^{k -1}$ and ${\bf HP}^{l -1}$. Then if $k \geq l$ we have that an invariant isoparametric function
$f$ is proper and $p_f =  \frac{4n-4l+6}{4n-4l+2} >p_{4n}$. In these two cases
the  $f$-invariant eigenvalues of $-\Delta_{h_{FS}}$ are
$\lambda_{2k} =2k(2k+4n+2) $, which are the same as the eigenvalues of the full Laplacian.
There is also a cohomogeneity one action by $U(n+1)$. For an isoparametric function $f$ invariant by this
$U(n+1)$-action one can see that 
the $f$-invariant eigenvalues are $\lambda_{4k}=4k(4k+4n+2)$ and   $p_f =\frac{n+1}{n-1}>p_{4n}$. The details 
of these calculations will
be given in section 7. All the previous examples come from cohomogeneity one actions but there are also
non-homogeneous examples. We will discuss these examples in section 7, but we do not include them here.  
Note that in the case of the actions of $Sp(k) \times Sp(l)$ and $U(n+1)$ Theorem \ref{GlobalBifurcationTheorem} applies to supercritical equations, but for the sake of simplicity we will consider in the next corollary only
the subcritical case $q<p_{4n}$. 

Let 

$$\alpha_k^q  = \frac{4k(4k+4n+2)}{q-1} .$$

It follows from the previous discussion and Theorem \ref{GlobalBifurcationTheorem} that as $\lambda$ crosses $\alpha_k^q$ one has: two new solutions which are  $Sp(n)$-invariant, two new solutions which are 
$Sp(k) \times Sp(l)$-invariant (for each $l$, $2\leq l  \leq [(n+1)/2]$, and $k+l =n+1$) and one new solution which is $U(n+1)$-invariant.
Therefore we have:

\begin{corollary} Let $q\in (1, p_{4n} )$ for any $\lambda \in (\alpha_k^q , \alpha_{k+1}^q ]$ 
equation (\ref{equation}) on  $({\bf HP}^n , h_{FS})$ has at least $(2[(n+1)/2]  +1) k$ different solutions.
\end{corollary}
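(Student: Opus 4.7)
The plan is to apply Theorem \ref{GlobalBifurcationTheorem} separately to each of the cohomogeneity one actions on $({\bf HP}^n , h_{FS})$ discussed just before the corollary, and then to sum the contributions after verifying that the invariant solutions produced by distinct actions are pairwise disjoint.

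I would fix $\lambda \in (\alpha_k^q , \alpha_{k+1}^q ]$ and, for each listed action---by $Sp(n)$, by $Sp(k') \times Sp(l)$ with $k' + l = n+1$ and $2 \leq l \leq [(n+1)/2]$, and by $U(n+1)$---choose a smooth invariant function whose only critical points form the two singular orbits. Each such function is a proper isoparametric function, and as noted above the associated exponent satisfies $p_f \geq p_{4n}$ in every case, so the hypothesis $q \in (1, p_{4n})$ gives $q < p_f$ and Theorem \ref{GlobalBifurcationTheorem} applies to every action.

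Using the invariant-eigenvalue computations recorded above (to be justified in Section 7), I would count the $f$-invariant bifurcation values in $(0,\lambda ]$ for each action. For the $Sp(n)$-action the invariant bifurcation values are $\beta_j = \frac{2j(2j+4n+2)}{q-1}$; a direct comparison yields $\beta_{2k} = \alpha_k^q$ and $\beta_{2k+2} = \alpha_{k+1}^q$, so $\lambda > \alpha_k^q = \beta_{2k}$ and Theorem \ref{GlobalBifurcationTheorem} produces at least $2k$ distinct positive $Sp(n)$-invariant solutions. The invariant spectrum for each $Sp(k') \times Sp(l)$-action is the same, yielding $2k$ invariant solutions per admissible $l$. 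For the $U(n+1)$-action the invariant bifurcation values are precisely the $\alpha_j^q$, producing at least $k$ positive $U(n+1)$-invariant solutions. Summing over the single $Sp(n)$, the $[(n+1)/2]-1$ distinct $Sp(k')\times Sp(l)$, and the single $U(n+1)$ actions gives
\[ 2k + 2k\bigl([(n+1)/2] - 1\bigr) + k = \bigl(2[(n+1)/2] + 1\bigr) k \]
candidate solutions.

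The step I expect to be the main obstacle is verifying that these candidates are pairwise distinct across the three families. A positive solution invariant simultaneously under two of the listed subgroups $G_1, G_2$ of the isometry group $(Sp(n+1) \cdot Sp(1))/\mathbb{Z}_2$ would be invariant under the closed subgroup $\langle G_1 , G_2 \rangle$ they generate; for the standard embeddings any two of the listed cohomogeneity one subgroups generate a subgroup that acts transitively on $({\bf HP}^n , h_{FS})$, so the common invariants are constant and such a solution must be the trivial one $u \equiv 1$. Since the branches produced by Theorem \ref{GlobalBifurcationTheorem} consist of nontrivial solutions, the three families (and, within the middle family, the components corresponding to different $l$) are mutually disjoint, so the count above gives the required lower bound $(2[(n+1)/2]+1)k$.
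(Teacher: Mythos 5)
Your counting argument is correct and matches the paper's: for $\lambda \in (\alpha_k^q , \alpha_{k+1}^q]$, the $Sp(n)$-action contributes $2k$ invariant solutions (since $\alpha_k^q = \beta_{2k}$ in your notation for the $Sp(n)$-invariant bifurcation values), each of the $[(n+1)/2]-1$ actions by $Sp(k')\times Sp(l)$ contributes $2k$, and the $U(n+1)$-action contributes $k$, for a total of $(2[(n+1)/2]+1)k$. The paper obtains the corollary exactly this way, by tallying the contributions of the cohomogeneity one actions discussed in the preceding paragraph.

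The one genuine addition you make is the disjointness argument. The paper implicitly treats the solutions coming from different group actions as distinct without commenting on it; you correctly flag this as the step that needs justification and supply one: a nontrivial solution invariant under two distinct actions from the list would be invariant under the group they generate, and for the standard embeddings any two of $Sp(n)$, $Sp(k')\times Sp(l)$, $U(n+1)$ in $Sp(n+1)$ generate a subgroup that is transitive on ${\bf HP}^n$ (for the $Sp$-block actions this is a direct Lie-algebra computation with overlapping blocks; for $U(n+1)$ one can use that $\mathfrak{u}(n+1)$ is maximal in $\mathfrak{sp}(n+1)$). A slightly cheaper observation that avoids computing generated groups: a function that is $f_1$- and $f_2$-invariant for two isoparametric functions with distinct level foliations has vanishing differential wherever the two foliations are transverse, hence is constant; and the actions listed have distinct singular orbits, so the foliations are distinct. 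Either way your argument closes the gap the paper leaves open, and your overall route is the same as the paper's.
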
 

\vspace{1cm}

Finally we will apply the previous examples  to obtain multiplicity results for the Yamabe equation on 
homogeneous metrics on spheres. 
Many results on multiplicity of solutions
of the Yamabe equation have been obtained in recent years. 
The first nontrivial space of solutions was studied by R. Schoen in \cite{Schoen} (see also the article by O. Kobayashi
\cite{Kobayashi}): the Riemannian product of $(\mathbb{ S}^n, g_0 )$ with a circle of radius $T>0$. 
Schoen pointed out that all solutions depend only on the $\mathbb{S}^1$-variable and therefore the Yamabe equation reduces 
to an ordinary differential equation, and by studying this ODE showed that the number of solutions grows at
$T \rightarrow \infty$.  ODE techniques were also used in \cite{Petean} to consider the case when $S^1$ is 
replaced by $(\mathbb{ S}^m, g_0 )$ with $m\geq 2$, proving similar multiplicity results looking for solution which
only depend on $(\mathbb{ S}^m, g_0 )$  and are radial.
Other multiplicity results were obtained using bifurcation theory. To apply bifurcation theory one considers a one dimensional family of 
constant scalar curvature Riemannian metrics (on a closed manifold) and look for the elements in the family where new nearby solutions of the
Yamabe equation appear (these elements are called the {\it bifurcation points} of the family). The general frame to apply 
local bifurcation theory to the Yamabe equation was studied in \cite{deLima}: as a consequence the authors  proved for instance
that in the family Riemannian products of constant positive scalar curvature were one of the factors is multiplied by
$t>0$ there exist infinite bifurcation points. See also \cite{Bettiol} for related results. Global bifurcation theory 
(which considers the global structure of the space of solutions appearing at the bifurcation points) was only applied 
in the case of a Riemannian product with a sphere (see \cite{Henry,Yanyanli}). Theorem \ref{GlobalBifurcationTheorem}
allows us to obtain multiplicity results in much more general situations. 
As an example we will focus on  the family of homogeneous metrics on spheres. This case was considered by R. Bettiol and P. Piccione  in
\cite{BettiolPiccione} where the authors show the existence of infinite such homogeneous metrics which are 
bifurcation points (in an appropriate setting) for the Yamabe equation (see  also  \cite{Otoba} for a related discussion).  We will give a brief
discussion about 
homogeneous metris on spheres in the appendix at the end of the article, summarizing the description given for instance
in {\cite{BettiolPiccione, ziller1982}. There are three families of homogeneous metrics on spheres. In the
three families the homogeneous metrics appear as the metrics in the total space of a Riemannian submersion with totally geodesic
fibers. One family is obtained by considering the isometric $\mathbb{S}^1$-action on $(\mathbb{ S}^{2n+1} , g_0 )$. The quotient is 
$({\bf CP}^n ,g_{FS} )$. Multiplying by $x>0$ the metric in the direction of the orbits one obtains a one-dimensional family of
homogeneous metrics $g_x$  on $\mathbb{ S}^{2n+1}$, with a fibration

$$(\mathbb{ S}^1, x\, \sigma^2) \rightarrow (\mathbb{ S}^{2n+1} , g_x ) \rightarrow ({\bf CP}^n ,g_{FS} )$$

\noindent
with totally geodesic fibers (here $\sigma^2$ is the standard metric on $S^1$). The scalar curvature of  $g_x$ is 
$$  s_{g_x} = 4n^2 + 4n-2nx.$$
(see \cite{BettiolPiccione} or the discussion in the appendix). It
was shown in \cite{BettiolPiccione} that this family is locally rigid, i.e. there are no other solutions of the Yamabe equations close to
the elements of the family. 

The second family (and the most interesting one from the point of view of bifurcation for solutions of the Yamabe equation) is obtained by considering the  Hopf fibration

$$(  \mathbb{S}^{3}  , \Sigma_{i=1}^3 x_i (\sigma^i)^2) )\rightarrow ( \mathbb{S}^{4n+3} , g_x ) \rightarrow ( {\bf HP}^n , h_{FS} ),$$
where $x=(x_1,x_2,x_3)$, $x_i >0$. The metric on the fiber $\mathbb{S}^{3}$ is any homogeneous metric, and there is a
 three parameters family of them (details can be found in the appendix).  The scalar curvature of  $g_x$ is 
$$s_{g_x}=\frac{2}{x_1 x_2 x_3}\left(x_1^2 +x_2^2 +x_3^2-(x_2-x_3)^2-(x_3-x_1)^2-(x_1-x_2)^2\right)-4n(x_1+x_2+x_3)+16n^2+32n.$$ 
Therefore the Yamabe equation on 
$ ( \mathbb{S}^{4n+3} , g_x )$ can be written as:

$$-\Delta_{  g_{x}  } u + \frac{s_{g_x}}{a_{4n+3}} u  =  \frac{s_{g_x}}{a_{4n+3}} u^{p_{4n+3}} ,$$

\noindent
which is equation (\ref{equation}) with $\lambda =  \frac{s_{g_x}}{a_{4n+3}}$.
We consider solutions of the form $u:  {\bf HP}^n \rightarrow \re_{>0}$. 
Since the projection $\pi:  ( \mathbb{S}^{4n+3} , g_x ) \rightarrow ( {\bf HP}^n , h_{FS} )$ is a Riemannian submersion
with totally geodesic fibers it follows that $\pi \circ \Delta_{g_x} = \Delta_{h_{FS}} \circ \pi$ and therefore $u$ is a
solution of equation (\ref{equation}) if and only if 

$$-\Delta_{  h_{FS}   } u + \frac{s_{g_x}}{a_{4n+3}} u =  \frac{s_{g_x}}{a_{4n+3}} u^{p_{4n+3}} $$

Then one can apply  Theorem \ref{GlobalBifurcationTheorem}, or more explicitly Corollary 1.8, to produce solutions of the Yamabe 
equation in terms of $s_{g_x}$. We see that as $s_{g_x}$ goes to infinity the number of solutions provided by Corollary 1.8 goes to infinity. Explicitly we obtain:

\begin{corollary} If $s_{g_x} \in (a_{4n+3}  \ \alpha_k^{p_{4n+3}} ,  a_{4n+3}  \ \alpha_{k+1}^{p_{4n+3}} ]$ then
the Yamabe equation on  $( \mathbb{S}^{4n+3} , g_x )$ has at least  $2([(n+1)/2]  +1) k$ different solutions.

\end{corollary}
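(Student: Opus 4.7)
My plan is to reduce the Yamabe equation on $(\mathbb{S}^{4n+3}, g_x)$ to an instance of equation (\ref{equation}) on the base $(\hpn, h_{FS})$ of the Hopf fibration, and then invoke the multiplicity established in the preceding corollary for $\hpn$. The whole argument is essentially a pullback argument, with the single piece of geometric input being that the fibration has totally geodesic fibers.

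First, I would use the standard fact that for a Riemannian submersion with minimal (in particular totally geodesic) fibers, the rough Laplacian commutes with the pullback on basic functions: given $u\in C^2(\hpn)$, one has $\Delta_{g_x}(u\circ\pi) = (\Delta_{h_{FS}} u)\circ\pi$. This is the standard O'Neill/Besse identity and is the sole technical ingredient. Rewriting the Yamabe equation on $(\mathbb{S}^{4n+3}, g_x)$ in normalized form as $-\Delta_{g_x}v + \lambda v = \lambda v^{p_{4n+3}}$ with $\lambda = s_{g_x}/a_{4n+3}$, and restricting the ansatz to $v=u\circ\pi$ for positive $u:\hpn\to\re$, this identity reduces the Yamabe equation on the total space to
\begin{equation*}
-\Delta_{h_{FS}} u + \lambda u = \lambda u^{p_{4n+3}}
\end{equation*}
on the base, which is exactly equation (\ref{equation}) with $q=p_{4n+3}$.

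Next I would check the hypothesis of Corollary 1.9, namely that $q=p_{4n+3}$ lies in $(1,p_{4n})$. Since $p_m=(m+2)/(m-2)$ is strictly decreasing in $m$ and $4n+3>4n$, this is immediate. The scalar curvature hypothesis $s_{g_x}\in (a_{4n+3}\,\alpha_k^{p_{4n+3}}, a_{4n+3}\,\alpha_{k+1}^{p_{4n+3}}]$ translates directly to $\lambda\in(\alpha_k^{p_{4n+3}},\alpha_{k+1}^{p_{4n+3}}]$, so Corollary 1.9 furnishes the asserted number of distinct positive solutions on $\hpn$, split among the various cohomogeneity one actions ($Sp(n)$, $Sp(k)\times Sp(l)$ for $2\le l\le[(n+1)/2]$, and $U(n+1)$). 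Since $\pi$ is surjective, different base solutions have different pullbacks, and so each $u$ produced on $\hpn$ yields a genuinely distinct positive solution $u\circ\pi$ of the Yamabe equation on $(\mathbb{S}^{4n+3},g_x)$.

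The main obstacle is essentially bookkeeping: one must verify that the various families of solutions produced on $\hpn$—$Sp(n)$-invariant, $Sp(k)\times Sp(l)$-invariant for the different values of $l$, and $U(n+1)$-invariant—remain mutually distinct after pulling back, and that nothing conspires to identify an $Sp(k)\times Sp(l)$-branch with, say, a $U(n+1)$-branch at a coincidence of bifurcation values. Since the eigenvalue spectra along each symmetry class were computed separately and the branches emanating from each $(1,\lambda_k)$ lie in disjoint invariant subspaces, distinctness on $\hpn$ is preserved, and the count of solutions on $\mathbb{S}^{4n+3}$ then follows from Corollary 1.9 by direct pullback.
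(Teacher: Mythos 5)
Your argument is exactly the paper's: reduce the Yamabe equation on $(\mathbb{S}^{4n+3},g_x)$ to equation (1.1) on $({\bf HP}^n,h_{FS})$ by pulling back along the Hopf submersion with totally geodesic fibers (so the Laplacians commute on basic functions), note $p_{4n+3}<p_{4n}$, and then invoke the multiplicity count already established for $\hpn$. This matches the paper's proof; the only cosmetic discrepancy is a reference labelling slip (the $\hpn$ multiplicity result you apply is what the paper calls Corollary 1.8).
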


In  \cite{BettiolPiccione} the authors consider the particular case when $x_1 , x_2, x_3$ are equal and therefore the 
metric on $\mathbb{S}^3$ is a multiple of the round metric $g_0$. In \cite[Proposition 8.2]{BettiolPiccione} the authors proved that there are infinitely many bifurcation points, and close to the points there are homogeneous metrics for which the Yamabe equation has at least 3 different solutions. 
In \cite{Otoba} the authors prove that all the solutions appearing in these bifurcation points are actually functions of ${\bf HP}^n$, as in the previous Corollary.

\vspace{.5cm}

The third family of homogeneous metrics on spheres appears only in $ \mathbb{S}^{15}$. One has a Riemannian fibration
$\mathbb{S}^7 \rightarrow \mathbb{S}^{15} \rightarrow \mathbb{S}^8$ and can apply the same ideas by looking at
isoparametric functions on $ \mathbb{S}^8$. 

\vspace{.5cm}

{\bf Questions}: The ultimate goal would be to understand all solutions of the Yamabe equation on homogeneous metrics
on spheres. The first question would be if the space of solutions $\{ (g, u ) : $ $g$ is a homogeneous metric on
$ \mathbb{S}^{n}$ and $u$ is a positive solution of the Yamabe equation on $( \mathbb{S}^{n} ,g) \}$ is connected:
i.e. do all solutions belong to the connected component of the homogeneous metrics? Another interesting first step
would be to understand the structure of the space of solutions bifurcating from a bifurcation point. The results in
this article construct solutions which are $f$-invariant with respect to some isoparametric function $f$. Are all
bifurcating solutions of this form?

\vspace{.5cm}

The article is organized as follows. In section 2 we will write explicitly the ordinary differential equation given
by the restriction of the Yamabe equation to functions invariant by some isoparametric function. In section 3 we
will discuss the bifurcation points for the resulting ODE, proving Theorem 1.1. In section 4 we prove some technical lemmas that will
be used in the following sections. These results show that under the condition $q<p_f$ the ODE behaves
like a subcritical equation. In section 5 we will prove Theorem 1.2 and in section 6 we will prove Theorem 1.3. 
In section 7 we will describe how to compute the eigenvaules of the restricted Laplacian needed for the applications.
Finally in the appendix we will give a brief description the family of homogeneous metrics on spheres and in
particular give  formulas for the scalar curvature of such metrics.

\section{Yamabe-type equations for \texorpdfstring{$f$}{}-invariant functions}

\noindent
Let ($M^n,g$) be a closed connected Riemannian manifold of dimension $n$.
Recall that a function $f:M\rightarrow[t_0,t_1]$  is called  {\it isoparametric} if  $|\nabla f|^2=b(f)$, $\Delta f=a(f)$ for 
some smooth functions 
$a,b$. Isoparametric functions on general Riemannian manifolds were first considered by Q-M. Wang in
\cite{Wang}, following the classical theory of Cartan, Segre and others in the case of space forms.

From the general theory of isoparametric functions we know that the only zeros of the function  $b:[t_0,t_1]\rightarrow \mathbb{R}^+$ are $t_0$ and $t_1$,  which means that the only critical values of $f$ are its minimum and its
maximum (see \cite{Wang}). It is also proved in \cite{Wang} that $M_1 = f^{-1} (t_0 )$ and
$M_2 = f^{-1} (t_1 )$ are smooth submanifolds; they are called the {\it focal submanifolds}
of $f$. We let $d_i$ be  the dimension of $M_i$. There are examples, like the function $x_1^2$ on the round
$n$-sphere, of isoparametric functions such that  $d_1 =n-1$ (see \cite{Tang}).  But we will consider the generic case when
$d_1 , d_2 \leq n-2$. In this case the isoparametric function $f$ is called {\it proper}  and all the level sets
$M_t = f^{-1} (t)$ are connected (as proved in \cite[Proposition 2.1]{Tang}). It is also known that the
level sets $M_t$ are tubes around any of the focal submanifolds. 

Next we consider geodesics which are transversal to the level sets of $f$:

\begin{definition}
A geodesic $\gamma:[l_1,l_2]\rightarrow M$ is called an $f-$segment if $f(\gamma(l))$ is an increasing function of $l$ and $\gamma^{\prime}(l)=\nabla f/\sqrt{b}$ wherever $\nabla f\neq0$.
\end{definition}

Note that $f$-segments are parametrized by arc-length. It is also easy to see that the integral curves
of $\nabla f$ (parametrized by arc-length) are $f$-segments, and that $f$-segments realize the distance 
between the level sets $M_s$, $M_t$ (see \cite[Lemma 1]{Wang}). If $\gamma :[0,s] \rightarrow M$ is
an $f$-segment then $s= $ length$(\gamma ) =d(M_{f(\gamma (0))} , M_{f(\gamma s}))$, and $(f \circ \gamma )'(t) = \sqrt{b(f(\gamma (t))}$.
By reparametrizing $\gamma$ by $l=(f\circ\gamma)^{-1}(s)$ for $s\in[f(\gamma (0),f(\gamma (s))]$ it is easy to obtain the
formula for $d_g(M_c,M_d)$ for any $t_0 \leq c < d \leq t_1$:

\begin{equation*}
d_g(M_c,M_d)=\int_{c}^{d}\dfrac{1}{\sqrt{b(t)}} dt,
\end{equation*}

Let $t^*=d_g(M_{1},M_{2})$ and  ${\bf d} : M \rightarrow [0,t^* ]$,
${\bf d} (x) =d_g(M_1 ,x)$.

We will consider functions which are constant on the level sets of $f$:

\begin{definition}
A function $u:M\rightarrow \mathbb{R}$ is called $f$-invariant if $u(x)=\phi({\bf d}(x))$ for some  function $\phi:[0,t^* ]\rightarrow \mathbb{R}$.
\end{definition}

An $f$-invariant function $u$ is determined by the corresponding function $\phi$. Let us recall the most familiar case of
radial functions on Euclidean space. 
Namely we consider $F:B(0,\varepsilon ) \rightarrow
\re$ a radial function and write $F(x) = \alpha ( \| x \| )$ for some $\alpha :[0,\varepsilon ) \rightarrow \re$. 
Then it is well-known that if $F$ is
$C^1$ then $\alpha '(0) =0$. And one can write $\alpha (r)=F(r,0,...0)$ to easily see that if $F$ is of clase $C^{2,\alpha}$ then
also $\alpha \in C^{2,\alpha}$. The reciprocal is also well known: if $\alpha  \in C^{2,\alpha} [0,\varepsilon )$ and 
$\alpha '(0)=0$ then $F\in C^{2,\alpha} (B(0,\varepsilon ))$.
We have also the expresion for the Laplacian $\Delta F$:

\begin{equation}
\Delta F (x) = \alpha ''( \|x \| ) + \frac{n-1}{\| x \| } \alpha ' (\| x \| )
\end{equation}

\noindent
if $x\neq 0$, and taking the limit

\begin{equation}
\Delta F (0) = n \alpha ''(0) .
\end{equation}

We will consider the same identification for a general isoparametric function $f$.
We will denote by $\mathcal{B} = \{ \phi \in C^{2,\alpha} ([0,t^* ] ): \phi '(0) =0 = \phi '(t^* ) \}$.

\begin{lemma}
If we denote by
$C^{2,\alpha}_f (M)$ the set of $C^{2,\alpha}$ functions on $M$ which are $f$-invariant, then the
application $\phi \mapsto u(x)=\phi({\bf d}(x))$ identifies $\mathcal{B}$ with $C^{2,\alpha}_f (M)$.
\end{lemma}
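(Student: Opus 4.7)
The plan is to verify that the map $\Phi : \phi \mapsto \phi \circ {\bf d}$ is a well-defined bijection between $\mathcal{B}$ and $C^{2,\alpha}_f(M)$. Injectivity is immediate: for each $t \in (0, t^*)$, picking any point $x$ on the regular level set $M_t$ recovers $\phi(t) = u(x)$, and continuity then determines $\phi$ on $[0, t^*]$. The substance is therefore the regularity equivalence, which I would split according to whether one is in the regular set $M \setminus (M_1 \cup M_2)$ or in a tubular neighborhood of a focal submanifold.

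On the regular set, $f$ is a submersion, and reparametrizing an $f$-segment gives ${\bf d}(x) = \int_{t_0}^{f(x)} b(s)^{-1/2}\, ds$, which is smooth wherever $b > 0$. Hence composition with any $\phi \in C^{2,\alpha}((0, t^*))$ preserves $C^{2,\alpha}$ regularity in both directions, with no boundary condition required. The heart of the argument is the analysis near each focal submanifold $M_i$. Properness of $f$ gives $\operatorname{codim}(M_i) = n - d_i \geq 2$, so the normal exponential map is a diffeomorphism of a neighborhood of the zero section of $\nu(M_i)$ onto a tubular neighborhood of $M_i$. Trivializing $\nu(M_i)$ via an orthonormal frame over a chart $V \subset M_i$ produces Fermi coordinates $(q, v) \in V \times B(0, \varepsilon) \subset V \times \re^{n-d_i}$ in which ${\bf d}(q, v) = \|v\|$ exactly (and symmetrically ${\bf d} = t^* - \|v\|$ near $M_2$), since the radial geodesics of the normal exponential are $f$-segments. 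In these coordinates $\phi \circ {\bf d}$ is the Euclidean radial function $v \mapsto \phi(\|v\|)$ with trivial dependence on $q$, and the classical radial lemma recalled in the paragraph preceding the statement says this is $C^{2,\alpha}$ on $B(0, \varepsilon) \subset \re^{n-d_i}$ if and only if $\phi \in C^{2,\alpha}[0, \varepsilon)$ with $\phi'(0) = 0$; the symmetric analysis at $M_2$ yields $\phi'(t^*) = 0$. For the converse direction, I would recover $\phi$ from $u \in C^{2,\alpha}_f(M)$ by setting $\phi(s) := u(\gamma(s))$ along any $f$-segment $\gamma:[0, t^*] \to M$ (the value is independent of the choice by $f$-invariance), and then read off both the $C^{2,\alpha}$ regularity on $[0, t^*]$ and the boundary derivative conditions from the same Fermi-chart analysis in reverse.

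The main technical point that I expect to need careful handling is the passage from the Euclidean radial lemma to its Fermi-chart version, since a priori $C^{2,\alpha}$ regularity of $(q, v) \mapsto \phi(\|v\|)$ in the joint variables is stronger than in $v$ alone. However, because $q$ enters only through the smooth trivialization and not through the formula $\phi(\|v\|)$ itself, this extension is essentially automatic once one fixes suitable product charts. The role of properness is precisely that codimension at least $2$ provides enough normal directions for the symmetry $\|v\| = \|{-v}\|$ to force $\phi'(0) = 0$ via linearity of the differential, exactly as in the classical Euclidean argument.
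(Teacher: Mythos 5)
Your proof is correct and follows essentially the same route as the paper: choose Fermi (tubular normal) coordinates $(q,v)$ near each focal submanifold in which ${\bf d}$ becomes $\|v\|$ exactly, and then invoke the classical Euclidean radial-function lemma, exactly as the paper does after noting that away from the focal sets ${\bf d}$ is smooth with nonvanishing gradient. One small correction to your closing remark: the implication ``$u\in C^1 \Rightarrow \phi'(0)=0$'' already follows from the one-dimensional radial argument and does not require codimension $\geq 2$; properness is rather what guarantees the tube structure around the focal sets and the connectedness of the level sets, so that ${\bf d}$ genuinely captures $f$-invariance.
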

\begin{proof}
Let $\phi :[0,t^* ] \rightarrow \re$ and $u(x)=\phi({\bf d}(x))$. Note  that away from the focal submanifolds ${\bf d}$ is smooth 
and has no critical points. Therefore 
it is clear that the function $u\in C^{2,\alpha}(f^{-1}(t_0 , t_1 ))$ if and only if $\phi \in C^{2,\alpha} (0,t^* )$. We only have to deal with 
the case $x\in M_{1}$ or $x\in M_{2}$. Both cases are equivalent so let us consider for instance $x\in M_{1}$. We choose normal coordinates  around $x$ in $M_{1}$ and then normal coordinates for the normal bundle of $M_{1}$. Namely, around $x$ we
put coordinates of the form $(z,y)$ where $(z_1,...z_{d_1} ) \in {\bf R}^{d_1}$, $y \in {\bf R}^{n-d_1}$ and $\|y \| = {\bf d} (z,y)$.
Therefore in these coordinates $u(z,y)=u(y) =  \phi (\| y \| )$. 

The lemma is then reduced to the familiar case of radial functions on Euclidean space mentioned above. 

\end{proof}

Now we obtain the expression of  the Yamabe-type equations of $(M,g)$ for $f$-invariant functions. For this one needs to
express $\Delta u$ in terms of $\phi$. 

If for $x$ is not in the focal submanifolds, we have
\begin{align*}
\Delta(u(x))&=\Delta(\phi({\bf d}(x))\\
&=\phi^{''}  (  {\bf d}(x) ) \    (|\nabla {\bf d}(x))|^2) + \phi'  ({\bf d}(x) ) \ (\Delta ({\bf d}(x))).
\end{align*}
But
\begin{align*}
|\nabla {\bf d}|^2&=1\quad\text{and},\\
\Delta {\bf d}(x)&=\Delta \left(\int_{t_0}^{f(x)}\dfrac{dt}{\sqrt{b(t)}} \right)=\frac{-b'}{2b\sqrt{b}} (f(x)) |\nabla f|^2 (x)+\frac{\Delta f (x)}{\sqrt{b} (f(x) )}\\
&=\frac{1}{2\sqrt{b}}(-b'+2a) (f(x) )= h({\bf d} (x) ),
\end{align*}
where we denote by $h(t)$  the mean curvature  of the hypersurface ${\bf d}^{-1} (t)$ (see for instance \cite[page 165]{Tang}).

The function $h$ is obviously smooth in $(0,t^* )$. It is also well-known the asymptotic behaviour of the mean curvature close to focal submanifolds. Since each ${\bf d}^{-1} (t)$ is a tube over either $M_{1}$ or $M_{2}$ a coordinate system centered in $M_{1}$ or $M_{2}$  appropiate for this study are Fermi coordinates (also used in the previous lemma), which are the generalization of normal coordinates that arises when the center of normal neighborhood is replaced by a submanifold.\\
For instance in \cite{Ge} the authors used these coordinates to compute the power series expansion formula for the shape operator of ${\bf d}^{-1} (t)$ with respect to the distance to $M_{1}$. \\
Then  explicitly  Corollary 2.2 in \cite{Ge} says that, $$h(t)= \dfrac{codim(M_{1})-1}{t}+t(trace(A)+trace(B))+o(t^2),$$
where $A,B$ are matrices independent of $t$ (if we consider the expansion close to $M_2$ instead of
$M_1$ we obtain a similar formula, only changing the sign). In particular we have the following asymptotic behaviour
of $h$ close to the focal varieties:

\begin{lemma}\label{mean-curvature}

$$\lim_{t\rightarrow 0}  t   \ h(t) = n-d_1 -1 \ \ \ ,  \ \ \ \ \lim_{t\rightarrow t^*} (t-t^* )   \ h(t) = n-d_2 -1. $$

\end{lemma}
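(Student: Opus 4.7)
The plan is to read off both limits directly from the expansion formula of Ge already quoted in the paragraph preceding the statement, once one is careful about the sign conventions that arise from the fact that the unit normal $\nabla \mathbf{d}$ points outward from $M_1$ but inward toward $M_2$. No new geometric computation is required: the work is in correctly parsing that formula at each end.

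For the first limit I would use Fermi coordinates around $M_1$ exactly as in the proof of Lemma 2.3, so that $\mathbf{d}(z,y) = \|y\|$ and the level set $\mathbf{d}^{-1}(t)$ becomes the geodesic tube of radius $t$ around $M_1$. Corollary 2.2 of \cite{Ge} then gives the expansion
\begin{equation*}
h(t) \;=\; \frac{\operatorname{codim}(M_1)-1}{t} \;+\; t\bigl(\operatorname{tr}(A)+\operatorname{tr}(B)\bigr) \;+\; o(t^2),
\end{equation*}
where $A,B$ are $t$-independent matrices built from the second fundamental form and the curvature operator of $(M,g)$ evaluated on $M_1$. Since $\operatorname{codim}(M_1) = n-d_1$, multiplying by $t$ and letting $t\to 0$ kills the $O(t^2)$ and $O(t^{-1})\cdot t \cdot t$ corrections and leaves $\lim_{t\to 0} t\,h(t) = n-d_1-1$.

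For the second limit the same argument is applied at $M_2$, but with the distance variable $s := t^{*}-t$ measuring distance to $M_2$. The tubes $\mathbf{d}^{-1}(t)$ are also tubes of radius $s$ around $M_2$, so Ge's formula in the variable $s$ gives a mean curvature of absolute value $\frac{n-d_2-1}{s} + O(s)$. However, the unit normal that defines $h(t)$ is $\nabla \mathbf{d}$, which points \emph{away} from $M_2$, i.e.\ in the opposite direction to the outward normal of the tube around $M_2$; this is exactly the sign change mentioned by the authors immediately after the Ge expansion. Hence
\begin{equation*}
h(t) \;=\; -\,\frac{n-d_2-1}{t^{*}-t} \;+\; O(t^{*}-t) \qquad \text{as } t\to t^{*},
\end{equation*}
and multiplying by $(t-t^{*}) = -(t^{*}-t)$ yields $\lim_{t\to t^{*}}(t-t^{*})\,h(t) = n-d_2-1$.

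The only real subtlety, and therefore the place I would write most carefully, is the sign bookkeeping at $M_2$: the function $\mathbf{d}$ is distance to $M_1$, not to $M_2$, so one must justify that relating $h(t)$ to the standard tube-mean-curvature formula around $M_2$ introduces precisely one sign flip and no additional factor. Once that is settled, the asymptotics are immediate from Ge's expansion and the fact that $\operatorname{codim}(M_i) = n - d_i$.
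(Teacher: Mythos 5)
Your proof is correct and follows the same route as the paper: both simply read off the limits from the expansion in Corollary 2.2 of \cite{Ge}, with the sign flip at $M_2$ coming from the fact that $\mathbf{d}$ measures distance to $M_1$ rather than to $M_2$ (the paper handles this with the parenthetical remark ``only changing the sign''). Your careful bookkeeping of that sign—via $\nabla\mathbf{d}$ pointing away from $M_2$ and $\Delta\mathbf{d} = -\Delta d_g(M_2,\cdot)$—is a slightly more explicit version of what the paper leaves implicit, but the argument is the same.
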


Summarizing we see that if $x$ is not in the focal submanifolds then 

\begin{equation}
\Delta u (x) = \phi'' ({\bf d}(x)) + h ({\bf d} (x) \phi ' ({\bf d}(x))
\end{equation}

And  taking the limit we see that if if $x\in M_1 $

\begin{equation}
\Delta u (x) =(n-d_1 )  \phi'' (0) 
\end{equation}

\noindent
and if $x\in M_2 $

\begin{equation}
\Delta u (x) =(n-d_2 )  \phi'' (t^* ) 
\end{equation}

We have proved 

\begin{lemma}Let $u \in C^{2,\alpha}_f (M)$, $ u(x)=\phi({\bf d}(x))$, with $\phi \in \mathcal{B}$. Then $u$ is a solution of equation
(\ref{equation}) if and only if the function $\phi$ satisfies 

\begin{equation}\label{ODE}
-(\phi ''+ h \phi ') +\lambda\phi=\lambda \phi^{q }.
\end{equation}

\noindent 
on $[0,t^* ]$. For $t\in (0,t^* )$ $h(t)$ is the mean curvature of  ${\bf d}^{-1} (t)$.

\end{lemma}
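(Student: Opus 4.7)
The plan is to assemble the computations already performed in Section 2 into a clean equivalence. Essentially all the work is done; only the matching of PDE and ODE at the focal submanifolds and the justification of both implications need to be organized.

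First I would handle the interior. On the open set $M \setminus (M_1 \cup M_2) = f^{-1}((t_0,t_1))$, the map $\mathbf{d}$ is a smooth submersion onto $(0,t^*)$ and the formula $\Delta u(x) = \phi''(\mathbf{d}(x)) + h(\mathbf{d}(x))\,\phi'(\mathbf{d}(x))$ has already been derived. Substituting into $-\Delta_g u + \lambda u = \lambda u^q$ and using that every $t \in (0,t^*)$ is attained by $\mathbf{d}$ shows that $u$ solves (\ref{equation}) on the open set if and only if $\phi$ satisfies (\ref{ODE}) on $(0,t^*)$.

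Next I would treat the boundary behavior at $t=0$; the case $t=t^*$ is identical up to a sign. The term $h\phi'$ is a priori of the form $\infty\cdot 0$ at $t=0$, so the ODE (\ref{ODE}) at the endpoint must be interpreted through the limit. Since $\phi \in \mathcal{B}$ we have $\phi'(0)=0$, and by Lemma~\ref{mean-curvature} one has $t\,h(t) \to n-d_1-1$ as $t\to 0^+$. Writing
$$h(t)\phi'(t) = \bigl(t\,h(t)\bigr)\cdot\frac{\phi'(t)-\phi'(0)}{t}$$
and using $\phi \in C^{2,\alpha}$, we deduce $\lim_{t\to 0^+} h(t)\phi'(t) = (n-d_1-1)\phi''(0)$, hence
$$\lim_{t\to 0^+}\bigl(\phi''(t)+h(t)\phi'(t)\bigr) = (n-d_1)\phi''(0).$$
Comparing with $\Delta u(x) = (n-d_1)\phi''(0)$ for $x\in M_1$, we see that the PDE (\ref{equation}) evaluated at any point of $M_1$ is precisely the limiting form of the ODE (\ref{ODE}) at $t=0$.

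Finally, to conclude both directions of the equivalence, I would argue that $u \in C_f^{2,\alpha}(M)$ is a solution of (\ref{equation}) if and only if the identity $-\Delta_g u(x) + \lambda u(x) = \lambda u(x)^q$ holds at every $x\in M$. By $f$-invariance and Lemma~2.3, both sides are continuous functions of $\mathbf{d}(x)$, so the pointwise identity on $M$ is equivalent to its restriction to the interior of $\mathbf{d}$ (by density) together with its validity at the endpoints (by continuity). The interior step gives (\ref{ODE}) on $(0,t^*)$, the endpoint step gives its limiting form at $0$ and $t^*$, and continuity of $\phi''$ and the computation above show these are the same as requiring (\ref{ODE}) on the closed interval $[0,t^*]$. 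The only possible obstacle is making the endpoint interpretation rigorous, but the $C^{2,\alpha}$ hypothesis on $\phi$ combined with the explicit asymptotic of $h$ from Lemma~\ref{mean-curvature} resolves it immediately.
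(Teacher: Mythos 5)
Your proposal is correct and follows essentially the same approach as the paper: derive $\Delta u = \phi''\circ\mathbf{d} + (h\circ\mathbf{d})\,(\phi'\circ\mathbf{d})$ away from the focal submanifolds, substitute into (\ref{equation}), and use the asymptotics of $h$ from Lemma~\ref{mean-curvature} to pass to the limit at $M_1,M_2$. You in fact make explicit the computation that the paper compresses into ``taking the limit,'' namely writing $h\phi' = (t\,h(t))\,\frac{\phi'(t)-\phi'(0)}{t}$ and using $\phi'(0)=0$ to get $\lim_{t\to0^+}(\phi''+h\phi') = (n-d_1)\phi''(0)$; this is exactly the content needed, and the $t=t^*$ case is symmetric.
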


\section{Bifurcation points}

In this section we will use local bifurcation theory (as can be found for instance  in \cite{Ambrosetti, Nirenberg}) to prove Theorem \ref{LocalBifurcationTheorem}.
We fix a proper isoparametric function $f$ on the closed Riemannian manifold $(M,g)$. As in the previous section
we denote by $t^*$ the distance between the two focal submanifolds. 
It follows from the previous section that positive $f$-invariant solutions of \ref{equation} are given by
positive solutions of the problem
\begin{align*}
\phi''+h\phi'&+\lambda(\phi^{q}-\phi)=0,\\
\phi'(0)&=0=\phi'(t^*) .
\end{align*}

For all positive constant $\lambda$ the function $\phi\equiv 1$ is a {\it trivial} solution of the equation. We will prove
Theorem \ref{LocalBifurcationTheorem} by studying bifurcation from this path of trivial solutions $\lambda \mapsto (1, \lambda )$.

\begin{proof} (Theorem  \ref{LocalBifurcationTheorem}) We define $S:\mathcal{B} \times\mathbb{R}^+\rightarrow C^{0,\alpha}([0,t^*])$ by
$$S(\phi,\lambda)=\phi'' + h\phi'+\lambda(\phi^{q}-\phi).$$

Then we are trying to solve the operator equation $S(u,\lambda )=0$.

It is easy to compute that 
$$D_{\phi}S_{(\phi,\lambda)}(u)=u'' + hu'+\lambda(q\phi^{q-1}u-u),$$
and in particular $$D_{\phi}S_{(1,\lambda)}(u)=u'' + hu'+\lambda(q-1)u.$$\medskip \\
Bifurcating branches will appear at the values of $\lambda$ for which  the kernel of linear operator $D_{\phi}S_{(1,\lambda)}$ is
nontrivial. 
Note that since  the asymptotic behavior of $h$ in $0$ is $\frac{n-d_1 -1}{t}$ the following initial value problem  has a unique solution 
\begin{align*}
u''+hu'&+\lambda(q-1)u=0,\\
u(0)&=1,\\
u'(0)&=0.
\end{align*}
The equation is of course the eigenvalue equation for the Laplacian restricted to $f$-invariant functions. 
One can see (for instance in \cite[Proposition 3.2]{Henry})
the existence of infinitely many eigenvalues of the positive  Laplacian operator in the set of $f$-invariant functions. 
We call them $\mu_k$, $k\geq 1$.
This means that $\lambda = \lambda_k = \frac{\mu_k}{q-1}$  the solutions $u_k$  of the above initial value problem 
(with $\lambda = \lambda_k$) satisfy that $u'_k(t^*)=0$. Therefore  $ker\; D_{\phi}S_{1,\lambda_k(q-1)}\neq 0$, and it has dimension one.

We can normalize  $u_k$ so that $\int_M  u_k^2=1$.  Since the operator $L=D_{\phi}S_{(1,\lambda_k(q-1))}$  is self-adjoint we have \medskip\\
\centerline{$
Range(L)=\{u\in C^{0,\alpha}(0,t^*)/ \int_M  uu_k=0\}.
$}\medskip \\
We have  that $D^2_{\phi,\lambda}S_{(1,\lambda_k(q-1))}[u_k]=(q-1)u_k\notin Range(L)$.
\medskip \\
Therefore from the well known theory of local bifurcation for simple eigenvalues (see for instance \cite[Theorem 2.8]{Ambrosetti}, 
\cite[Theorem 3.2.2]{Nirenberg} or the original article by M. G. Crandall and P. H. Rabinowitz \cite{Crandall}) we can see  that for all $k \geq 1$ $(1,\lambda_k )$ is a bifurcation point 
and moreover all nontrivial solutions in a neighborhood of $(1,\lambda_k )$ are given a branch 
$t \mapsto (u(t), \lambda (t) )$, $t\in (-\varepsilon , \varepsilon )$, such that  $\lambda (0) = \lambda_k $, $u(0) \equiv 1$,  and
$u(t) \neq 1$ if $t\neq 0$. This proves Theorem \ref{LocalBifurcationTheorem}.

\end{proof}

Later we will need the following result about the  number of zeros $n_k$ of the functions  $u_{k}$:

\begin{lemma} The sequence $n_k$ is strictly increasing.
\end{lemma}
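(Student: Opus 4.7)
The plan is to apply a Sturm comparison argument, adapted to the Neumann boundary conditions and to the fact that the coefficient $h$ is singular at the two focal varieties. First I would rewrite the eigenvalue equation $u_j'' + h u_j' + \mu_j u_j = 0$ (for $j=k,k+1$) in Sturm-Liouville form by multiplying through by the integrating factor $p(t) := \exp\!\bigl(\int_c^t h(s)\,ds\bigr)$ for any fixed $c \in (0,t^{*})$, obtaining $(p u_j')' + \mu_j\, p\, u_j = 0$. From Lemma \ref{mean-curvature} one has $h(t) \sim (n-d_1-1)/t$ as $t\to 0^{+}$ and $h(t) \sim -(n-d_2-1)/(t^{*}-t)$ as $t\to t^{*-}$; since $f$ is proper, both exponents are $\geq 1$, and consequently $p$ vanishes at both endpoints (like $t^{n-d_1-1}$ near $0$ and like $(t^{*}-t)^{n-d_2-1}$ near $t^{*}$).

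Next I would introduce the Wronskian
$$W(t) := p(t)\bigl(u_k'(t)\,u_{k+1}(t) - u_k(t)\,u_{k+1}'(t)\bigr) = (p u_k')\,u_{k+1} - u_k\,(p u_{k+1}').$$
Using the two Sturm-Liouville equations a direct computation gives
\begin{equation*}
W'(t) \;=\; (\mu_{k+1} - \mu_k)\, p(t)\, u_k(t)\, u_{k+1}(t).
\end{equation*}
Since $u_k, u_{k+1} \in C^{2,\alpha}[0,t^{*}]$ satisfy the Neumann conditions $u_k'(0)=u_{k+1}'(0)=u_k'(t^{*})=u_{k+1}'(t^{*})=0$, the bracketed factor in $W$ tends to $0$ at the endpoints; combined with $p(0)=p(t^{*})=0$, this yields $W(0)=W(t^{*})=0$.

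Now let $0 < z_1 < \cdots < z_{n_k} < t^{*}$ be the zeros of $u_k$ in the open interval (they are simple by uniqueness for the ODE at interior points), and adjoin $z_0 := 0$, $z_{n_k+1} := t^{*}$. The key claim is that $u_{k+1}$ has at least one zero in each of the $n_k+1$ open subintervals $(z_i, z_{i+1})$. Assuming for contradiction that $u_{k+1}$ has no zero on some $(z_i,z_{i+1})$, both $u_k$ and $u_{k+1}$ have constant sign there, so the identity
\begin{equation*}
W(z_{i+1}) - W(z_i) \;=\; (\mu_{k+1} - \mu_k)\int_{z_i}^{z_{i+1}} p\,u_k\,u_{k+1}\, dt
\end{equation*}
has a right-hand side of determined sign. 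On the other hand, each boundary term $W(z_j)$ is either $0$ (if $z_j\in\{0,t^{*}\}$) or equals $p(z_j)u_k'(z_j)u_{k+1}(z_j)$, where the sign of $u_k'(z_j)$ is fixed by the fact that $u_k$ changes sign at the simple zero $z_j$. A direct sign inspection (alternating signs of $u_k$ on consecutive subintervals and of $u_k'$ at consecutive zeros) then contradicts the integral identity---this is the classical Sturm separation argument, with the Neumann endpoints playing the role of ``virtual zeros'' of $u_k$ thanks to $W(0)=W(t^{*})=0$. Since the $n_k+1$ subintervals are pairwise disjoint, $n_{k+1}\geq n_k+1$ and the lemma follows. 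The only delicate point is the justification of $W(0)=W(t^{*})=0$ in the presence of the singular coefficient $h$; it is secured by Lemma \ref{mean-curvature} together with the Neumann conditions and the $C^{2,\alpha}$-regularity of the eigenfunctions, and the rest is routine sign tracking.
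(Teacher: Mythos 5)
Your proof is correct, and it reaches the same conclusion by a genuinely different technical route. The paper's own argument is a ratio (Pr\"{u}fer-type) comparison: it works directly with $u_k'/u_k$ and $u_{k+1}'/u_{k+1}$, observes that both vanish at $t=0$, and invokes Sturm's comparison theorem to conclude $u_{k+1}'/u_{k+1} < u_k'/u_k$ on any interval where $u_{k+1}$ stays away from zero, from which it reads off a zero of $u_{k+1}$ in $(0,t_1)$, in $(t_{n_k},t^{*})$, and in each interior nodal interval. You instead pass to the Sturm--Liouville form $(p u')' + \mu\, p\, u = 0$ and run the classical Wronskian identity $W' = (\mu_{k+1}-\mu_k)\,p\,u_k u_{k+1}$. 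The benefit of your formulation is that the boundary behavior becomes completely transparent: the Neumann conditions force the bracket in $W$ to vanish at $0$ and $t^{*}$, and your observation that $p$ itself vanishes there (because the singularity of $h$ at the focal varieties has exponent $n-d_i-1 \geq 1$) gives a second, independent reason for $W(0)=W(t^{*})=0$. This lets you treat the two endpoints uniformly as virtual zeros of $u_k$ and dispose of all $n_k+1$ subintervals with a single sign-tracking argument, whereas the paper handles the two end subintervals and the interior subintervals with slightly different phrasings. Your version is somewhat longer but more explicit, and in particular it makes visible exactly where the hypothesis $d_i \leq n-2$ (properness) enters the boundary analysis. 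One small remark: $W(0)=W(t^{*})=0$ already follows from the Neumann conditions alone, since $u_k'u_{k+1}-u_k u_{k+1}'$ vanishes identically at the endpoints; the vanishing of $p$ is not strictly needed for that step, though it is harmless to record.
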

\begin{proof}
Let $u_k, u_{k+1}$ be solutions (resp.) of
\begin{align*}
u''_{k} + hu'_{k}+\lambda_{k}(q-1)u_{k}&=0,\\
u''_{k+1} + hu'_{k+1}+\lambda_{k+1}(q-1)u_{k+1}&=0.
\end{align*}
Recall that we have set $u_k (0)=1=u_{k+1} (0)$. In particular these solutions are
non-trivial and therefore if for any $t \in (0,t^* )$,  $u_k (t) =0$ then $u_k' (t) \neq 0$. 

Let $0<t_1<t_2<\cdots<t_{n_k} <t^*$, be the points such that  $u_k (t_i ) =0$, $i = 1, \dots ,n_k$.

Since $$\dfrac{u'_{k+1}(0)}{u_{k+1}(0)}=0=\dfrac{u'_k(0)}{u_k(0)},$$

\noindent
and $\lambda_{k+1} > \lambda_k$, 
if $u_{k+1}$ does not have zeros in $(0,t_1)$ then by Sturm's  comparison  theorem (and since the
functions $u_k$ and $u_{k+1}$ are linearly independent in any open interval)
$$\dfrac{u'_{k+1}}{u_{k+1}}  < \dfrac{u'_k}{u_k}\quad\text{in}\;(0,t_1).$$

Therefore there must be at least one value $s \in (0,t_1)$ such that $u_{k+1} (s)=0$. We can
also apply the same argument to show that $u_{k+1}$ must have another zero in $(t_{n_k} , t^* )$. 
Also  by standard Sturm's comparison we can see that the function $u_{k+1}$ has at least one zero in the interval
$(t_i , t_{i+1} )$, for each $i=1, \dots ,n_{k} -1$. Therefore $u_{k+1}$ has at least $n_k +1$ zeores in $(0,t^* )$, proving
the lemma.
\end{proof}

\section{Auxiliary results}

We assume that we have a proper isoparametric function $f$ on a closed Riemannian manifold $(M,g)$. The
dimension  of the focal submanifolds are $d_1 , d_2 \leq n-2$: we call $d=\min \{ d_1 , d_2 \} \leq n-2$. And we let
$p_f = \frac{n-d +2}{n-d-2}$, $p_f = \infty$ in case $d=n-2$.
We consider equation (\ref{ODE}) with $q<p_f$. The main goal of this section is to prove the next
proposition which we will need in the following sections.

\begin{proposition}\label{compact} Let $q \in (1, p_f )$ and fix positive numbers $\varepsilon <  \lambda^*$.
The set $C= \{ \phi \in \mathcal{B} : \phi$ is positive and solves equation (\ref{ODE}) with $\lambda \in [\varepsilon ,\lambda^* ] \}$
is compact in $\mathcal{B}$.

\end{proposition}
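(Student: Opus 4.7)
The plan is to establish a uniform $C^{2,\alpha}$ bound on all elements of $C$ and then conclude by Arzel\`a--Ascoli together with closedness of $C$ under $\mathcal{B}$-limits. The only non-routine step is a uniform $L^\infty$ bound, and this is exactly where the hypothesis $q<p_f$ enters.

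For the $L^\infty$ bound I would argue by contradiction via blow-up. Assume $\phi_k\in C$ with $M_k:=\phi_k(t_k)=\max\phi_k\to\infty$ and $\lambda_k\to\lambda_\infty\in[\varepsilon,\lambda^*]$. Set $\mu_k=M_k^{-(q-1)/2}\to 0$ and rescale $\psi_k(s):=\phi_k(t_k+\mu_k s)/M_k$; a direct computation gives
\[
-\psi_k''-\mu_k h(t_k+\mu_k s)\psi_k'+\mu_k^2\lambda_k\psi_k=\lambda_k\psi_k^q,\qquad 0<\psi_k\le 1=\psi_k(0).
\]
Passing to a subsequence, three cases arise. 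If both $t_k,t^*-t_k\gg\mu_k$, then $\mu_k h\to 0$ locally uniformly, so $\psi_k$ converges to a bounded positive solution $\psi$ of $-\psi''=\lambda_\infty\psi^q$ on $\mathbb{R}$; concavity together with boundedness force $\psi$ to be constant, hence $\psi\equiv 0$, contradicting $\psi(0)=1$. If $t_k/\mu_k\to s_\infty\in[0,\infty)$, Lemma \ref{mean-curvature} gives $\mu_k h(\mu_k\sigma)\to(n-d_1-1)/\sigma$ locally uniformly in $\sigma>0$; the shifted rescaling $\tilde\psi_k(\sigma):=\phi_k(\mu_k\sigma)/M_k$ then converges to a nontrivial nonnegative radial solution $\tilde\psi$ of $-\Delta\tilde\psi=\lambda_\infty\tilde\psi^q$ on $\mathbb{R}^{n-d_1}$; since $d\le d_1$ the exponent satisfies $q<p_f\le(n-d_1+2)/(n-d_1-2)$, so the Gidas--Spruck Liouville theorem forces $\tilde\psi\equiv 0$, contradicting $\tilde\psi(s_\infty)=1$. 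The case $(t^*-t_k)/\mu_k\to s_\infty$ is symmetric and uses $d_2$ in place of $d_1$.

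Given a uniform bound $\|\phi\|_\infty\le K$ on $C$, viewing $u=\phi\circ\mathbf{d}\in C^{2,\alpha}_f(M)$ as a solution of $-\Delta_g u+\lambda u=\lambda u^q$ on the closed manifold $M$ with bounded right-hand side, standard elliptic $L^p$ estimates followed by Schauder theory give a uniform $C^{2,\alpha}(M)$ bound on $u$, hence a uniform bound on $\phi$ in $\mathcal{B}$. For closedness: if $\phi_k\to\phi$ in $\mathcal{B}$ with $\lambda_k\to\lambda_\infty\in[\varepsilon,\lambda^*]$, then $\phi\ge 0$ solves the ODE with parameter $\lambda_\infty$; nontriviality follows since otherwise $v_k:=u_k/\|u_k\|_\infty$ would converge to a nonzero solution of $-\Delta_g v+\lambda_\infty v=0$ on the closed manifold $M$, impossible for $\lambda_\infty>0$; and strict positivity of $\phi$ then follows from the strong maximum principle applied to $-\Delta_g u+(\lambda-\lambda u^{q-1})u=0$. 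The main obstacle is the blow-up in the first step: correctly extracting the limit problem at each focal submanifold as a radial equation on $\mathbb{R}^{n-d_i}$ and recognising that $q<p_f$ is exactly the hypothesis that renders all three limiting problems subcritical, so that the appropriate Liouville theorem applies.
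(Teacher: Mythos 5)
Your proof is correct, but it follows a genuinely different route from the paper's. The paper exploits the ODE structure directly: a positive solution $\phi\in\mathcal{B}$ is determined by $\alpha=\phi(0)$ (or by $\beta=\phi(t^*)$), so a uniform bound on $(\alpha,\beta)$ plus continuous dependence on $(\lambda,\alpha)$ gives compactness. The uniform bound on $\alpha$ is obtained by blowing up \emph{at the endpoint} $t=0$: after rescaling, $\phi_i$ converges to the solution of $w''+\tfrac{n-d_1-1}{t}w'+w^q=0$, $w(0)=1$, $w'(0)=0$, and the Haraux--Weissler oscillation theorem (\cite{Haraux}, quoted via \cite{JC}) says this solution has arbitrarily many zeros once $q<\tfrac{n-d_1+2}{n-d_1-2}$, contradicting positivity. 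You instead blow up \emph{at the maximum point}, split into the three regimes, and invoke the Gidas--Spruck Liouville theorem (and a concavity argument for the interior regime) rather than an oscillation result. Both schemes are valid; they are in fact close cousins, since the oscillation of the radial ODE is precisely why the radial Liouville statement holds. What your version buys: it is the more standard PDE-style blow-up, it treats all three cases in one framework, and it makes explicit the closedness/nondegeneracy step (ruling out $\phi\equiv 0$ as a limit via the normalized sequence $u_k/\|u_k\|_\infty$), a point the paper's proof of Proposition 4.1 passes over rather quickly when it asserts $\alpha_0,\beta_0>0$. What the paper's version buys: once the shooting parameters are controlled, compactness is immediate from ODE uniqueness, with no need for elliptic estimates on $M$.

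Two small points worth flagging. First, in your focal-submanifold case you cite Gidas--Spruck for $\mathbb{R}^{n-d_i}$; when $d_i=n-2$ the transverse dimension is $2$, where the classical statement with exponent $\tfrac{N+2}{N-2}$ degenerates, so you should either note that the Liouville property still holds for every $q>1$ on $\mathbb{R}^2$, or fall back on the radial oscillation argument (as in \cite{Haraux}) which covers this uniformly. Second, in that same case the coefficient $\mu_k h(\mu_k\sigma)\to (n-d_i-1)/\sigma$ is singular at $\sigma=0$, so the locally uniform convergence is on compact subsets of $(0,\infty)$ and the regularity of the limit at the origin has to be recovered from the Neumann condition $\tilde\psi_k'(0)=0$; this is routine for radial equations but should be said.
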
 

We first consider the case when $\lambda$ is fixed:

\begin{lemma} 
Consider equation (\ref{ODE}) with $q\in (1,p_f )$ and $\lambda = \lambda_0 >0$ fixed. If $\phi_{\alpha}$ is the solution of the
initial value problem with $\phi '(0)=0$,
$\phi (0) =\alpha$,  then there exists $A=A(\lambda_0 ) >0$ such that if $\alpha \geq A$ then there exists
$t\in (0,t^* )$ such that $\phi_{\alpha} (t) =0$. Similarly, if  $\varphi_{\alpha}$ is the solution of 
(\ref{ODE}) with $\varphi ' (t^* )=0$, $\varphi  (t^* )= \alpha$, then there exists $B>0$ such that if $\alpha \geq B$ then there exists
$t\in (0,t^* )$ such that $\varphi_{\alpha} (t) =0$.
\end{lemma}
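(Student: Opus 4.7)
The proof will be a blow-up/rescaling argument that reduces equation (\ref{ODE}) near a focal submanifold to the radial equation for $-\Delta\psi = \lambda_0\psi^q$ on Euclidean space. I will do the case at $0$; the argument at $t^*$ is symmetric.

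First I would rescale: set $\beta = \alpha^{(q-1)/2}$ and write $\phi_\alpha(t) = \alpha\,\psi_\alpha(\beta t)$, $s = \beta t$. Substituting into (\ref{ODE}) and dividing by $\alpha^q$, the function $\psi_\alpha$ solves
\[
\psi_\alpha''(s) + \frac{h(s/\beta)}{\beta}\,\psi_\alpha'(s) + \lambda_0\,\psi_\alpha(s)^q - \frac{\lambda_0}{\beta^2}\,\psi_\alpha(s) = 0,\qquad \psi_\alpha(0)=1,\ \psi_\alpha'(0)=0,
\]
on $[0,t^*\beta]$. Setting $m := n-d_1\geq 2$, Lemma \ref{mean-curvature} gives $h(s/\beta)/\beta \to (m-1)/s$ locally uniformly on $(0,\infty)$ as $\alpha\to\infty$, while the linear term $\lambda_0/\beta^2$ tends to zero. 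The expected limit problem is therefore
\[
\psi_\infty''(s) + \frac{m-1}{s}\,\psi_\infty'(s) + \lambda_0\,\psi_\infty(s)^q = 0,\qquad \psi_\infty(0)=1,\ \psi_\infty'(0)=0,
\]
i.e., the radial form of $-\Delta\psi = \lambda_0\psi^q$ on $\re^m$.

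Because $d=\min\{d_1,d_2\}\leq d_1$ and the critical exponent $(m+2)/(m-2)$ is decreasing in $m$, the hypothesis $q<p_f$ gives $q<(m+2)/(m-2)$ (with any $q>1$ admissible when $m=2$). I would then invoke the standard fact that in this subcritical regime $\psi_\infty$ must reach zero at some finite $s_0>0$ with $\psi_\infty'(s_0)<0$: the energy $E(s) = \tfrac{1}{2}(\psi_\infty')^2 + \tfrac{\lambda_0}{q+1}\psi_\infty^{q+1}$ is non-increasing (so $\psi_\infty$ is bounded), $(s^{m-1}\psi_\infty')'=-\lambda_0 s^{m-1}\psi_\infty^q$ forces $\psi_\infty$ to be strictly decreasing as long as it is positive, and a Pohozaev-type identity (equivalently, the Gidas--Spruck non-existence theorem) rules out a positive entire solution.

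The third step is to establish $\psi_\alpha \to \psi_\infty$ in $C^1$ on $[0,s_0+\delta]$. Rewriting the equation in Volterra form using the integrating factor $s^{m-1}(1+O(s^2/\beta^2))$ built from the expansion of $h$ near the focal variety, one gets a uniform a priori bound on $(\psi_\alpha,\psi_\alpha')$ on $[0,s_0+\delta]$ from the energy monotonicity, equicontinuity on compact subsets of $(0,\infty)$, and convergence of the coefficient and forcing term in the integral equation. Arzel\`a--Ascoli and uniqueness for the limit problem then give convergence of the full family. Hence $\psi_\alpha$ has a first zero $s(\alpha)$ close to $s_0$ for all sufficiently large $\alpha$, so $\phi_\alpha$ vanishes at $t(\alpha)=s(\alpha)/\beta \to 0$, which lies in $(0,t^*)$ for all $\alpha\geq A(\lambda_0)$ large enough. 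The statement for $\varphi_\alpha$ follows identically, rescaling around $t=t^*$ and using the companion asymptotic $\lim_{t\to t^*}(t-t^*)h(t) = n-d_2-1$, the limit problem now living on $\re^{n-d_2}$ with $q<p_f\leq (n-d_2+2)/(n-d_2-2)$.

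The main obstacle I expect is the rigorous justification of the continuous-dependence step, since the ODE has a regular singular point at $s=0$ and standard continuous-dependence theorems do not directly apply. The cleanest treatment is to split $[0,s_0+\delta]$ into an initial interval $[0,\eta]$, where uniform control of the Volterra solution follows from the explicit asymptotics of $h$ and a small fixed-point argument in a weighted norm, and the remaining interval $[\eta,s_0+\delta]$, where the coefficients converge uniformly and classical ODE stability applies.
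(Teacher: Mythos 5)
Your proof is correct, but it reconstructs the argument that the paper delegates to a citation. The paper's proof of this lemma is two lines: it rewrites equation (\ref{ODE}) as $\phi''(r)+\frac{H(r)}{r}\phi'(r)+\lambda\phi^q-\lambda\phi=0$ with $H(0)=n-d_1-1$ (from Lemma \ref{mean-curvature}), checks that $q<p_f$ forces $\frac{H(0)+1}{2}\leq\frac{n-d}{2}=\frac{p_f+1}{p_f-1}<\frac{q+1}{q-1}$ (trivially satisfied when $d_1=n-2$), and then invokes Theorem 3.1 of Fernandez--Petean \cite{JC}, which is exactly the statement you prove. Your blow-up argument --- rescale by $\beta=\alpha^{(q-1)/2}$, pass to the limit Emden equation $\psi''+\frac{m-1}{s}\psi'+\lambda_0\psi^q=0$ on $\re^{m}$ with $m=n-d_1$, kill a positive entire radial solution by Pohozaev/Gidas--Spruck because $q<p_f\leq\frac{m+2}{m-2}$, and transfer the transversal zero back by uniform convergence on compacts --- is precisely the strategy of \cite[Theorem 3.1]{JC}, and the paper itself performs this same rescaling and cites \cite[Lemma 3.2]{JC} and \cite[Proposition 3.9]{Haraux} when it later proves the uniform-in-$\lambda$ version (Lemma~\ref{odecompa}). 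So the content matches; the difference is only that the paper outsources the analysis, while you keep it self-contained. Your concern about continuous dependence through the regular singular point at $s=0$ is the right thing to flag; the two-interval Volterra/fixed-point treatment you sketch is the standard and correct way to handle it, and it is what is carried out in \cite{JC}.
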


\begin{proof} We consider the first statement, the proof of the second statement is similar.
Equation (\ref{ODE}) can be written as 

\begin{equation}\label{ODE2}
\phi '' (r)+ \frac{H(r)}{r} \phi ' (r)+ \lambda \phi^{q}(r) - \lambda\phi (r)= 0.
\end{equation}

\noindent
where $H(r)= rh(r)$ and $H(0)=n-d_1 -1$ by Lemma (\ref{mean-curvature}). 
If $d_1 = n-2$ then $H(0) =1$. If $d_1 <n-2$ then 

$$ \frac{H(0) +1}{2} =\frac{n-d_1}{2} \leq \frac{n- {\bf d}}{2} = \frac{p_f +1}{p_f -1}  < \frac{q+1}{q-1}.$$

Then we can apply \cite[Theorem 3.1]{JC} which says that under the previous  conditions on $H(0)$ and $q$
there exists $A>0$ such that if $\alpha \geq A$ then the solution $\phi_{\alpha}$ has a zero. 

\end{proof}

\begin{corollary} Fix $\lambda =\lambda_0 >0$ in equation (\ref{ODE}). There exits $A>0$ such that if $\phi  \in \mathcal{B}$ is a positive solution of  equation (\ref{ODE}) then $\phi \leq A$.
\end{corollary}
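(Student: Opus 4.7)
The plan is to argue by contradiction via a blow-up analysis at the maximum. Suppose the conclusion fails: there is a sequence $\{\phi_k\}\subset\mathcal{B}$ of positive solutions of (\ref{ODE}) at $\lambda=\lambda_0$ with $M_k:=\max_{[0,t^*]}\phi_k\to\infty$, attained at $t_k\in[0,t^*]$. The previous Lemma, applied at both focal submanifolds (using also its symmetric ``shooting from $t^*$'' statement), gives universal bounds $\phi_k(0)<A_1$ and $\phi_k(t^*)<A_2$, so for all large $k$ the maximum is attained in the interior: $t_k\in(0,t^*)$ with $\phi_k'(t_k)=0$.

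The next step is the standard Emden--Fowler rescaling
\[
\tilde\phi_k(\sigma) := \frac{1}{M_k}\,\phi_k\!\left(t_k+\sigma M_k^{-(q-1)/2}\right),\qquad \sigma\in I_k := \bigl[-t_kM_k^{(q-1)/2},\,(t^*-t_k)M_k^{(q-1)/2}\bigr],
\]
giving $\tilde\phi_k(0)=1$, $\tilde\phi_k'(0)=0$, $0<\tilde\phi_k\leq 1$, and
\[
\tilde\phi_k''+\tilde h_k(\sigma)\tilde\phi_k'+\lambda_0\tilde\phi_k^{\,q}-\lambda_0 M_k^{1-q}\tilde\phi_k = 0,\qquad \tilde h_k(\sigma) = M_k^{-(q-1)/2}\,h\!\left(t_k+\sigma M_k^{-(q-1)/2}\right).
\]
Along a subsequence, set $T_0:=\lim t_kM_k^{(q-1)/2}$ and $S_0:=\lim(t^*-t_k)M_k^{(q-1)/2}$ in $[0,\infty]$.

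If $T_0=S_0=\infty$ (interior blow-up), $I_k$ exhausts $\re$ and $h$ is bounded on compact subsets of $(0,t^*)$, so $\tilde h_k\to 0$ locally uniformly; the $C^2_{\mathrm{loc}}$ limit $\tilde\phi:\re\to[0,1]$ satisfies $\tilde\phi''+\lambda_0\tilde\phi^{\,q}=0$ with $\tilde\phi(0)=1$, $\tilde\phi'(0)=0$. The first integral $(\tilde\phi')^2=\frac{2\lambda_0}{q+1}(1-\tilde\phi^{q+1})$ forces $\tilde\phi$ to reach zero at finite time with non-vanishing derivative, hence continue into negative values, contradicting $\tilde\phi\geq 0$. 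Otherwise, say $T_0<\infty$ (the case $S_0<\infty$ is symmetric, with $d_2$ replacing $d_1$): I would re-center the rescaling at the origin, $\hat\phi_k(\sigma) := M_k^{-1}\phi_k(\sigma M_k^{-(q-1)/2})$, and use the asymptotic $t h(t)\to n-d_1-1$ from Lemma~\ref{mean-curvature} to pass to a $C^2_{\mathrm{loc}}$ limit $\hat\phi\in C^2([0,\infty))$ solving the radial Lane--Emden equation
\[
\hat\phi''+\frac{n-d_1-1}{\sigma}\hat\phi'+\lambda_0\hat\phi^{\,q}=0,\qquad\hat\phi'(0)=0,\qquad 0\leq\hat\phi\leq 1,
\]
with $\hat\phi_k(t_kM_k^{(q-1)/2})=1$. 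Since $\phi_k(0)<A_1$ and $M_k\to\infty$, $\hat\phi(0)=0$; integrating $(\sigma^{n-d_1-1}\hat\phi')'=-\lambda_0\sigma^{n-d_1-1}\hat\phi^{\,q}\leq 0$ from $0$ gives $\hat\phi'\leq 0$, and combined with $\hat\phi(0)=0$ and $\hat\phi\geq 0$ this forces $\hat\phi\equiv 0$, contradicting $\max\hat\phi_k = 1$.

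The main technical obstacle is the rigorous extraction of these $C^2_{\mathrm{loc}}$ limits, especially in the boundary case where the coefficient of $\hat\phi_k'$ is singular at $\sigma=0$. This is handled by multiplying the equation by $\sigma^{n-d_1-1}$ and integrating: since $\hat\phi_k'(0)=0$ exactly and $0\leq\hat\phi_k\leq 1$, one obtains $|\hat\phi_k'(\sigma)|\leq C\sigma$ on bounded sets, providing the equicontinuity needed for Arzel\`a--Ascoli. Observe that the subcritical hypothesis $q<p_f$ is used only indirectly, through the Lemma that yields the endpoint bounds on $\phi_k(0)$ and $\phi_k(t^*)$; as an alternative one could invoke the Gidas--Spruck Liouville theorem in $\re^{n-d_1}$ directly, for which the condition $q<p_f\leq p_{n-d_1}$ is precisely subcriticality.
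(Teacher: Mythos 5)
Your proof is correct, but it takes a considerably heavier route than the paper. Both arguments open identically, by invoking the preceding Lemma (and its mirror statement at $t^*$) to obtain uniform bounds on $\phi(0)$ and $\phi(t^*)$. From that point the paper simply treats (\ref{ODE}) as a singular initial value problem shot from either endpoint: with $\phi'(0)=0=\phi'(t^*)$ fixed and the initial values confined to a bounded set, continuous dependence on the initial datum (together with the fact that a positive solution cannot blow up at an interior time, which follows from an elementary energy/Gronwall estimate on $[\delta,t^*-\epsilon]$ where $h$ is bounded) yields a uniform bound on $[0,t^*-\epsilon]$ and, symmetrically, on $[\epsilon,t^*]$; these two compact pieces cover $[0,t^*]$, and no rescaling or Liouville theorem is needed. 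You instead re-run a full Emden--Fowler blow-up analysis at the interior maximum, splitting into an interior limit and a re-centered boundary limit. This is a sound and self-contained alternative — it is precisely the mechanism that the paper delegates to the Lemmas 4.2/4.7 — but it reproves by hand what the endpoint bound already buys. Two small points worth tightening. First, in the interior case your justification that $\tilde h_k\to 0$ locally uniformly ("$h$ is bounded on compact subsets of $(0,t^*)$") is not enough when $t_k\to 0$ or $t_k\to t^*$; the correct observation is that $t\,h(t)$ stays bounded by Lemma~\ref{mean-curvature}, whence
\[
\tilde h_k(\sigma)=\frac{\bigl(t_k+\sigma M_k^{-(q-1)/2}\bigr)\,h\!\bigl(t_k+\sigma M_k^{-(q-1)/2}\bigr)}{t_kM_k^{(q-1)/2}+\sigma}\longrightarrow 0
\]
whenever $T_0=S_0=\infty$. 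Second, the contradiction in the interior case is available even more cheaply than through the first integral: $\tilde\phi''=-\lambda_0\tilde\phi^{\,q}<0$ makes $\tilde\phi$ strictly concave, and a strictly concave function bounded below on all of $\re$ does not exist.
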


\begin{proof} It follows from the previous lemma that
 we can find $C>0$ such that  any positive solution in $\mathcal{B}$ is bounded by $C$ in
$0$ and in $t^*$. But then for any $\epsilon >0$ there exists a constant $A>0$ such that the solution
must be bounded by $A$ in 
$[0,  t^* - \epsilon ]$ and  in $[\epsilon , t^* ]$. 
\end{proof}

\begin{corollary} Fix $\lambda =\lambda_0 >0$ in equation (\ref{ODE}). The set of  $\phi  \in \mathcal{B}$ such that $\phi$ is a positive solution of  equation (\ref{ODE}) is compact.
\end{corollary}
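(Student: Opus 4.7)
The plan is to use the identification $\phi \leftrightarrow u = \phi \circ {\bf d}$ from Lemma 2.2 to transport the problem to the closed manifold $M$, upgrade the $L^\infty$ bound of Corollary 4.3 to a uniform $C^{2,\alpha}$ bound via standard elliptic theory, and finally ensure that the limit of a convergent subsequence is still a \emph{positive} solution.

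Given a sequence $\{\phi_n\}$ of positive solutions with $\lambda=\lambda_0$, set $u_n = \phi_n\circ {\bf d}\in C^{2,\alpha}_f(M)$. Each $u_n$ is a positive solution of $-\Delta_g u_n + \lambda_0 u_n = \lambda_0 u_n^q$ on $M$, and by Corollary 4.3 we have $\|u_n\|_{L^\infty(M)}\le A$ uniformly. A preliminary observation is a uniform \emph{positive} lower bound on $u_n$: integrating the equation on the closed manifold yields
$$\int_M u_n\,dV_g = \int_M u_n^q\,dV_g$$
for every $n$, so $\|u_n\|_\infty<1$ would give $u_n^q<u_n$ pointwise, contradicting this identity. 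Hence $\sup u_n\ge 1$, and the Harnack inequality applied to the linear equation $-\Delta_g u_n + \lambda_0(1-u_n^{q-1})u_n=0$ (whose coefficients are uniformly bounded in $L^\infty$) yields $\inf_M u_n\ge c>0$ uniformly in $n$.

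With $u_n$ uniformly bounded above and below away from zero, the right-hand side of the PDE is uniformly bounded in $L^\infty(M)$; $L^p$-estimates together with Sobolev embedding give uniform $C^{1,\beta}(M)$ bounds, and then a Schauder bootstrap (using that $t\mapsto t^q$ is smooth on $[c,A]$) yields a uniform bound on $\|u_n\|_{C^{3,\alpha}(M)}$. The compact embedding $C^{3,\alpha}(M)\hookrightarrow C^{2,\alpha}(M)$ extracts a subsequence converging in $C^{2,\alpha}(M)$ to a limit $u^*$, which is automatically $f$-invariant (a pointwise condition preserved under uniform convergence), solves the same equation, and satisfies $u^*\ge c>0$. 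Transporting back via Lemma 2.2 gives $\phi^*\in\mathcal{B}$ in our set, proving compactness.

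The main obstacle in this plan is precisely ruling out the possibility that positive solutions accumulate at the trivial solution $u\equiv 0$; this is handled by the integral identity $\int u_n=\int u_n^q$ and the resulting bound $\sup u_n\ge 1$, which is the only conceptually non-routine ingredient. Everything else is the standard regularity-plus-Arzel\`a-Ascoli machinery on a closed Riemannian manifold, with no special role played by the isoparametric structure at this stage beyond the identification in Lemma 2.2.
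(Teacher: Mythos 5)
Your proof is correct, but it takes a genuinely different route from the paper's. The paper works directly at the ODE level: Corollary 4.3 (via Lemma 4.2) bounds $\phi_i(0)$ and $\phi_i(t^*)$, so after passing to a subsequence these converge, and continuous dependence of ODE solutions on the data $(\phi(0),\phi'(0)=0)$ at one end and $(\phi(t^*),\phi'(t^*)=0)$ at the other produces the limit $\phi^1=\phi^2\in\mathcal{B}$ directly, with no Harnack inequality or elliptic estimates. You instead lift to the manifold and run the elliptic-regularity-plus-Arzel\`a--Ascoli machinery, which is heavier but perfectly sound for the same conclusion.

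The real value of your approach is the uniform positive lower bound $u_n\ge c>0$, obtained from the integral identity $\int_M u_n=\int_M u_n^q$ combined with the Harnack inequality for the linear equation with bounded coefficient $\lambda_0(1-u_n^{q-1})$. The paper's proof does not visibly address positivity of the limit: nothing in Lemma 4.2 or Corollary 4.3 prevents $\phi_i(0)\to 0$, and in that case the $\phi^1$ produced by continuous dependence would be $\phi^1\equiv 0$ (by uniqueness for the singular initial value problem at $t=0$), which is \emph{not} a positive solution and lies outside the set whose compactness is being asserted. So your lower bound is not merely an alternative ingredient --- it fills an actual gap in the paper's argument. One small point you should state explicitly: the last step uses that the identification of Lemma 2.2 is a homeomorphism (not just a bijection) between $\mathcal{B}$ and $C^{2,\alpha}_f(M)$, so that convergence of $u_n$ in $C^{2,\alpha}(M)$ transfers to convergence of $\phi_n$ in $\mathcal{B}$; this requires a (short) verification near the focal submanifolds where the Fermi coordinates degenerate.
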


\begin{proof}
Let $\phi_i  \in \mathcal{B}$ be sequence of positive solutions of  equation (\ref{ODE}). Then it follows  from
the lemma that we can take a subsequence so that the sequences $\phi_i (0)$, $\phi_i (t^* )$ are convergent. 
Let $\lim_{i \rightarrow \infty} \phi_i (0) = {\bf a}$,  $\lim_{i \rightarrow \infty} \phi_i (t^* ) = {\bf b}$.
But then we consider the solutions 
$\phi^1$, $\phi^2 $ of 
equation  (\ref{ODE}) with  $\phi^1 (0) ={\bf a}$, ${\phi^1}' (0) =0$, $\phi^2 (t^* ) ={\bf b}$, ${\phi^2}'  (t^* ) =0$.  Then
for any $\varepsilon >0$ small, $\phi_i$ converges in $[0, t^* -\varepsilon ]$ to $\phi^1$ and $\phi_i$ converges in $[\varepsilon , t^* ]$ to $\phi^2$. Then $\phi^1 =\phi^2$ and give a function in $ \mathcal{B}$ 
which is  a positive solution of  equation (\ref{ODE}). And (for the subsequence) 
$\lim_{i\rightarrow \infty} \phi_i = \phi^1 =\phi^2$. 
\end{proof}

\begin{remark} If $\phi  \in \mathcal{B}$ is a non-trivial positive solution of  equation (\ref{ODE}), then
$\# \{ t : \phi (t) =1 \} < \infty$, and there is an open neighborhood $U$ of $\phi \in  \mathcal{B}$ such that for
any $\varphi \in U$, $\# \{ t : \varphi (t) =1 \}  = \# \{ t : \phi (t) =1 \}$. Also if $\phi  \in \mathcal{B}$ is a non-trivial positive solution of  equation (\ref{ODE}) close to the trivial solution then $\# \{ t : \varphi (t) =1 \}$ is equal to the number
of zeroes of the linearized equation at the trivial solution (which is finite).
\end{remark}

\begin{corollary} Fix $\lambda =\lambda_0 >0$ in equation (\ref{ODE}). There exits $k_0 >0$ such that if $\phi \in \mathcal{B}$ is a positive solution of  equation (\ref{ODE}) then $\# \{ t : \phi (t) =1 \} \leq k_0 $.
\end{corollary}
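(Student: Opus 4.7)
My plan is to argue by contradiction, combining the compactness of the solution set established above with the local constancy of the level-1 count provided in the previous remark. Suppose the conclusion fails; then I obtain a sequence $(\phi_i) \subset \mathcal{B}$ of positive (necessarily non-trivial) solutions of (\ref{ODE}) with $N_i := \#\{t \in [0,t^*] : \phi_i(t) = 1\} \to \infty$. Each $N_i$ is a priori finite: at any point where $\phi_i - 1$ and $\phi_i'$ both vanish, standard ODE uniqueness gives $\phi_i \equiv 1$, contradicting non-triviality, and at the endpoints $0$ and $t^*$ this uniqueness still holds for functions in $\mathcal{B}$ via the singular initial-value problem used in the proof of Theorem \ref{LocalBifurcationTheorem}.

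Next I invoke the preceding compactness corollary to extract a subsequence (still denoted $\phi_i$) converging in $\mathcal{B}$ to some positive solution $\phi_\infty \in \mathcal{B}$ of (\ref{ODE}), and split into cases according to whether $\phi_\infty$ is trivial. If $\phi_\infty$ is non-trivial, the first half of the preceding remark furnishes a $\mathcal{B}$-neighborhood $U$ of $\phi_\infty$ on which the level-1 count is constant and equal to $\#\{t : \phi_\infty(t)=1\} < \infty$; for $i$ large we have $\phi_i \in U$, so $N_i$ is eventually constant, contradicting $N_i \to \infty$. If instead $\phi_\infty \equiv 1$, I apply the second clause of the remark: for $i$ large, $\phi_i$ is a non-trivial solution arbitrarily close to the constant solution $1$, so $N_i$ equals the finite number of zeros in $[0,t^*]$ of the linearization of (\ref{ODE}) at $\phi \equiv 1$, again contradicting $N_i \to \infty$.

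All the substantive analytic input — the $L^\infty$ bound, compactness of the solution set, and the rigidity of the level-1 count under $C^{2,\alpha}$ perturbations — has already been packaged in the preceding lemmas and remark, so no new estimates are needed. The main point of the argument, and the step I expect to require the most care, is to handle the trivial-limit case cleanly, since there the count $\#\{\phi_\infty(t) = 1\}$ itself is not finite; this is exactly what the second half of the preceding remark was formulated to address, and it is the only place where the dichotomy between trivial and non-trivial limit actually has to be used in an essential way.
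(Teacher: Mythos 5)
Your argument is correct and is essentially the paper's own: extract a convergent subsequence via the preceding compactness corollary and then invoke both clauses of the remark (the locally constant level-1 count near a non-trivial limit, and the count given by the linearization near the trivial limit). The paper's proof is a two-line version of exactly this, simply saying ``by the remark $\#\{t:\phi_i(t)=1\}$ is bounded''; you have made the implicit case split between trivial and non-trivial limits explicit, which is where the content of the remark is actually used.
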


\begin{proof}
Any sequence of positive solutions  $\phi_i \in \mathcal{B}$ of  equation (\ref{ODE}) must have a convergent 
subsequence. Then by the remark  $\# \{ t : \phi_i (t) =1 \}$ is bounded (independently of $i$). 
\end{proof}

Now as in the proposition we will fix positive numbers $\varepsilon  <\lambda^*$ and consider the
equation (\ref{ODE}) with $\lambda \in [\varepsilon , \lambda^* ]$.

\begin{lemma}\label{odecompa} For any $0 < \varepsilon  < \lambda^* $
there exists $A>0$ such that if $\lambda \in [\varepsilon ,\lambda^* ]$ and 
$\phi_{\alpha}$ is the solution of equation (\ref{ODE}) with $\phi_{\alpha}' (0) =0$, 
$\phi_{\alpha}  (0) = \alpha >A$ then $\phi_{\alpha}$ 
has a zero in $(0,t^* )$. Similarly, there exists $B>0$ such that if 
$\varphi_{\alpha}$ is the solution of equation (\ref{ODE}) with $\varphi_{\alpha}' (t^* ) =0$, 
$\varphi_{\alpha}  (t^*) = \alpha >B$, then $\varphi_{\alpha}$ 
has a zero in $(0,t^* )$.
\end{lemma}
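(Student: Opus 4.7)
The plan is to upgrade Lemma 4.2 to a statement uniform in $\lambda$ by a blow-up/rescaling argument. I would argue by contradiction: suppose the first claim fails, so there exist sequences $\alpha_i \to \infty$ and $\lambda_i \in [\varepsilon, \lambda^*]$ such that $\phi_i := \phi_{\alpha_i}$ (the solution of (\ref{ODE}) at $\lambda = \lambda_i$) is strictly positive on $[0, t^*]$. Passing to a subsequence we may assume $\lambda_i \to \lambda_0 \in [\varepsilon, \lambda^*]$. The key step is then the rescaling
\begin{equation*}
\psi_i(s) := \phi_i\bigl(\alpha_i^{-(q-1)/2} s\bigr)/\alpha_i,
\end{equation*}
defined on the expanding interval $[0, T_i]$ with $T_i := \alpha_i^{(q-1)/2} t^* \to \infty$. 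A direct computation shows each $\psi_i$ solves
\begin{equation*}
\psi_i'' + \alpha_i^{-(q-1)/2}\, h\!\bigl(\alpha_i^{-(q-1)/2} s\bigr)\, \psi_i' + \lambda_i \psi_i^q - \lambda_i \alpha_i^{-(q-1)} \psi_i = 0,
\end{equation*}
with $\psi_i(0) = 1$, $\psi_i'(0) = 0$, and $\psi_i > 0$ on $[0, T_i]$.

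Next I would pass to the limit $i \to \infty$. By Lemma \ref{mean-curvature} and the smooth expansion from \cite{Ge}, $h(r) = (n-d_1-1)/r + O(1)$ as $r \to 0$, so $\alpha_i^{-(q-1)/2} h(\alpha_i^{-(q-1)/2} s) \to (n-d_1-1)/s$ uniformly on compact subsets of $(0,\infty)$, while $\lambda_i \alpha_i^{-(q-1)} \to 0$. Standard continuous dependence for the regular singular Cauchy problem then yields $\psi_i \to \psi_\infty$ in $C^2_{\mathrm{loc}}([0,\infty))$, where $\psi_\infty$ solves
\begin{equation*}
\psi_\infty'' + \frac{n-d_1-1}{s}\,\psi_\infty' + \lambda_0 \psi_\infty^q = 0, \qquad \psi_\infty(0)=1,\ \psi_\infty'(0)=0.
\end{equation*}
This is the radial Emden--Fowler equation on $\mathbb{R}^{n-d_1}$. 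Since $d_1 \geq d = \min\{d_1,d_2\}$, one has $p_{n-d_1} \geq p_f > q$, so the equation is strictly subcritical; by the classical Emden--Fowler theory (or by applying Lemma 4.2 directly on $[0,\infty)$ to this limit ODE) $\psi_\infty$ admits a first zero at some finite $s_0 > 0$ and becomes strictly negative on a right neighborhood $(s_0, s_0 + \delta)$.

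To conclude, since $T_i \to \infty$ the interval $[0, s_0+\delta]$ sits inside $[0,T_i]$ for all large $i$, but $\psi_i > 0$ there while $C^2_{\mathrm{loc}}$ convergence forces $\psi_\infty \geq 0$ on the same interval, a contradiction. The second statement is obtained by the symmetric rescaling near $t^*$, with $n-d_1$ replaced by $n-d_2$ and using $h(t) \sim (n-d_2-1)/(t^*-t)$; the inequality $q < p_f \leq p_{n-d_2}$ again supplies subcriticality. The main obstacle I expect is the justification of $C^2_{\mathrm{loc}}$ convergence through the regular singular point at $s=0$: this requires the existence/uniqueness theory for singular Cauchy problems (already used tacitly in Section 3) combined with the boundedness of the remainder $h(r) - (n-d_1-1)/r$ near $r = 0$ coming from the expansion cited in Lemma \ref{mean-curvature}.
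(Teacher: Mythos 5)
Your proof is correct and follows essentially the same blow-up argument as the paper: rescale so that the rescaled solution has unit initial data, pass to the limit to obtain the subcritical Emden--Fowler equation with coefficient $(n-d_1-1)/s$, and derive a contradiction from the existence of a zero for that limit equation (the paper cites Proposition 3.9 of Haraux--Weissler for this last step and Lemma 3.2 of the reference [JC] for the locally uniform convergence through the regular singular point, which is exactly the technical point you flag). The only cosmetic difference is that the paper absorbs the factor $\sqrt{\lambda_i}$ into the domain rescaling so the limit equation is $w''+\frac{H(0)}{t}w'+w^q=0$ with no $\lambda_0$, whereas you leave $\lambda_0$ in front of $\psi_\infty^q$; this is immaterial.
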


\begin{proof} We will prove the first statement, the proof of the second statement is similar. For each $\lambda \in [\varepsilon,\lambda^* ]$ let $A_{\lambda} = \inf \{ A : \phi_{\alpha}$ has a zero in
$(0,t^* ) $ for avery  $\alpha > A \}$.
It follows from the previous lemma that $A_{\lambda} < \infty$ for any $\lambda \in [\varepsilon,\lambda^* ]$. Assume that there
exists a sequence $\lambda_i \in [\varepsilon , \lambda^* ]$ such that $A_{\lambda_i} \rightarrow \infty$. We can
asume that
$\lambda_i \rightarrow \lambda_0 \in [\varepsilon , \lambda^* ]$.
Then we
have a solution $\phi_i$ of equation (\ref{ODE}) with
$\lambda = \lambda_i$, such that  $\phi_i (0) = \alpha_i^{\frac{2}{q-1}} \rightarrow \infty$, and $\phi_i$ is positive in
$[0,t^* )$. Then we argue as in \cite[Theorem 3.1]{JC}
(see also \cite[Proposition 3.8]{Haraux}): We let 

$$w_i (t) = \alpha_i^{\frac{2}{1-q}}  \phi_i \left(  \frac{t}{\alpha_i\sqrt{\lambda_i}    } \right) .$$
Then $w_i$ solves 
$$w_i^{\prime\prime}(t)+\dfrac{H\left(\frac{t}{\alpha_i\sqrt{\lambda_i}}\right)}{t}w^{\prime}_i(t) + w_i^{q}(t)-\dfrac{w_i(t)}{\alpha_i^2}=0,$$

\noindent
where $H(r)= rh(r)$, $w_i(0)=1$ and $w^{\prime}_i(0)=0$. Note that $w_i$ is defined in
$[0, \alpha_i \sqrt{\lambda_i } t^* )$, and $\lim_{i \rightarrow \infty} \alpha_i \sqrt{\lambda_i } t^* =\infty$.

Then one can see that for any fixed $K>0$ $w_i$ converges uniformly on $[0,K]$ to the solution $w$
of 

$$w'' + \frac{H(0)}{t} w' + w^q =0,$$

\noindent
with $w(0)=1$, $w'(0)=0$. The proof is the same as in  \cite[Lemma 3.2]{JC}), where the proof is detailed in
the case $\lambda_i = \lambda >0$, instead of $\lambda_i \rightarrow \lambda$ as in our case
(but this does not affect the proof of the statement given in \cite{JC}).

It is proved in \cite[Proposition 3.9]{Haraux} that by picking $K$ large we can assume that $w$ has any number of zeroes in
$[0,K]$ and by the uniform convergence it follows that $w_i$ must have a zero in $[0,K]$ and
therefore $\phi_i$ has in zero in $(0, \frac{K}{\alpha_i \sqrt{\lambda_i }} )$. This is a contradiction, and therefore
$A=\sup_{\lambda \in [\varepsilon, \lambda^* ]} A_{\lambda} < \infty$, proving the lemma.

\end{proof}

We can now prove Proposition 4.1:

\begin{proof} Let $\phi_j \in \mathcal{B} $ be a sequence of solutions of equation(\ref{ODE})
with $\lambda = \lambda_j \in [\varepsilon ,\lambda^* ]$. $\phi_j$ is determined by $\alpha_j =\phi_j (0)$
and by $\beta_j = \phi_j (t^* )$. 
From the previous lemma we know that we can take a 
 subsequence and assume that $(\lambda_j , \alpha_j , \beta_j ) \rightarrow (\lambda_0 ,  \alpha_0 ,\beta_0 )$,
where $\alpha_0$, $\beta_0 >0$. 
Let $\phi^1$ be the solution of equation (\ref{ODE}) with $\lambda =\lambda_0$ such that
$\phi^1 (0) = \alpha_0$ and ${\phi^1}' (0)=0$. 
Let $\phi^2$ be the solution of equation (\ref{ODE}) with $\lambda =\lambda_0$ such that
$\phi^2 (t^* ) = \beta_0$ and ${\phi^2}' (t^* )=0$. 
Then for any $\delta >0$ $\phi_j$ converges on $[0, t^* -\delta ]$ to $\phi^1$ and
on $[\varepsilon , t^* ]$ to $\phi^2$. It follows that on $(0, t^* )$ $\phi^1 = \phi^2$ and therefore 
they define a function  $\phi \in \mathcal{B} $ which is positive, solves
equation (\ref{ODE}), and verfies $\phi (0)= \alpha_0$, $\phi (t^* ) =\beta_0 $. And $\phi_i \rightarrow \phi$.

\end{proof}

\section{\texorpdfstring{$f$}{}-invariant solutions of equation (\ref{equation}) for $\lambda$ close to zero}

In this section we will prove Theorem 1.2: i.e. we will show that all non-negative $f-$invariant solutions of \ref{equation} are constant for $\lambda$ close to zero. In order to get this result we will give an apriori estimate for $f-$invariant solutions of the
equation. First consider the equivalent equation: 

\begin{equation}\label{eqconstant}
-\Delta_g w=w^{q}-\lambda w.
\end{equation}

Note that $u$ is a solution of equation \ref{eqconstant} if and only if $\lambda^{\frac{-1}{q-1}} u$ is a solution
of equation \ref{equation}. Also, as in section 2, $u \in C^{2,\alpha}_f (M)$, $ u(x)=\phi({\bf d}(x))$, with $\phi \in \mathcal{B}$ is a solution of equation
(\ref{eqconstant}) if and only if the function $\phi$ satisfies 

\begin{equation}\label{ODE2}
-(\phi ''+  h \phi ') +\lambda\phi= \phi^{q }.
\end{equation}

\noindent 
on $[0,t^* ]$. 

\vspace{.5cm}

Similar problems have been considered before, for instance in \cite{Licois}. We denote $\lambda_1$  the first non-zero eigenvalue of $-\Delta_g$.  We will make use of
the following result from \cite[Theorem 2.2]{Licois}

\begin{theorem}[\cite{Licois}]
Assume $0 < \lambda$ and $q>1$. If $w$ is solution of (\ref{eqconstant}) which satifies $$q \Vert w\Vert^{\frac{1}{q-1}}_{L^{\infty}}\leq \lambda+\lambda_1  , $$
then $w=\lambda^{\frac{1}{q-1}}.$

Similarly, if  $u$ is solution of (\ref{equation}) which satifies $$q \Vert 
u\Vert^{\frac{1}{q-1}}_{L^{\infty}}  \leq  \frac{  \lambda+\lambda_1 }{\lambda^{\frac{1}{(q-1)^2 }}}, $$
then $u= 1.$

\end{theorem}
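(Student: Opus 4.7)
The plan is to reduce the statement to showing that the difference $\varphi = w - w_*$, where $w_* = \lambda^{1/(q-1)}$ is the unique positive constant solution of (\ref{eqconstant}), satisfies $\varphi \equiv 0$. First I would multiply the equation $-\Delta w = w^q - \lambda w$ by the test function $\varphi$ itself and integrate by parts over $M$. Using the identity $w_*^q = \lambda w_*$ to rewrite the nonlinear term, one gets
\begin{equation*}
\int_M |\nabla \varphi|^2 + \lambda \int_M \varphi^2 \;=\; \int_M (w^q - w_*^q)\,\varphi .
\end{equation*}

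Next I would invoke the mean value theorem to write $w^q - w_*^q = q \xi^{q-1} \varphi$ pointwise, where $\xi(x)$ lies between $w(x)$ and $w_*$. A standard maximum principle argument (evaluating $-\Delta w = w(w^{q-1}-\lambda)$ at a maximum point of $w$ forces $\|w\|_{L^{\infty}} \geq w_*$) then ensures that $\xi \leq \|w\|_{L^{\infty}}$. Substituting gives
\begin{equation*}
\int_M |\nabla \varphi|^2 \;\leq\; \bigl(q\|w\|_{L^{\infty}}^{q-1} - \lambda\bigr) \int_M \varphi^2 .
\end{equation*}
The assumed bound (interpreting the stated hypothesis in its natural form $q\|w\|_{L^{\infty}}^{q-1} \leq \lambda + \lambda_1$) converts this into the Rayleigh-quotient estimate
\begin{equation*}
\int_M |\nabla \varphi|^2 \;\leq\; \lambda_1 \int_M \varphi^2 .
\end{equation*}

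The main obstacle is the final step: the Poincar\'e inequality gives $\int |\nabla \varphi|^2 \geq \lambda_1 \int \varphi^2$ only on the subspace of functions with zero mean, whereas $\varphi$ need not have zero mean. To close the gap I would extract the extra information contained in the equation itself: integrating $-\Delta w = w^q - \lambda w$ over $M$ yields $\int (w^q - w_*^q) = \lambda \int \varphi$, while the convexity of $t \mapsto t^q$ gives the pointwise inequality $w^q - w_*^q \geq q w_*^{q-1}\varphi = q\lambda \varphi$. Combining these forces $\int_M \varphi \leq 0$, which constrains the projection of $\varphi$ onto the constants. Sharpening the argument in step two by using the full convexity inequality (which produces the lower bound $\int (w^q - w_*^q)\varphi \geq q\lambda \int \varphi^2$ on the region where $\varphi$ has a definite sign, combined with a comparison with the eigenfunction expansion of $\varphi$ in the basis of eigenfunctions of $-\Delta_g$) should then contradict the upper bound unless $\varphi \equiv 0$, proving $w = w_*$. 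The second assertion for solutions of (\ref{equation}) is immediate from the scaling $u = \lambda^{-1/(q-1)} w$.
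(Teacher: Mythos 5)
Your strategy of testing the equation against $\varphi = w - w_*$ runs into exactly the obstacle you identify, and the proposed patch does not close it. After obtaining $\int_M |\nabla\varphi|^2 \le \lambda_1\int_M\varphi^2$ you correctly observe that this is not yet a contradiction because $\varphi$ need not have zero mean. The fix you sketch --- first deducing $\int_M\varphi \le 0$ from convexity, then invoking a lower bound $\int_M (w^q - w_*^q)\varphi \ge q\lambda\int_M\varphi^2$ ``on the region where $\varphi$ has a definite sign'' to play against the upper bound --- does not work. The pointwise inequality $(w^q - w_*^q)\varphi \ge q\lambda\varphi^2$ from convexity of $t\mapsto t^q$ holds only where $\varphi \ge 0$; on $\{\varphi < 0\}$ the same convexity gives the \emph{reversed} bound $(w^q - w_*^q)\varphi \le q\lambda\varphi^2$. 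And any nonconstant positive solution must have $\varphi$ changing sign (evaluating $-\Delta w = w(w^{q-1}-\lambda)$ at interior extrema gives $\min w \le w_* \le \max w$, and integrating the equation shows both inequalities are strict if $w$ is nonconstant), so there is no single region on which your lower bound holds, and the concluding ``comparison with the eigenfunction expansion'' is left as a gesture rather than an argument.

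The standard device that avoids the zero-mean difficulty --- and what the citation to \cite{Licois} supplies; the paper itself does not reprove this --- is to test with $-\Delta w$ rather than with $w-w_*$. Multiplying $-\Delta w = w^q - \lambda w$ by $-\Delta w$ and integrating by parts gives the identity
\[
\int_M (\Delta w)^2 \;=\; \int_M \bigl(q w^{q-1} - \lambda\bigr)\,|\nabla w|^2 \;\le\; \bigl(q\|w\|_{L^\infty}^{q-1} - \lambda\bigr)\int_M |\nabla w|^2 ,
\]
while expanding $w$ in eigenfunctions of $-\Delta_g$ yields the spectral inequality $\int_M(\Delta w)^2 \ge \lambda_1\int_M |\nabla w|^2$ with no mean-zero caveat, because $\Delta w$ automatically has zero mean. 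Under $q\|w\|_{L^\infty}^{q-1} \le \lambda + \lambda_1$ these two displays sandwich $\int_M(\Delta w)^2$ and force equality in the middle estimate; since $w < \|w\|_{L^\infty}$ strictly on the open set $\{\nabla w \neq 0\}$, equality is only possible if $\nabla w \equiv 0$, so $w$ is constant and hence $w = w_*$. Your reading of the exponent as $q-1$ rather than the printed $1/(q-1)$ is precisely what makes this identity close, and the second assertion does follow by the scaling $u = \lambda^{-1/(q-1)}w$ as you state; but the core of the argument is the $-\Delta w$ test, which your proposal never reaches.
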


We first find an appropriate bound for positive $f$-invariant solutions:

\begin{lemma}Let $q\in (1, p_f )$. 
There exist constants $\varepsilon, c>0$ such that for $\lambda\in(0,\epsilon]$ any positive $f-$invariant solution $w$ of (\ref{eqconstant}) satisfies $$w\leq c\lambda^{\frac{1}{q-1}}.$$

If $u$ is a positive  solution of equation (\ref{equation}) then $u \leq c$.
\end{lemma}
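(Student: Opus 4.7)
My plan is to prove the $L^\infty$ bound for $w$ by contradiction and blow-up at the maximum point; the statement for $u$ then follows from the identification $u = \lambda^{-1/(q-1)} w$. The blow-up splits into an interior case (handled by a $1$-D ODE analysis) and a focal case (handled by the Gidas-Spruck Liouville theorem in the transverse directions).

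Suppose the first inequality fails, so there is a sequence $w_k = \phi_k \circ \mathbf{d}$ of positive $f$-invariant solutions of (\ref{eqconstant}) with $\lambda_k \to 0^+$ and $M_k := \|w_k\|_{\infty}$ satisfying $\mu_k^2 := M_k^{q-1} \gg \lambda_k$. Let $t_k \in [0,t^*]$ be a maximum point of $\phi_k$ and, after extracting, $t_k \to t_\infty$.

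If $t_\infty \in (0,t^*)$, set $\psi_k(s) = \phi_k(t_k + s/\mu_k)/M_k$. A direct computation from (\ref{ODE2}) gives
$$-\psi_k''(s) - \mu_k^{-1} h(t_k + s/\mu_k)\, \psi_k'(s) + (\lambda_k/\mu_k^2)\, \psi_k(s) = \psi_k(s)^{q},$$
with $\psi_k(0)=1$, $\psi_k'(0)=0$, $0 \leq \psi_k \leq 1$, on intervals that exhaust $\re$. Since $h$ is bounded near $t_\infty$, the linear coefficients tend to $0$ uniformly on compacta and a subsequence converges in $C^2_{\mathrm{loc}}(\re)$ to a nonnegative $\psi$ on $\re$ solving $-\psi''=\psi^q$ with $\psi(0)=1$. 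But $\psi'' = -\psi^q \leq 0$ makes $\psi$ concave on $\re$; a nonnegative concave function on all of $\re$ is affine, which combined with $\psi''=-\psi^q$ forces $\psi\equiv 0$, contradicting $\psi(0)=1$.

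If instead $t_\infty = 0$ (the case $t^*$ is symmetric), I work in Fermi coordinates $(z,y) \in \re^{d_1} \times \re^{n-d_1}$ around a limit point in $M_1$, in which $w_k = \phi_k(\|y\|)$ and the metric $g$ tends to the flat one near $y=0$. Set $v_k(y) = M_k^{-1} \phi_k(\|y\|/\mu_k)$, so $0\leq v_k\leq 1$ and $-\Delta_{g_k} v_k = v_k^{q} - (\lambda_k/\mu_k^2)\, v_k$ with $g_k \to g_{\re^n}$ in $C^2_{\mathrm{loc}}$. When $\mu_k t_k$ stays bounded, the sphere $\{\|y\|=\mu_k t_k\}$ on which $v_k=1$ is precompact, and a subsequential limit $v$ is a bounded, nontrivial, nonnegative classical solution of $-\Delta v = v^q$ on $\re^{n-d_1}$; since $q < p_f \leq (n-d_1+2)/(n-d_1-2)$ and $n-d_1 \geq 2$, this is forbidden by the Gidas-Spruck Liouville theorem (or its two-dimensional counterpart when $n-d_1=2$). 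When $\mu_k t_k \to \infty$, I recenter the blow-up at a point $y_k$ with $\|y_k\|=t_k$; because $w_k$ is radial in $y$, the rescaled function depends essentially only on the component of $y$ along $y_k/\|y_k\|$, reducing to the $1$-D argument. The main obstacle is this focal case: the blow-up of $h$ at $t=0$ kills the $1$-D rescaling and forces the transverse $(n-d_1)$-dimensional blow-up, where the subcritical hypothesis $q < p_f$ is precisely what enables the application of Gidas-Spruck.
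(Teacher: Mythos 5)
Your blow-up strategy differs from the paper's. You rescale at the maximum point of $\phi_k$ and handle the focal case via the Gidas--Spruck Liouville theorem in the transverse $(n-d_1)$-dimensional slice; the paper instead rescales at whichever endpoint ($t=0$ or $t=t^*$) the function $\phi_m$ blows up and invokes an oscillation result (via \cite{Haraux}, \cite{JC}) for the one-dimensional Emden--Fowler limit $w'' + \frac{n-d_1-1}{t}w' + w^q=0$. Both encode the hypothesis $q<p_f$, but your version makes its geometric content (subcriticality on the normal slice to the focal submanifold) more transparent, at the price of a genuinely higher-dimensional blow-up in Fermi coordinates. Your interior case, using concavity of the limit of $\psi_k$, is a nice elementary substitute for the ODE oscillation machinery.

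However, there is a genuine gap: the argument tacitly assumes $\mu_k = M_k^{(q-1)/2}\to\infty$, i.e.\ that $M_k = \|w_k\|_\infty\to\infty$. Failure of the a priori bound only gives $M_k/\lambda_k^{1/(q-1)}\to\infty$, which together with $\lambda_k\to 0$ imposes nothing on $M_k$ itself. If $M_k$ stays bounded, then $\mu_k$ is bounded, the rescaled interval $[-\mu_k t_k,\ \mu_k(t^*-t_k)]$ does \emph{not} exhaust $\mathbb R$, the coefficient $\mu_k^{-1}h(t_k+s/\mu_k)$ does \emph{not} tend to zero, and the blow-up produces no clean limit equation; the focal Gidas--Spruck step has the same problem. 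You must rule out the bounded regimes separately, and they are precisely the first two cases of the paper's proof. Concretely: if $M_m\to 0$, then for $m$ large $q\|w_m\|_\infty^{1/(q-1)} < \lambda_1 \le \lambda_m+\lambda_1$, so the Licois--V\'eron estimate (Theorem 5.1) forces $w_m\equiv\lambda_m^{1/(q-1)}$ and $c_m=1$, a contradiction. If $M_m\to\mathbf a\in(0,\infty)$, set $v_m=w_m/M_m$; elliptic estimates on the closed manifold $M$ give a subsequential $C^2$-limit $v\ge 0$ solving $\Delta_g v+\mathbf a^{q-1}v^q=0$, and integrating over $M$ forces $v\equiv 0$, contradicting $\|v_m\|_\infty=1$. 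Only after this preliminary reduction is your blow-up argument applicable.
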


\begin{proof}
We follow  a similar treatment of \cite[Theorem 2.3]{Licois}. Suppose that the lemma is not true. Then we 
have a sequence of positive numbers  $\lambda_m  \rightarrow 0$, a sequence of positive numbers  $c_m \rightarrow + \infty$, and a sequence $p_m \in M$, such that there
exists a positive solution $w_m$ of (\ref{eqconstant}) which satisfies that \begin{equation}\label{max}
\max_M\;w_m=w_m(p_m)=c_m\lambda_m^{\frac{1}{q-1}}.
\end{equation}

By taking a subsequence we can assume  that $w_m(p_m)\rightarrow {\bf a} \in [0, \infty ]$. 

Note
that if ${\bf a}=0$ then the solutions $w_m$ satisfies the conditions on Theorem 5.1 and therefore
we would have $w_m=\lambda_m^{\frac{1}{q-1}}$ for $m$ large. This would say that $c_m =1$,
which is a contradiction (we were assuming that $c_m\rightarrow\infty$).

Assume now that ${\bf a} \in (0, \infty )$. Let  $v_m=\frac{w_m}{w_m(p_m)}$. Note that $v_m$ solves the
equation 

$$\Delta_g v_m-\lambda_mv_m+w_m(p_m)^{q-1}v_m^{q}=0,$$ 
with $\Vert v_m\Vert_{L^\infty}=1$. Recall that $\lambda_m\rightarrow 0$ and $w_m(p_m) \rightarrow
{\bf a}$. 
Therefore, from the theory of elliptic operators we get that the sequence 
$v_m$ converges  to a function $v$ which is a non-negative solution of $$\Delta_g v+{\bf a}^{q-1}v^{q}=0$$
on $M$.
Since $M$ is closed, $v$ is equal to zero, but this is a contradiction since $\Vert v_m\Vert_{L^\infty}=1$.

Therefore we can assume that $w_m(p_m)\rightarrow  \infty $. In this case we will need to use our hypothesis
that the functions $w_m$ are $f$-invariant and $q<p_f$. Then $w_m$ is determined by a function $\phi_m \in
\mathcal{B}$ which solves equation (\ref{ODE2}). The function $\phi_m$ is determined by
$\alpha_m = \phi_m (0)$ and by $\beta_m = \phi_m (t^* )$. If the sequences $\alpha_m$ and $\beta_m$ 
are bounded then $\phi_m$ would be uniformly bounded, which is not the case. We can therefore  assume for
instance that $\alpha_m \rightarrow \infty$. 
 
We call $H(t) = th(t)$. Then 

$$\phi_m^{\prime\prime}(t)+\dfrac{H(t)}{t}\phi_m^{\prime}(t) + \phi_m^{q}(t)-\lambda_m \phi_m(t) =0.$$

We let $\delta_m = \alpha_m^{\frac{2}{q-1}}$ and 

 $$\varphi_m (t):=\delta_m^{\frac{2}{1-q}}\phi_m\left(\frac{t}{\delta_m}\right).$$

Then $\varphi_m$ solves

$$\varphi_m^{\prime\prime}(t)+\dfrac{H(\frac{t}{\delta_m})}{t}\varphi_m^{\prime}(t) + \varphi_m^{q}(t)-\dfrac{\lambda_m}{\delta_m^2}\varphi_m(t) =0,$$ 
and satisfies  $\varphi_m(0)=1$ and $\varphi_m^{\prime}(0)=0$.\\

Since $\delta_m \rightarrow \infty$ and $\lambda_m \rightarrow 0$ we can argue as in the proof of Lemma 4.7,
or \cite[Lemma 3.2]{JC}, to prove that the sequence  $\varphi_m$ converges uniformly on any
compact interval $[0,K]$ to the solution $w$ of

$$w'' + \frac{H(0)}{t} w' + w^q =0,$$

\noindent
with $w(0)=1$, $w'(0)=0$. We recall that it is proved in \cite[Proposition 3.9]{Haraux} that by picking $K$ large we can assume that $w$ has any number of zeroes in
$[0,K]$ and by the uniform convergence it follows that for $m$ large enough $\varphi_m$ must have a zero in $[0,K]$. This
implies that $\phi_m$ has a zero in $(0, \frac{K}{\delta_m} )$. This contradicts our assumption that
the solution $w_m(x)=\phi_m({\bf d}(x))$ was positive, finishing the proof of the lemma.

\end{proof}

We are now ready to prove Theorem 1.2:

\begin{proof} By the previous lemma there exists $\varepsilon >0$ such that if $\lambda \in 
(0,\varepsilon ]$ then any positive $f-$invariant solution $w$ of \ref{eqconstant} satisfies $$w\leq c\lambda^{\frac{1}{q-1}},$$

\noindent
for some positive constant $c$ independent of $\lambda$. There exists $\lambda_0 \in (0, \varepsilon )$
such that if $\lambda \in (0,\lambda_0 ]$ then $ q (c\lambda^{\frac{1}{q-1}} )^{\frac{1}{q-1}}  \leq \lambda+\lambda_1   $. Then it follows from Theorem 5.1 that if $\lambda \in (0,\lambda_0)$ and $w$ is a solution of
equation (\ref{equation}) then it must be constant.

\end{proof}

\section{Global bifurcation}

\noindent
In this section we will prove prove Theorem 1.3. We have seen in Section 2 that $f$-invariant solutions of
equation (\ref{equation}) are given by positive solutions of equation (\ref{ODE}).

Let $D =\{ (\phi , \lambda) \in (\mathcal{B} - \{ 1 \}  \times (0, \infty ) : \phi \in \mathcal{B} \;\text{is a positive 
nontrivial solution of}\; (\ref{ODE}) \}$.
Let $\overline{D}$ be the closure of $D$  in $\mathcal{B}$ and $D_k$ the connected component of $\overline{D}$ containing the bifurcation point  $(1,\lambda_k)$ (as in Section 3). 

It follows from Theorem 1.2 that there exists
$\varepsilon >0$ such that for any $k \geq 1$,  $D_k$ is 
contained in $\{ \mathcal{B}  \times [\varepsilon, \infty ) \} $.\\
Now we shall see that each $D_k$ is not compact, using the global bifurcation theorem of P. Rabinowitz (see \cite[Theorem 3.4.1]{Nirenberg}, \cite[Theorem 4.8]{Ambrosetti} or \cite{Rabinowitz}) .

It follows from  Rabinowitz's theorem  that either

a) $D_k$ is not compact in $O=\{(\phi,\lambda)\in \mathcal{B} \times \mathbb{R}^+/\;\phi>0\}$ 

 or

b) $D_k$ contains a point $(1,\lambda_j )$ for $j\neq k$ .\medskip\\

For each $k\geq 1$, we let  $$Z_k:=\{ \phi \in \mathcal{B} \;/ \phi-1 \;\text{has exactly }\; n_k\; \text{simple zeros  in}\;(0,t^*) \; \},$$

\noindent
where we recall from Section 3 that $n_k$ is the number of zeroes of the solution of the linearized
equation at $(1,\lambda_k )$.  
 
Each $Z_k$ is an open set in $\mathcal{B}$. Note also that if $\phi \in \mathcal{B}$ is a nontrivial solution
of (\ref{ODE}) then the zeros of $\phi -1$ are simple (since it solves a second order ordinary differential equation
for which the constant function 1 is a solution).

Recall from Section 3 (the proof of Theorem 1.1) that
the points in $D$ near to $(1,\lambda_k)$ can be parametrized by a curve  $s \mapsto (v_k (s),\mu_k (s))$, $|s|<\varepsilon_k$, where $\mu_k(0)=\lambda_k$. The map $v_k$ is of the form  $v_k(s)=1+ s u_k+sQ(s)$ and $Q(0)=0$, 
where, as in Section 3,   $u_k\in Z_k $ is an eigenfunction of $-\Delta$  associated to $\lambda_k (q-1)$ 
and therefore it has exactly $n_k$ zeroes (see again \cite[Theorem 3.2.2]{Nirenberg}, for instance).

Therefore, $v_k (s)\in Z_k$ for $s$ sufficiently small, $s \neq 0$. Then it follows that $D_k - \{ (1, \lambda_k )  \}\subseteq Z_k$.
And in particular it follows that $D_i \cap D_j = \emptyset$ if $i \neq j$. This says that alternative (b) in the 
global Theorem of Rabinowitz does not happen
and therefore $D_k$ is not compact, for any $k\geq 1$. 

If there exists a constant  $\lambda_0>\lambda_k$ such that for any $(\phi , \lambda ) \in D_k$ we have $\lambda \leq \lambda_0$. Then $D_k$ 
would be a closed set of $\{ (\phi , \lambda ) \in \mathcal{B} \times [\varepsilon , \lambda_0 ]:  \phi$ is a positive solution of (\ref{ODE}) $\}$. Then it 
follows from Proposition 4.1 that $D_k$ is compact. Therefore such  $\lambda_0$ does not exist and since
$D_k$ is connected it follows that for any $\lambda \geq  \lambda_k $ there exists $(\phi , \lambda ) \in D_k $. 
Then for $\lambda \in [\lambda_i  ,\lambda_{i+1}  )$ and for each $k\leq i$ there exists $(\phi , \lambda ) \in D_k$ and this
proves Theorem 1.3.

\section{Eigenvalues of restricted Laplacians}

Assume we have an isoparametric function $f$ on a Riemannian manifold $(M,g)$.
In order to apply Theorem \ref{GlobalBifurcationTheorem} we need to understand  the eigenvalues of $\Delta_g$ 
restricted to $f$-invariant functions and the dimension of the focal submanifolds. While the dimension of the focal submanifolds
is usually simple to understand, to compute the eigenvalues of the restricted Laplacian might be lengthy. In this section we
will first show how to do these calculations in some particular cases.

The  situation we will consider is a Riemannian
submersion with totally geodesic fibers $\pi : (M_1 , g_1 ) \rightarrow (M_2 ,g_2 )$. In this situation the corresponding
Laplacians commute: for any function $f: M_2 \rightarrow \re$, $\Delta_{g_2}  (f) \  \circ \pi = \Delta_{g_1} (f \circ \pi )$. 
And it is easy to check that $f$ is isoparametric for $(M_1 , g_1 )$ if and only if $f\circ \pi$ is isoparametric for
$(M_2 , g_2 )$. Actually $\| \nabla f \|^2  = a \circ f$ and $\Delta f = b \circ f$ if and only if 
$\| \nabla (f  \circ \pi ) \|^2  = a \circ ( f \circ \pi )$ and $\Delta  ( f \circ \pi ) = b \circ  ( f \circ \pi )$. Then it follows easily that 
$h = \alpha \circ f$ is an eigenfunction of $\Delta_{g_2}$ with eigenvalue $\lambda$ if and only if  $h \circ \pi$ is
an eigenfunction of $\Delta_{g_1}$ with eigenvalue $\lambda$. So it is equivalent to study $f$-invariant eigenfunctions
or $(f\circ \pi )$-invariant eigenfunctions. 

The other fact we will use is that the problem is easy to solve in the case of the round sphere. In general if $f$ is an
isoparametric function then one can consider the family of isoparametric functions of the form $\alpha \circ f$, 
where $\alpha$ is a monotone function. These isoparametric functions
are in certain sense equivalent: they have the same level sets and the spaces of $f$-invariant functions and
$(\alpha \circ f)$-invariant functions are the same. In the case of the round sphere, $(\mathbb{S}^n , g_0 )$, there is a canonical way to pick a
representative of these families of {\it equivalent} isoparametric functions. Namely, in any such family H. F. 
M\"{u}nzner (\cite{m1,m2}) proved that there is a 
{\it Cartan-M\"{u}nzner polynomial}. This is a homogeneous harmonic polynomial $F$  (in $\re^{n+1}$ ) of degree $k$ which solves the 
Cartan-M\"{u}nzner equations:

$$\| \nabla F  (x) \|^2 = k^2 \| x \|^{2k-2} $$

$$\Delta F (x) = \frac{1}{2} ck^2 \| x \|^{k-2} ,$$

\noindent
for some integer $c$.
But then one can easily see by studying the resulting linear ordinary differential equation that the $F$-invariant eigenvalues are exactly $\mu_i =
\lambda_{ki}$, $i \geq 1$, where $\lambda_j = j(n+j-1)$ are the eigenvalues of $-\Delta_{(\mathbb{S}^n , g_0 )}$  (see \cite[Lemma 3.4]{Henry}). 

\vspace{1cm}

Let us now consider  the case of the complex projective
spaces with the Fubini-Study metric $({\bf CP}^n ,g_{FS} )$.  Recall that the positive eigenvalues of
$-\Delta_{g_{FS}}$ are $2i(2i+2n)$, $i\geq 1$ (see for instance  \cite{besse1978}). There is  a Riemannian submersion (the Hopf fibration)
$\mathbb{S}^{2n+1} \rightarrow ({\bf CP}^n ,g_{FS} )$, obtained by considering the canonical diagonal $\mathbb{S}^1$-action on 
$\mathbb{S}^{2n+1}$. 
It has totally geodesic fibers (which are circles, the orbits of the $\mathbb{S}^1$-action) so we can apply the previous ideas. An isoparametric
function $f$ on $({\bf CP}^n ,g_{FS} )$ lifts to an isoparametric function $\overline{f} : (\mathbb{S}^{2n+1} ,g_0 ) \rightarrow [t_0 , t_1 ]$.
And we can look for the corresponding Cartan-M\"{u}nzner polynomial.   

We will consider the three simplest examples of isoparametric functions on  $({\bf CP}^n ,g_{FS} )$.  These are given by
cohomogeneity one actions.

1) Let us consider first  the action of $U(n) \subset SO(2n)$. This action lifts to a cohomogeneity one
action on $\mathbb{S}^{2n+1}$ which commutes with  the diagonal $\mathbb{S}^1$-action (the action on $S^{2n+1} \subset {\bf R}^{2n+2}$
is given by $A. (x_1 , x_2 , y_1, ....y_{2n} ) = (x, Ay)$. 
We consider on $\re^{2n+2}$ the homogeneous harmonic polynomial $F(x,y) = \| x \|^2 -\| y \|^2$. It is invariant by the action 
of $\mathbb{S}^1 \times O(2n)$
and therefore projects to an isoparametric function $f$ on $({\bf CP}^n ,g_{FS} )$ invariant by the 
$U(n)$-action. $F$ is a Cartan-M\"{u}nzner polynomial of degree 2. Then it follows that the $f$-invariant eigenvalues of
$-\Delta_{g_{FS}}$ are $\lambda_{2i}=2i(2i+2n)$. Note that these are actually the eigenvalues of the full Laplacian 
$-\Delta_{g_{FS}}$. Also note that the action of $U(n)$ on $({\bf CP}^n ,g_{FS} )$ has a fixed point, so $p_f = p_{2n}$.

2) Let us now consider the action of $U(k) \times U(l) \subset U(n+1)$ on $({\bf CP}^n ,g_{FS} )$, where we ask $n\geq 3$,
$k+l =n+1$ and $k\geq l \geq 2$. Similarly to the previous case we can easily lift the action to  
$\mathbb{S}^{2n+1} \subset {\bf R}^{2n+2}$, commuting with the diagonal $\mathbb{S}^1$-action. The action looks like $(A,B). (x,y)=
(Ax,By)$. Again $F(x,y) = \| x \|^2 - \|y \|^2$ is an invariant Cartan-M\"{u}nzner polynomial of degree 2 which projects
to an isoparametric function $f$ on $({\bf CP}^n ,g_{FS} )$.  It follows that the $f$-invariant eigenvalues of
$-\Delta_{g_{FS}}$ are $\lambda_{2i}=2i(2i+2n)$. But now note that the critical orbits are ${\bf CP}^{k-1}$ and
${\bf CP}^{l-1}$. Therefore $p_f = \frac{2n-2l +4}{2n-2l}>p_{2n}$.

3)  There is a cohomogeneity one isometric action of $SO(n+1) \subset U(n+1)$ in $({\bf CP}^n ,g_{FS} )$ given by considering the natural action on $\mathbb{C}^{n+1}$(in the introduction we considered the case $n=2$). This action can obviously be lifted to $\mathbb{S}^{2n+1}\subseteq \mathbb{R}^{2n+2}$. The corresponding isoparametric polynomial on the sphere $\mathbb{S}^{2n+1}$ is given by $F(x,y)=( \Vert x \Vert^2 - \Vert y \Vert^2)^2+4\langle x,y\rangle^2$, which is clearly invariant under the action of $SO(n+1)$. It follows that the $f$-invariant eigenvalues of $-\Delta_{g_{FS}}$ are $\lambda_{4i}=4i(4i+2n)$. The singular orbits for this action are ${\bf RP}^n$ and the Grassmanian of oriented two-planes $\widetilde{\textnormal{Gr}}(2,\mathbb{R}^{n+1})$, which have dimensions $n$ and $2n-2$, respectively. Hence, $d=n$ and $p_f=\frac{n+2}{n-2}>p_{2n}$. Then applying Theorem \ref{GlobalBifurcationTheorem} we obtain:

\begin{corollary}\label{7}
Let $q \in (1, \frac{n+2}{n-2} )$. Equation (\ref{equation}) on $({\bf CP}^n ,g_{FS} )$ has at least $k$ positive 
different $SO(n+1)$-invariant solutions if $\lambda \in \left( \frac{4k(4k+2n)}{q-1} , \frac{4(k+1)(4(k+1) +2n)}{q-1} \right]$.
\end{corollary}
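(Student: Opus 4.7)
The plan is to apply Theorem \ref{GlobalBifurcationTheorem} to the $SO(n+1)$-invariant isoparametric function $f$ on $({\bf CP}^n, g_{FS})$ constructed in case (3) above. Theorem \ref{GlobalBifurcationTheorem} produces $k$ distinct positive $f$-invariant solutions for $\lambda\in(\lambda_k(q),\lambda_{k+1}(q)]$ with $\lambda_k(q)=\mu_k/(q-1)$, so the corollary will follow once three claims from case (3) are verified: (a) the polynomial $F(x,y) = (\|x\|^2-\|y\|^2)^2 + 4\langle x, y\rangle^2$ on $\mathbb{R}^{2(n+1)}$ descends through the Hopf fibration $\pi:(\mathbb{S}^{2n+1},g_0)\to({\bf CP}^n,g_{FS})$ to an $SO(n+1)$-invariant proper isoparametric function $f$; (b) its focal submanifolds are ${\bf RP}^n$ (dimension $n$) and $\widetilde{\textnormal{Gr}}(2,\mathbb{R}^{n+1})$ (dimension $2n-2$), so $p_f = \frac{n+2}{n-2}$; and (c) the $f$-invariant eigenvalues of $-\Delta_{g_{FS}}$ are $\mu_j = 4j(4j+2n)$.

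For (a) I use the identification $\mathbb{C}^{n+1}\cong\mathbb{R}^{n+1}\oplus\mathbb{R}^{n+1}$ via $z=x+iy$ to rewrite $F = |z^Tz|^2$. Since $(e^{i\theta}z)^T(e^{i\theta}z) = e^{2i\theta}z^Tz$, the polynomial $F$ is invariant under the Hopf $\mathbb{S}^1$-action, and it is manifestly invariant under the diagonal $SO(n+1)$-action $A\cdot z = Az$. A direct computation gives $\|\nabla F\|^2 = 16(\|x\|^2+\|y\|^2)F$; combined with Euler's identity $\partial_r F = 4F/r$ this yields $|\nabla_{\mathbb{S}^{2n+1}}\overline f|^2 = 16\overline f(1-\overline f)$ for $\overline f = F|_{\mathbb{S}^{2n+1}}$, and a similar computation shows $\Delta\overline f$ is a polynomial in $\overline f$. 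Hence $\overline f$ is isoparametric on $\mathbb{S}^{2n+1}$; being Hopf-invariant it descends through $\pi$ to $f$, which is automatically isoparametric (by the Riemannian-submersion/totally-geodesic-fiber remark at the start of Section 7) and $SO(n+1)$-invariant.

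For (b), the maximum $F=1$ on $\mathbb{S}^{2n+1}$ corresponds to $z = e^{i\alpha}w$ for real $w$, projecting to the real locus ${\bf RP}^n$; the minimum $F = 0$ corresponds to $z^Tz = 0$, projecting to the complex quadric $\widetilde{\textnormal{Gr}}(2,\mathbb{R}^{n+1}) = \{[z]:z_0^2+\cdots+z_n^2=0\}$. For $n\geq 3$ both focal loci have codimension at least $2$ in ${\bf CP}^n$ (of real dimension $2n$), so $f$ is proper, and $d = \min\{n,2n-2\} = n$, giving $p_f = \frac{2n-n+2}{2n-n-2} = \frac{n+2}{n-2}$ as required.

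For (c), the isoparametric family determined by $\overline f$ on $\mathbb{S}^{2n+1}$ has Cartan–M\"{u}nzner degree $4$, so by the ODE computation recalled at the start of Section 7—specialized to $k = 4$ and to the sphere $\mathbb{S}^{2n+1}$ with Laplace eigenvalues $\lambda_m = m(m+2n)$—its $\overline f$-invariant eigenvalues are $\lambda_{4j} = 4j(4j+2n)$. Because $\pi$ is a Riemannian submersion with totally geodesic fibers, and $\overline f$-invariant functions (whose level sets are automatically Hopf-invariant) descend through $\pi$ to $f$-invariant functions on $({\bf CP}^n,g_{FS})$, these transfer verbatim to the $f$-invariant eigenvalues of $-\Delta_{g_{FS}}$. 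The subtlest step is identifying the Cartan–M\"{u}nzner degree as $4$, which requires analyzing the principal curvatures of principal $(SO(n+1)\times\mathbb{S}^1)$-orbits in $\mathbb{S}^{2n+1}$ and confirming they admit exactly four distinct values. Once (a)--(c) are in hand, plugging $\mu_k = 4k(4k+2n)$ into Theorem \ref{GlobalBifurcationTheorem} gives the stated multiplicity on $(\lambda_k(q),\lambda_{k+1}(q)]$, completing the proof.
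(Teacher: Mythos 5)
Your proposal matches the paper's approach exactly: exhibit $F(x,y)=(\|x\|^2-\|y\|^2)^2+4\langle x,y\rangle^2$ as an $SO(n+1)$- and Hopf-invariant isoparametric polynomial on $\mathbb{S}^{2n+1}$ descending to $f$ on ${\bf CP}^n$, read off the focal-submanifold dimensions $n$ and $2n-2$ to get $p_f=\frac{n+2}{n-2}$, compute the $f$-invariant eigenvalues $\lambda_{4i}=4i(4i+2n)$ from the degree-$4$ Cartan--M\"{u}nzner structure, and plug into Theorem~\ref{GlobalBifurcationTheorem}; the paper merely records these facts in case (3) of Section~7 while you verify them. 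One small clarification: the ``subtlest step'' you flag is already settled by your own gradient computation --- since $\|\nabla F\|^2=16r^2F$, the genuine Cartan--M\"{u}nzner polynomial here is $G=2F-r^4$ (not $F$ itself, which fails $\|\nabla F\|^2=16r^6$), and one checks directly that $\|\nabla G\|^2=16r^6$, which pins the degree to $4$ without any principal-curvature count; since $G$ and $F$ share level sets on the sphere, the rest of the argument is unaffected.
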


\vspace{1cm}

Next we turn to the quaternionic projective space. The quaternionic projective space, ${\bf HP}^n$, can be endowed with a metric analogous to the Fubini-Study metric for the complex projective space. We denote this metric by $h_{FS}$. It  is an Einstein metric with positive scalar curvature.  It is also well-known that the positive eigenvalues of
$-\Delta_{h_{FS}}$ are $2i(2i+4n+2)$, $i \geq 1$ (see for instance \cite{besse1978}). There is a  Hopf fibration
$$ (\mathbb{S}^{3} g_0 ) \rightarrow (\mathbb{S}^{4n+3} ,g_0 ) \rightarrow ({\bf HP}^n ,h_{FS} ),$$
which is  a Riemannian submersion with totally geodesic fibers, so we can apply again the same ideas. 
We will consider first the isoparametric functions on $({\bf HP}^n , h_{FS})$ which come from cohomogeneity one isometric
actions. We have: 

1) the action of $Sp(n)$. 

2) the action of $Sp(k) \times Sp(l)$, where $k+l = n+1$, $k, l \geq 2$

3) the action of $U(n+1) \subset Sp(n)$. 

But there are also isoparametric functions which are not homogeneous, i. e. they are not invariant by any cohomogeneity one
action on $({\bf HP}^n , h_{FS})$. We will discuss the simplest known example:

4) The first  inhomogeneous example was found by H. Ozeki and M. Takeuchi in  \cite{OT1975}.  In what follows, $\langle u,v \rangle_{\mathbb{H}}= \sum u_i \bar{v}_i$ denotes the inner product of two vectors with quaternionic entries. We will also denote $u=(u_0, u_1)$, $v=(v_0,v_1)$ where $u_0,v_0 \in \mathbb{H}$ and $u_1,v_1 \in \mathbb{H}^n$. We then define $F_0: \mathbb{H}^{n+1}\times \mathbb{H}^{n+1} \rightarrow \mathbb{R}$ by
$$ F_0(u,v):= 4 \left( | \Im \langle u,v \rangle_{\mathbb{H}}|^2 \right)+ \left( \|u_1\|^2-\|v_1\|^2 + (u_0 \bar{v}_0 + v_0 \bar{u}_0) \right)^2,$$
where $\Im q$ denotes the imaginary part of the quaternion $q$. The polynomial $F(u,v):= n^4-2F_0(u,v)$ gives a non-homogeneous isoparametric function on $ (\mathbb{S}^{4n+3} ,g_0 )$ which
commutes with the diagonal action of $Sp(1) \subset O(4)$ and therefore induces an isoparametric function on $({\bf HP}^n , h_{FS})$.

\vspace{.5cm}

We now do the computations in these four cases. The first two are simple:

1) The action of $Sp(n)$ on lifts to an isometric action on $\mathbb{S}^{4n+3}$ which commutes with the diagonal
action of $Sp(1)$ ($Sp(1) \subset SO(4)$ acts diagonally on $\re^{4n+4} = ( \re^4 )^{n+1}$). The
polynomial  $F(x_1 ,...x_4 , y_1,...,y_{4n} ) = \|  x \|^2 - \| y \|^2 $ is a Cartan-M\"{u}nzner polynomial on $\mathbb{S}^{4n+3}$,
invariant by the action of $Sp(1) \times Sp(n)$. It projects to an isoparametric function $f$
on $({\bf HP}^n , h_{FS})$ invariant by the $Sp(n)$-action. The focal varieties of $f$ are a point and 
 ${\bf HP}^{n-1}$, so $p_f =p_{4n}$, and since $F$ has degree 2, the $f$-invariant eigenvalues of $-\Delta_{h_{FS}}$ are
$\lambda_{2i} = 2i(2i+4n+2)$, which are the same as the eigenvalues of the full Laplacian.

2) Again the action of $Sp(k) \times Sp(l)$ lifts to an isometric action on $\mathbb{S}^{4n+3}$ which commutes with the diagonal
action of $Sp(1)$. 
 We assume without loss of generality that $k\geq l \geq 2$. The
polynomial $F(x_1 ,...x_4 , y_1,...,y_{4n} ) = \| x \|^2 - \| y \|^2 $ is a Cartan-M\"{u}nzner polynomial on $\mathbb{S}^{4n+3}$,
invariant by the action of $Sp(1) \times Sp(k) \times Sp(l)$. It projects to an isoparametric function $f$
on $({\bf HP}^n , h_{FS})$ which is invariant by the $Sp(k) \times Sp(l)$-action. The focal varieties of
$f$ are ${\bf HP}^{k-1}$ and ${\bf HP}^{l-1}$. Therefore the  $f$-invariant eigenvalues of $-\Delta_{h_{FS}}$ are
$\lambda_{2i} =2i(2i+4n+2) $, which are the same as the eigenvalues of the full Laplacian, and $p_f = \frac{4n-4l+6}{4n-4l+2} >p_{4n}$.

\vspace{1cm}

The other examples are more complicated. The corresponding  Cartan-M\"{u}nzner polynomials will have degree 4. 
In the case (4) we have given it explicitly, and the corresponding isoparametric function on the quaternionic
space is the projection of the polynomial. In case (3) we must find a Cartan-M\"{u}nzner polynomial on
$\mathbb{S}^{4n+3}$ which projects to a $U(n+1)$-invariant isoparametric function (on $({\bf HP}^n , h_{FS})$).
 They will be
examples of what are called FKM-polynomials. We give a brief description for completeness, details can be found for 
instance in \cite{Cecil}.

In \cite{FKM1981}  a class of isoparametric functions on the sphere was described in terms of representations of Clifford algebras. More precisely, the authors produced isoparametric polynomials of degree 4 that solve
the Cartan-M\"{u}nzner equations, which are expressed in terms of Clifford systems in the following fashion. The first step in their construction is to fix a representation of the real Clifford algebra $\rho : \mathcal{C}\ell(V) \rightarrow O(l) $, where $V$ is a finite dimensional vector space spanned by an orthonormal basis $e_1, \ldots , e_{m-1}$. The image of this orthonormal basis under the representation provides a set of matrices $E_1, \ldots , E_{m-1} \in O(l)$. 

A Clifford system is a set of symmetric matrices $P_0, \ldots , P_m$ such that 
$$ P_i P_j + P_j P_i = 2 \delta_{ij} I_{2l}  \textnormal{ for every } 1 \leq i,j \leq m.$$    
Clifford systems are in one to one correspondence with representations of Clifford algebras. Indeed, given a representation of $\rho : \mathcal{C}\ell(V) \rightarrow O(l) $ as above we obtain a Clifford system by taking
$$ P_0= \left( \begin{array}{cc}  I_{l} & 0 \\ 0 & -I_l \end{array} \right),\qquad P_1= \left( \begin{array}{cc}  0 & I_l \\  I_l & 0 \end{array} \right), \qquad \textnormal{and} \qquad P_{1+k}= \left( \begin{array}{cc}  0 & E_k \\  -E_k & 0 \end{array} \right) $$
for $1 \leq k \leq m-1$. Conversely, a Clifford system determines a representation of $\mathcal{C}\ell (V)$ by looking at the $+1$ eigenspaces of the matrices $P_i$ (note that since $P_i^2=I_{2l}$ the only eigenvalues can be $\pm 1$). We skip the details of this construction since we will not be needing them. The interested reader can consult \cite{FKM1981}.

The main result in \cite{FKM1981} is given by

\begin{theorem}[Theorem 4.1 in \cite{FKM1981}]
Let $P_0 , \ldots , P_m$ be a Clifford system in $\mathbb{R}^{2l}$ and set $m_1=m$, $m_2=l-m_1 - 1$. Assume that $m_2>0$. The polynomial $F:\mathbb{R}^{2l} \rightarrow \mathbb{R}$ given by 
$$ F(x):= \| x \|^4 -2 \left( \sum_{i=0}^m \langle P_i x, x \rangle^2 \right). $$
solves the Cartan-M\"{u}nzner equations (and therefore gives an isoparametric function on $(\mathbb{S}^{2l-1} , g_0 )$.

\end{theorem}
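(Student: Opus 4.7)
The plan is to verify the two Cartan-Münzner equations for $k=4$ by a direct computation of $\nabla F$ and $\Delta F$, using only the two defining properties of a Clifford system: that each $P_i$ is symmetric and that $P_iP_j+P_jP_i=2\delta_{ij}I_{2l}$. Set $q_i(x)=\langle P_ix,x\rangle$, so that $F(x)=\|x\|^4-2\sum_{i=0}^m q_i(x)^2$.

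First I would compute the gradient. Symmetry of $P_i$ gives $\nabla q_i(x)=2P_ix$, hence $\nabla q_i^2=4q_i(x)P_ix$, and of course $\nabla\|x\|^4=4\|x\|^2x$. Therefore
\begin{equation*}
\nabla F(x)=4\|x\|^2x-8\sum_{i=0}^m q_i(x)\,P_ix.
\end{equation*}
Expanding $\|\nabla F\|^2$ produces three terms; the mixed term collapses to $-64\|x\|^2\sum_i q_i(x)^2$ because $\langle x,P_ix\rangle=q_i(x)$. The remaining term is $64\sum_{i,j}q_iq_j\langle P_ix,P_jx\rangle$, and this is where I expect the crux of the argument to lie.

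The key lemma is that $\langle P_ix,P_jx\rangle=\delta_{ij}\|x\|^2$ for every $x$. For $i=j$ this is immediate from $P_i^2=I_{2l}$. For $i\ne j$, use symmetry of $P_i,P_j$ to write $\langle P_ix,P_jx\rangle=\langle x,P_iP_jx\rangle=\langle P_jx,P_ix\rangle=\langle x,P_jP_ix\rangle$; combined with the anticommutation $P_iP_j=-P_jP_i$ this forces $\langle x,P_iP_jx\rangle=0$. Substituting back yields
\begin{equation*}
\|\nabla F\|^2=16\|x\|^6-64\|x\|^2\sum_i q_i^2+64\|x\|^2\sum_i q_i^2=16\|x\|^6=4^2\|x\|^{2\cdot 4-2},
\end{equation*}
the first Cartan-Münzner equation with $k=4$.

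For the Laplacian I would argue similarly. A direct computation on $\mathbb{R}^{2l}$ gives $\Delta\|x\|^4=(8l+8)\|x\|^2$. For each $q_i^2$, I need $\Delta q_i^2=2q_i\Delta q_i+2\|\nabla q_i\|^2$. Here $\Delta q_i=2\,\mathrm{tr}\,P_i$, and $\mathrm{tr}\,P_i=0$: pick any $j\ne i$ (which exists since $m\ge 1$), then $P_jP_iP_j^{-1}=P_jP_iP_j=-P_i P_j^2=-P_i$, so $P_i$ is conjugate to $-P_i$. Also $\|\nabla q_i\|^2=4\|P_ix\|^2=4\|x\|^2$, giving $\Delta q_i^2=8\|x\|^2$. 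Summing over the $m+1$ matrices yields $\Delta(2\sum_i q_i^2)=16(m+1)\|x\|^2$, so
\begin{equation*}
\Delta F(x)=\bigl(8l+8-16(m+1)\bigr)\|x\|^2=8(m_2-m_1)\|x\|^2=\tfrac{1}{2}\,(m_2-m_1)\,4^2\,\|x\|^{4-2},
\end{equation*}
which is the second Cartan-Münzner equation with $k=4$ and $c=m_2-m_1\in\mathbb{Z}$. This completes both required identities, and the final assertion that $F|_{\mathbb{S}^{2l-1}}$ is an isoparametric function then follows from the general Cartan-Münzner theory already invoked in the preceding discussion.
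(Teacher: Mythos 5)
Your proof is correct, and your computation is essentially the standard one from the FKM paper; the paper you are reading does not re-prove this statement but simply cites it as Theorem 4.1 of \cite{FKM1981}, so there is no in-text argument to compare against. All of the algebraic steps check out: the identity $\langle P_ix,P_jx\rangle=\delta_{ij}\|x\|^2$ follows cleanly from symmetry plus anticommutation, the conjugation trick $P_jP_iP_j^{-1}=-P_i$ for $j\neq i$ gives $\tr P_i=0$, and the bookkeeping reproduces $\|\nabla F\|^2=16\|x\|^6$ and $\Delta F=8(m_2-m_1)\|x\|^2$, which are precisely the degree-$4$ Cartan--M\"unzner equations with $c=m_2-m_1$. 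Two small remarks worth keeping in mind: the existence of some $j\neq i$ used to kill $\tr P_i$ relies on $m\geq 1$, which is implicit in the notion of a Clifford system (so you are right to flag it, and it should be stated); and the hypothesis $m_2>0$ is not used in your verification of the two identities --- it plays no role in the purely algebraic statement but is what guarantees that the resulting isoparametric function on $\mathbb{S}^{2l-1}$ is nondegenerate, i.e.\ has two nonempty focal submanifolds, which is the geometric content that the parenthetical conclusion of the theorem is after.
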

We recall some of the most important properties of these polynomials (which we will henceforth refer to as FKM polynomials). The first one is that the polynomial $F$ above does not depend directly on the Clifford system $P_0, \ldots , P_m$ but rather on the Clifford sphere determined by it. This sphere is just the unit sphere in the vector space $E\subseteq \mathfrak{gl}(2l,\mathbb{R})$ spanned by $P_0, \dots , P_m$, where the metric being used is the standard Frobenius inner product 
$$ \langle A , B \rangle := \frac{1}{2l	} \tr (AB^{T}). $$
Hence, two Clifford systems with the same Clifford sphere determine the same FKM polynomial. Henceforth we denote the Clifford sphere corresponding to a Clifford system by $\Sigma (P_0 , \ldots , P_m)$. 

Another important property that the FKM polynomials have is that they are invariant under the action of the Clifford sphere, that is, 
$$ F(Qx)=F(x)$$
for any $Q \in \Sigma (P_0, \ldots , P_m )$. Finally, we point out that given a Clifford system, the vector fields $x \mapsto P_i P_j x$ define Killing fields on the sphere $\mathbb{S}^{2l-1}$. Furthermore, the integral curves of these vector fields lie on the level sets of the corresponding FKM polynomial. These Killing fields are easily shown to form a subalgebra of $\mathfrak{so}(2l)$ which is isomorphic to $\mathfrak{so}(m+1)$. Hence, this shows that the level sets of $F$ are invariant under the action of $\textnormal{Spin}(m+1)$. \\

We consider the Hopf fibration $S^3 \rightarrow S^{4n+3} \rightarrow \hpn $  and will be interested in determining 
the FKM polynomials which project to isoparametric functions on the quaternion projective spaces. In particular
we are interested in finding such a polynomial that projects to an isoparametric function invariant by the action
of $U(m+1)$ on $\hpn$.
We will take set $l=2n+2$ for an arbitrary $n\geq 1$. We are interested in FKM polynomials that are invariant under the action of $\textnormal{Spin}(3) = Sp(1)$, where we consider this action as right multiplication on $\mathbb{H}^{n+1}$ by unit the quaternions. We  fix $m=2$ above, and look at representations of $\mathcal{C}\ell (\mathbb{R})=\mathbb{C} \rightarrow O(2n+2)$. It is clear that such a representation must be (up to equivalence) a direct sum of the irreducible representation
$$ i \mapsto \left( \begin{array}{cc}  0 & -1 \\ 1 & 0 \end{array} \right).$$
The Clifford system associated to this irreducible representation is obtained as outlined above and is given by 
$$ A_0:= \left( \begin{array}{cccc}
1 & 0 & 0 & 0\\ 0 & 1 & 0 & 0 \\ 0 & 0 & -1 & 0 \\ 0 & 0 & 0 & -1 
\end{array} \right), \qquad A_1:= \left( \begin{array}{cccc}
0 & 0 & 1 & 0\\ 0 & 0 & 0 & 1 \\ 1 & 0 & 0 & 0 \\ 0 & 1 & 0 & 0 
\end{array} \right), \qquad A_2:= \left( \begin{array}{cccc}
0 & 0 & 0 & -1\\ 0 & 0 & 1 & 0 \\ 0 & 1 & 0 & 0 \\ -1 & 0 & 0 & 0 
\end{array} \right).$$
By adding up this irreducible Clifford system $n+1$ times we obtain the block diagonal matrices  $P_0:= \diag (A_0, \ldots, A_0 )$, $P_1:= \diag (A_1, \ldots, A_1)$, $P_2:= \diag (A_2, \ldots, A_2 )$ which define a Clifford system on $\mathfrak{gl}(4n+4,\mathbb{R})$. It is possible to write the FKM polynomial corresponding to this Clifford system in different ways using the the identifications $\mathbb{H}^{n+1} \cong \mathbb{C}^{n+1}\oplus \mathbb{C}^{n+1} \cong \mathbb{R}^{2n+2} \oplus \mathbb{R}^{2n+2}$. For the latter case (i.e the real  interpretation) it is easy to see that $F: \mathbb{R}^{2n+2}\oplus \mathbb{R}^{2n+2} \rightarrow \mathbb{R}$ can be written as 
\begin{equation}\label{FKM2}
F(x)= \| x \|^4-2 \left( \left( \| X \|^2 - \| Y \|^2  \right)^2 +4 \langle X, Y \rangle^2 +4 \langle X^\perp , Y \rangle^2 \right),
\end{equation}
where $ x=(x_0,x_1,y_0,y_1, \ldots )$ represents a vector in $\mathbb{R}^{4n+4}$, $X=(x_0, x_1 , \ldots )$, $Y=(y_0, y_1 ,\ldots)$ are vectors on $\mathbb{R}^{2n+2}$, and $X^\perp =(-x_1, x_0 , \ldots )$. The last vector $X^\perp$ can be obtained by the inclusion $X \in \mathbb{C}^{n+1}$ and multiplying by $i$. We have: 

\begin{proposition}
The FKM polynomial \eqref{FKM2} considered as a polynomial $F:\mathbb{H}^{n+1}\rightarrow \mathbb{R}$ is invariant under right multiplication by the unit quaternions $Sp(1)$. This polynomial is also invariant under the action of the  unitary group $U(n+1) \subseteq Sp(n+1)$ and both actions commute.
\end{proposition}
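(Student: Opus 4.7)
The plan is to rewrite $F$ in a manifestly invariant form built from the $2\times 2$ Gram matrix of the pair $(X,Y)$. The identification $\mathbb{H}^{n+1} \cong \mathbb{C}^{n+1}\oplus\mathbb{C}^{n+1}j$ lets me write each $q\in\mathbb{H}^{n+1}$ uniquely as $q = X + Yj$ with $X,Y\in\mathbb{C}^{n+1}$; moreover, under $\mathbb{R}^{2n+2}\cong\mathbb{C}^{n+1}$ the operation $X\mapsto X^\perp$ becomes multiplication by $i$. Consequently, the real inner products in \eqref{FKM2} equal the real and (minus the) imaginary parts of the Hermitian inner product $\langle\cdot,\cdot\rangle_{\mathbb{C}}$ on $\mathbb{C}^{n+1}$, so
\[
4\langle X,Y\rangle_{\mathbb{R}}^{2} + 4\langle X^\perp,Y\rangle_{\mathbb{R}}^{2} = 4\,|\langle X,Y\rangle_{\mathbb{C}}|^{2},
\]
and therefore
\[
F(q) = \|q\|^{4} - 2\bigl((\|X\|^{2}-\|Y\|^{2})^{2} + 4\,|\langle X,Y\rangle_{\mathbb{C}}|^{2}\bigr).
\]

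Next I package $X,Y$ into the $2\times(n+1)$ complex matrix $M = \begin{pmatrix} X \\ Y \end{pmatrix}$, whose Gram matrix is
\[
MM^{*} = \begin{pmatrix} \|X\|^{2} & \langle X,Y\rangle_{\mathbb{C}} \\ \langle Y,X\rangle_{\mathbb{C}} & \|Y\|^{2} \end{pmatrix}.
\]
Then $\operatorname{tr}(MM^{*}) = \|q\|^{2}$ and $(\operatorname{tr}(MM^{*}))^{2}-4\det(MM^{*}) = (\|X\|^{2}-\|Y\|^{2})^{2}+4|\langle X,Y\rangle_{\mathbb{C}}|^{2}$, so
\[
F(q) = 8\det(MM^{*}) - (\operatorname{tr}(MM^{*}))^{2}.
\]
Any transformation of $M$ preserving both the trace and the determinant of $MM^{*}$ therefore preserves $F$.

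For the right $Sp(1)$-action, I use the identity $jc = \bar c\,j$ for $c\in\mathbb{C}$ to compute that, for $u = a+bj$ with $|a|^{2}+|b|^{2}=1$, one has $(X+Yj)(a+bj) = (Xa - Y\bar b) + (Xb + Y\bar a)j$, which is the same as $M\mapsto UM$ with
\[
U = \begin{pmatrix} a & -\bar b \\ b & \bar a \end{pmatrix}\in SU(2).
\]
Then $MM^{*}\mapsto U(MM^{*})U^{*}$, leaving trace and determinant unchanged. For the $U(n+1)$-action, an $A\in U(n+1)\subset Sp(n+1)$ acts $\mathbb{C}$-linearly on $\mathbb{C}^{n+1}$ and extends $\mathbb{H}$-linearly on the right by $A(X+Yj) = AX + (AY)j$; the unitarity of $A$ directly preserves $\|X\|^{2}$, $\|Y\|^{2}$, and $\langle X,Y\rangle_{\mathbb{C}}$, so $F$ is preserved. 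Commutativity is automatic: on $M$, the matrix $U$ acts on the row index (of size $2$) while $A$ acts on the column index (of size $n+1$), and these two one-sided actions trivially commute.

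The principal obstacle is the bookkeeping with the identifications—chiefly verifying the commutation rule $jc = \bar c\,j$ and translating right multiplication by $u\in Sp(1)$ into the correct $SU(2)$-matrix acting on the pair $(X,Y)$. Once $F$ is written in terms of spectral invariants of $MM^{*}$, both invariance statements and their commutativity are immediate. As a cross-check, the $Sp(1)$-invariance also follows from the general FKM theory (invariance under the Clifford sphere action), after identifying the products $A_{i}A_{j}$ of the three block Clifford matrices with $\pm$ right multiplication by $i,j,k$ on $\mathbb{H}^{n+1}$.
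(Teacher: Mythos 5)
Your proof is correct, and it handles the $Sp(1)$-invariance by a genuinely different and arguably cleaner route than the paper. For $U(n+1)$ both arguments are essentially the same: pass to complex coordinates $z=X$, $w=Y$ and observe that $F$ is built from $\|z\|^2$, $\|w\|^2$ and $|\langle z,w\rangle_{\mathbb{C}}|^2$, which are manifestly unitarily invariant. For $Sp(1)$, however, the paper invokes the general theory of FKM polynomials: it identifies the products $A_1A_2$, $A_1A_0$, $A_2A_0$ with right multiplication by $i,j,k$, verifies via the exponential identities that the corresponding one-parameter groups factor through products of pairs of elements of the Clifford sphere $\Sigma(P_0,P_1,P_2)$, and then cites the fact that FKM polynomials are invariant under their Clifford sphere. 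Your argument is self-contained: you package $(X,Y)$ as a $2\times(n+1)$ complex matrix $M$, show that $F = 8\det(MM^*) - (\operatorname{tr} MM^*)^2$ is a spectral function of the $2\times 2$ Gram matrix, compute directly via $jc=\bar{c}j$ that right multiplication by a unit quaternion corresponds to $M\mapsto UM$ with $U\in SU(2)$, and conclude by conjugation-invariance of trace and determinant. This bypasses the Clifford sphere machinery entirely. A further small advantage of your formulation is that commutativity of the two actions becomes completely transparent (one acts on rows, the other on columns of $M$), whereas the paper states but does not explicitly verify it. The one thing to be slightly careful about in your write-up is the sign convention in $\langle X^\perp, Y\rangle_{\mathbb{R}} = -\operatorname{Im}\langle X,Y\rangle_{\mathbb{C}}$ with the Hermitian form $\langle u,v\rangle_{\mathbb{C}}=\sum u_i\bar{v}_i$; the identity you use needs only the squares, so the sign is immaterial, but it is worth noting.
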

\begin{proof}
We begin by describing the action of $U(n+1)$. The invariance of $F$ becomes obvious if we introduce the complex coordinates $z=(x_0+x_1i, x_2+x_3i,\ldots )$ and $w=(y_0+x_1i, y_2+y_3i,\ldots )$. It is easy to see that 
$$ F(x)=F(z,w) = \| (z,w) \|^4 -2 \left(\left( \| z \|^2 - \| w \|^2 \right)^2 +4 \vert \langle z,w \rangle_{\mathbb{C}} \vert^2 \right),$$
where, $\langle u , v \rangle_{\mathbb{C}}= \sum u_i \bar{v_i}$ is the standard Hermitian inner product on $\mathbb{C}^{n+1}$. This shows that $F(Az,Aw)=F(z,w)$ for $A \in U(n+1)$.\\

To see that $Sp(1)$ acts by right multiplication consider the matrices 
\begin{align*} 
X_i:=A_1 A_2=\left( \begin{array}{cccc}
0 & -1 & 0 & 0\\ 1 & 0 & 0 & 0 \\ 0 & 0 & 0 & 1 \\ 0 & 0 & -1 & 0 
\end{array} \right)&, \qquad X_j:= A_1 A_0=\left( \begin{array}{cccc}
0 & 0 & -1 & 0\\ 0 & 0 & 0 & -1 \\ 1 & 0 & 0 & 0 \\ 0 & 1 & 0 & 0 
\end{array} \right),\\ X_k:= A_2 A_0=&\left( \begin{array}{cccc}
0 & 0 & 0 & -1\\ 0 & 0 & 1 & 0 \\ 0 & -1 & 0 & 0 \\ 1 & 0 & 0 & 0 
\end{array} \right).
\end{align*}
These matrices correspond to right multiplication by $i,j$, and $k$, respectively under the standard identification $\mathbb{H}=\mathbb{R}^4$ and they span a Lie subalgebra of $\mathfrak{so}(4)$ which is isomorphic to $\mathfrak{so}(3)$. Furthermore, using Taylor expansions it is straightforward to see that the following identities hold:
\begin{align*}
\exp (tX_i )& =\cos (t) I_{4} + \sin (t) X_i \\
\exp (tX_j )& =\cos (t) I_{4} + \sin (t) X_j \\
\exp (tX_k )& =\cos (t) I_{4} + \sin (t) X_k. \\
\end{align*}
These identities remain valid if we replace each $A_i$ above with the block diagonal matrix $P_i$. Furthermore, the matrices on the right hand sides can be factored as products of two elements on the Clifford sphere $\Sigma (P_0,P_1,P_2)$, which readily shows the $Sp(1)$ invariance. 

\end{proof}

We come back now to the computations.

3) As proved in the previous theorem the polynomial 

$$F(x)= \| x \|^4-2 \left( \left( \| X \|^2 - \| Y \|^2  \right)^2 +4 \langle X, Y \rangle^2 +4 \langle X^\perp , Y \rangle^2 \right),$$

\noindent
on  $\re^{4n+4}$ is invariant under the action of $Sp(1)$ and therefore projects to an isoparametric function $f$ on ${\bf HP}^n$ which is invariant under $U(n+1)$. Since $F$ has degree 4, the $f$-invariant eigenvalues are $\lambda_{4i}=4i(4i+4n+2)$. The singular orbits for the $U(n+1)$ action are ${\bf CP}^n $ and the homogeneous space $U(n+1)/U(n-1)\times SU(2)$. These have dimensions $2n$ and $4n-3$, respectively. Thus,  $p_f =\frac{n+1}{n-1}>p_{4n}$.

4) Finally, we compute the results for the Ozeki-Takeuchi polynomial defined above. Recall that this is a polynomial $F:\mathbb{H}^{n+1}\times \mathbb{H}^{n+1}\rightarrow \mathbb{R}$ that determines non-homogeneous isoparametric hypersurfaces with four distinct principal curvatures on the sphere $\mathbb{S}^{4(2n+1)+3}$. It is well known that the multiplicities of the principal curvatures of the level sets of an isoparametric function (denoted by $m_i$) further satisfy that $m_1=m_3$ and $m_2=m_4$. In the case of the Ozeki-Takeuchi polynomial  we have that $m_1=3$ and $m_2=4n$. Finally, there exist two focal submanifolds which have codimensions $m_1+1$ and $m_2+1$, respectively. The OT-polynomial is readily seen to be invariant under right multiplication by $Sp(1)$, so it defines a non-homogeneous isoparametric function $f$ on ${\bf HP}^{2n+1}$. In light of the above, the focal submanifolds of this function have dimensions $8n$ and $4n+3$, respectively (and hence $d=4n+3$). Putting everything together we get that the $f$-invariant eigenvalues of $-\Delta_{h_{FS}}$ are $\lambda_{4i} = 4i(4i+4n+2)$ and $p_f=\frac{4n+3}{4n-1}>p_{4(2n+1)}$.

\section{Appendix: Homogeneous metrics on spheres}

We briefly recall some of the properties of homogeneous metrics on spheres and their quotients. These were first studied in \cite{jensen1973,ziller1982}, where the authors find various examples of non-trivial Einstein metrics of this type. These homogeneous metrics can be understood in two different ways. The first one is by looking at groups that act transitively on spheres; homogeneous metrics correspond to decompositions of the isotropy representation into irreducible subrepresentations. The other interpretation is to consider the various Hopf fibrations. Homogeneous metrics arise from viewing spheres of particular dimensions as sphere bundles over certain bases and rescaling the round metric along directions tangent to the Hopf fibers. In what follows we review the algebraic interpretation first and then describe the corresponding Hopf fibration.\\

Groups that act transitively on spheres have been classified in \cite{MS1943,borel1949,borel1950}.  In the cases where the isotropy group acts irreducibly, the resulting homogeneous metrics turn out to be multiples of the round metric, and have already been discussed. The only homogeneous metrics that do not have constant sectional curvature correspond to those where the isotropy representation splits into further irreducible subrepresentations. We will focus on those cases in this section. The homogeneous metrics with reducible isotropy turn out to be invariant under $U(n+1)$, $Sp(n+1)$, or $Spin(9)$ (although their isometry group may be strictly larger). \\

Consider a compact Lie group $G$ acting on a sphere of dimension $N$ with isotropy $H$ so that $\mathbb{S}^{N}=G/H$. The Lie algebra $\mathfrak{g}$ of $G$ has an $\Ad(H)$-invariant decomposition 
$$ \mathfrak{g}=\mathfrak{h} \oplus \mathfrak{p},$$
where $\mathfrak{h}$ is the Lie algebra of $H$ and $\mathfrak{p}$ is a complement of $\mathfrak{h}$. The isotropy group $H$ then acts on $\mathfrak{p}$ by the restriction of the adjoint represenation $\Ad : H \rightarrow GL(\mathfrak{p})$; this corresponds to the isotropy represenation under the identification $\mathfrak{p}\cong T_{[gH]}G/H $ for $g \in G$ (where we identify vectors in $\mathfrak{p}$ with the Killing fields they generate). If we fix a background bi-invariant metric $Q$ on $\mathfrak{g}$, we can further assume that both summands above are $Q$-orthogonal. It is easy to see that $G$-invariant metrics on $\mathbb{S}^{N}$ can be identified with $\Ad (H)$-invariant inner products on $\mathfrak{p}$. The homogeneous metrics on $\mathbb{S}^N$ arise from further decomposing $\mathfrak{p}$ into irreducible subrepresentations. We proceed to study these subrepresentations for the groups mentioned above. 

\subsection{$U(n+1)$-invariant metrics}
The group $G=U(n+1)$ acts transitively on the sphere $\mathbb{S}^{2n+1}$ with isotropy $H=U(n)$. The isotropy representation then splits into two irreducible subrepresentations on $\mathfrak{p}=\mathfrak{p}_0 \oplus \mathfrak{p}_1$, where $\dim \mathfrak{p}_0=1$ and $ \dim \mathfrak{p}_1=2n$. Up to rescaling, any $U(n+1)$-invariant metric is of the form 
$$ g_x= x\; \sigma^2 \oplus  Q\vert_{\mathfrak{p}_1},$$
where $\sigma$ is the standard one-form on $\mathfrak{p}_0 \cong \mathbb{R}$ and $x >0$. We furthermore take an orthogonal basis $X,Y_1,\ldots , Y_n$ of $\mathfrak{p}$ where $X \in \mathfrak{p}_0$, $\| X \|_{g_x}^2=x$, and $(Y_\alpha)$ is an orthonormal basis of $\mathfrak{p}_1\cong \mathbb{C}^{n}$. The corresponding sectional curvatures are then given by
$$ K(X, Y_\alpha )= K(X,i Y_\alpha)= x  $$
$$ K(Y_\alpha, Y_\beta ) = K(Y_\alpha, iY_{\beta})=1 \; \textnormal{ for } \alpha \neq \beta$$
$$  K(Y_\alpha, Y_{\alpha i }) = 4-3x. $$
The scalar curvature of $g_x$ is 
$$ s_{g_x} = 4n^2 + 4n-2nx.$$

These metrics correspond to rescaling the metric by a factor $x$ in the direction tangent to the fiber  in the complex Hopf fibration 

$$\mathbb{S}^1 \rightarrow \mathbb{S}^{2n+1}\rightarrow \bf{CP}^n .$$

\begin{remark}
The metrics $\texttt{g}_t$ in \cite{BettiolPiccione} are obtained by setting $x=t^2$. These metrics exhibit geometric collapse as $t\rightarrow 0$, that is, they converge with bounded curvature in the Gromov-Hausdorff sense to a lower dimensional manifold.
\end{remark}
\subsection{$Sp(n+1)$-invariant metrics} 
The group $G=Sp(n+1)$ acts transitively on the sphere $\mathbb{S}^{4n+3}$ with isotropy $H=Sp(n)$. The resulting isotropy representation can be further decomposed into two summands $\mathfrak{p}=\mathfrak{p}_0\oplus \mathfrak{p}_1$ where $\dim \mathfrak{p}_0=3$, $\dim \mathfrak{p}_1=4n$, and such that $H$ acts trivially on $\mathfrak{p}_0$ while $H$ acts irreducibly on $\mathfrak{p}_1$. Thus, we have that $\mathfrak{p}_0 \cong \mathbb{R}^3$ and $\mathfrak{p}_1 \cong \mathbb{H}^n$ as vector spaces. This splitting of $\mathfrak{p}$ corresponds naturally to the decomposition of $T_{[gH]} G/H $ into subspaces that are tangent or normal to the Hopf fiber on the corresponding Hopf fibration. This time the fibers have dimension three, so a $G$-invariant metric $g_x$ on $\mathbb{S}^{4n+3}$ depends on the parameters $x=(x_1,x_2,x_3)$ and is given by
$$ g_x = x_1 (\sigma^1)^2 \oplus x_2 (\sigma^2)^2 \oplus x_3 (\sigma^3)^2 \oplus  Q\vert_{\mathfrak{p}_1},$$
where $\sigma^i$ denotes the standard orthonormal coframe on $ \mathfrak{p}_0 \cong \mathbb{R}^3$ and $x_i>0$. It is worth pointing out that if $Q_0:=\Sigma x_i (\sigma^i)^2$ is a multiple of the Euclidean metric  (i.e. $x_1=x_2=x_3$) then $g_x$ actually has isometry group $Sp(n+1) \times Sp(1)$. The one parameter family of metrics $\texttt{h}_t$ in \cite{BettiolPiccione} are of the this type and are obtained by taking $x_i=t^2$. Equivalently, this metric can be thought of as multiplying along the Hopf fiber by $t^2$ on the fibration 

$$\mathbb{S}^3 \rightarrow \mathbb{S}^{4n+3} \rightarrow {\bf HP}^n .$$

Analogously, if $x_2=x_3$ the resulting metric $g_x$ is $Sp(n+1) \times U(1)$-invariant.\\

In order to describe the curvature of these homogeneous metrics we take an orthogonal basis $X_i$, $1 \leq i \leq 3$ of $\mathfrak{p}_0$ such that $\| X_i \|_{g_x}^2 =x_i$, and an orthonormal basis $Y_\alpha$, $1 \leq \alpha \leq n$, of $\mathfrak{p}_1$. We adopt the notation in \cite{ziller1982} and set $Y_{\alpha 1}=iY_{\alpha}$, $Y_{\alpha 2}=jY_{\alpha}$, and $Y_{\alpha 3}=kY_{\alpha}$.  Also note that $\bar{g}(X_i,Y_\alpha)=0$ since the summands in the splitting $\mathfrak{p}=\mathfrak{p}_0 \oplus \mathfrak{p}_1$ are orthogonal. A standard computation shows that the sectional curvatures (assuming $i,j,k$ is a cyclic permutation of $1,2,3$) of $(\mathbb{S}^{4n+3},\bar{g})$ are given by
$$ K(X_i, X_j )= \frac{\left( -3x_k^2+2x_i x_k + 2 x_j x_k +(x_j - x_i)^2 \right)}{x_1 x_2 x_3}  $$
$$ K(X_i, Y_\alpha ) = x_i$$
$$ K(Y_\alpha, Y_\beta) = K(Y_\alpha, Y_{\beta i}) = 1 \; \textnormal{ for } \alpha \neq \beta$$
$$  K(Y_\alpha, Y_{\alpha i }) = 4-3x_i. $$
From this it is easy to see that the scalar curvature of $\bar{g}$ is given by
$$ s_{g_x}= \frac{2}{x_1 x_2 x_3}\left(x_1^2 +x_2^2 +x_3^2-(x_2-x_3)^2-(x_3-x_1)^2-(x_1-x_2)^2\right)-4n(x_1+x_2+x_3)+16n^2+32n.$$
As we said before, setting $x_i=t^2$ yields the metric $\texttt{h}_t$. Its scalar curvature is
$$ s_{\texttt{h}_t}=\frac{6}{t^2}-12nt^2+16n^2+32n. $$
\begin{remark}
Note that some sectional curvatures blow up to infinity as $t\rightarrow 0$. This contrasts with the $U(n+1)$ invariant metrics above which exhibit geometric collapse as $t\rightarrow 0$, that is, contracting the metric along the Hopf fiber collapses $\mathbb{S}^{2n+1}$ to ${\bf CP}^{n}$ with bounded curvature.
\end{remark}

\subsection{$Spin(9)$-invariant metrics}
Finally, the group $Spin(9)$ acts on transitively on the sphere $\mathbb{S}^{15}$ with isotropy $Spin(7)$. We get an orthogonal decomposition $\mathfrak{g}=\mathfrak{h}\oplus \mathfrak{p}$, where $\mathfrak{g}=\mathfrak{so}(9)$ and $\mathfrak{h}=\mathfrak{so}(7)$. The isotropy representation splits into two irreducible subrepresentations on $\mathfrak{p}=\mathfrak{p}_1 \oplus \mathfrak{p}_2$, where $\dim \mathfrak{p}_1=7$ and $\dim \mathfrak{p}_2=8$. Up to rescaling, $Spin(9)$-invariant metrics on $\mathbb{S}^{15}$ are of the form
$$ g_x=x\, Q\vert_{\mathfrak{p}_1}\oplus Q\vert_{\mathfrak{p}_2}.$$
As before, we set take an orthogonal basis $X_i$ of $\mathfrak{p}_1$ with $\| X_i \|_{\bar{g}}^2=x$ and $Y_\alpha $ an orthonormal basis of $\mathfrak{p}_2$, the sectional curvatures of $\bar{g}$ are given by
$$ K(X_i,X_j)= \frac{1}{x},$$
$$ K(Y_\alpha, Y_\beta)= x,$$
$$ K(X_i, Y_\alpha)= 4-3x.$$
The scalar curvature is then given by
$$ s_{g_x}= \frac{42}{x}-56x+224.$$
The corresponding Hopf fibration for these metrics is $\mathbb{S}^7 \rightarrow \mathbb{S}^{15} \rightarrow \mathbb{S}^8$.

\end{document}